\definecolor{foge}{rgb}{0.1, 0.6, 0.1}
\numberwithin{equation}{section}
\numberwithin{figure}{section}
\newtheorem{theorem}{Theorem}[section]
\newtheorem{cor}[theorem]{Corollary}
\newtheorem{fig}[figure]{Figure}
\newtheorem{lem}[theorem]{Lemma}
\newtheorem{prop}[theorem]{Proposition}
\newtheorem{rem}[theorem]{Remark}
\newtheorem{ex}[theorem]{Example}
\newtheorem{exs}[theorem]{Examples}
\theoremstyle{definition} \newtheorem{defn}[theorem]{Definition}
\newcommand{\m}{\medbreak}
\newcommand{\bi}{\bigbreak}
\DeclareMathOperator{\inter}{int}
\newcommand{\Thm}[1]{Theorem \ref{#1}}
\theoremstyle{remark}
\newtheorem*{remark}{Remark}
\theoremstyle{definition}
\newtheorem{definition}[theorem]{Definition}
\newcommand{\g}{\mathfrak g}
\newcommand{\h}{\mathfrak h}
\newcommand{\sv}{\sigma^{\vee}}
\newcommand{\B}{\mathcal{B}}
\newcommand{\Bb}{\mathbb{B}}
\newcommand{\C}{\mathcal{C}}
\newcommand{\vv}{v^\vee}
\newcommand{\I}{\{0, \dots , n-1\}}
\newcommand{\Pp}{\mathcal{P}}
\newcommand{\Ppp}{\Pp^{\gg}_{\co}}
\newcommand{\Pppp}{\Pp^{\gtrdot}_{\co}}
\newcommand{\co}{c_{g}}
 \newcommand{\iso} {\buildrel \sim \over \rightarrow}
\newcommand{\ot}{\otimes}
\newcommand{\ov}{\overline}
\newcommand{\p}{\mathfrak p}
\newcommand{\N}{\mathbb{N}}
\newcommand{\Z}{\mathbb {Z}}
\newcommand{\wt}{\overline{\rm wt}}
\newcommand{\Ll}{\lambda}
\newcommand{\ssss}{\{0,\ldots,s-1\}}
\title[Perfect $A_{n-1}^{(1)}$ crystals and characters]{Generalisations of Capparelli's and Primc's identities, II: perfect $A_{n-1}^{(1)}$ crystals and explicit character formulas}
\author{Jehanne Dousse}
\address{Univ Lyon, CNRS, Universit\'e Claude Bernard Lyon 1, UMR5208, Institut Camille Jordan, F-69622 Villeurbanne, France}
\email{dousse@math.cnrs.fr}
\author{Isaac Konan}
\address{IRIF, Universit\'e de Paris, Bâtiment Sophie Germain, Case courrier 7014, 8 Place Aurélie Nemours, 75205 Paris Cedex 13, France}
\email{konan@irif.fr}
\begin{document}

\begin{abstract}
In the first paper of this series, we gave infinite families of coloured partition identities which generalise Primc's and Capparelli's classical identities. 

In this second paper, we study the representation theoretic consequences of our combinatorial results.
First, we show that the difference conditions we defined in our $n^2$-coloured generalisation of Primc's identity, which have a very simple expression, are actually the energy function with values in $\{0,1,2\}$ for the perfect crystal of the tensor product of the vector representation and its dual in $A_{n-1}^{(1)}$.

Then we introduce a new type of partitions, grounded partitions, which allows us to retrieve connections between character formulas and partition generating functions without having to perform a specialisation.

Finally, using the formulas for the generating functions of our generalised partitions, we recover the Kac-Peterson character formula for the characters of all the irreducible highest weight $A_{n-1}^{(1)}$-modules of level $1$, and give a new character formula as a sum of infinite products with obviously positive coefficients in the generators $e^{- \alpha_i} \ (i \in \{0, \dots , n-1\}),$ where the $\alpha_i$'s are the simple roots.
\end{abstract}

\maketitle

\section{Introduction and statement of results}
\subsection{Background}
A \emph{partition} $\lambda$ of a positive integer $n$ is a non-increasing sequence of natural numbers $(\lambda_1,\dots,\lambda_s)$ whose sum is $n$, written as the sum $\lambda_1+\cdots +\lambda_s$. The numbers $\lambda_1,\dots,\lambda_s$ are called the \emph{parts} of $\lambda$, and $|\lambda|=n$ is the \emph{weight} of $\lambda$. For example, the partitions of $4$ are $4, 3+1, 2+2, 2+1+1,$ and $ 1+1+1+1.$

The Rogers-Ramanujan identities \cite{RogersRamanujan} state that for $a=0$ or $1$, the number of partitions of $n$ such that the difference between two consecutive parts is at least $2$ and the part $1$ appears at most $1-a$ times is equal to the number of partitions of $n$ into parts congruent to $\pm (1+a) \mod 5.$
In the 1980's, Lepowsky and Wilson \cite{Lepowsky,Lepowsky2} gave an interpretation and proof of these identities in terms of characters for level $3$ standard modules of the affine Lie algebra $A_1^{(1)}$ by using vertex operators.
Since then, a very fruitful interaction between partition identities and representation theory has been developed, see for example \cite{Capparelli,Meurman,Meurman2,Meurman3,Nandi,Primc1,PrimcSikic,Siladic}. More detail on the history of this field can be found in the first paper of this series \cite{DK19}.

\medskip
In the present paper, we focus on the interaction between partition identities and crystal base theory. Crystal bases were introduced independently by Kashiwara \cite{Kas90} and Lusztig \cite{Lusz90} to study representations of quantum algebras, which are $q$-deformations of universal enveloping algebras of classical Lie algebras. They have a nice combinatorial structure, and admit particularly simple tensor products.

One of the most important questions in representation theory is finding nice explicit formulas for characters of representations.
If $\hat{\mathfrak{g}}$ is an affine Lie algebra, and $V$ an irreducible module of $\hat{\mathfrak{g}}$ with highest weight $\Lambda$, then by definition, the character $\mathrm{ch}(V)$ of $V$ multiplied by $e^{-\Lambda}$ can be expressed as a power series in $e^{-\alpha_0}, \dots, e^{-\alpha_{n-1}}$ with positive coefficients, where $\alpha_0, \dots , \alpha_{n-1}$ are the simple roots of $\hat{\mathfrak{g}}$. However, finding explicit expressions for characters is not easy. The most famous example, the \textit{Weyl-Kac character formula} \cite{Kac}, gives a beautiful factorized expression for the character, but the coefficients of the monomials in $e^{-\alpha_i}$ in this expression are not obviously positive.

Kang, Kashiwara, Misra, Miwa, Nakashima, and Nakayashiki \cite{KMN2a,KMN2b} introduced the theory of perfect crystals to find such nice expressions for characters via the so-called \textit{(KMN)$^2$ crystal base character formula}.
It allows one to construct crystals of irreducible highest weight modules for all classical weights of the same level. 
Then the crystal base character formula allows one to identify these perfect crystals with partitions satisfying certain difference conditions, which in certain cases gives rise to character formulas as partition generating functions. However, these formulas are in general obtained after doing a specialisation, for example replacing all the $e^{-\alpha_i}$'s by $q$ (which is the principal specialisation). In this paper, we will prove a non-specialised character formula, with obviously positive coefficients, for all the irreducible highest weight $A_{n-1}^{(1)}$-modules of level $1$.

\medskip
But first, let us present our starting point, Primc's partition identity (again, more detail can be found in our first paper \cite{DK19}). 
In \cite{Primc}, Primc used the (KMN)$^2$ crystal base character formula to study level $1$ standard modules of $A_1^{(1)}$ and  $A_2^{(1)}$. He computed an energy function for the perfect crystal of the tensor product of the vector representation and its dual in $A_{1}^{(1)}$ and  $A_2^{(1)}$, and through the crystal base character formula, he gave
the principal specialisation of the character formula in terms of partitions with difference conditions.

In the $A_1^{(1)}$ case, the energy matrix of the perfect crystal associated to the tensor product of the vector representation and its dual is the following:
\begin{equation} \label{Primcmatrix4}
P_2=\bordermatrix{\text{} & a_1b_0 & a_0b_0 & a_1b_1 & a_0b_1 \cr a_1b_0 & 2&1&2&2 \cr a_0b_0 &1&0&1&1 \cr a_1b_1 &0&1&0&2 \cr a_0b_1&0&1&0&2},
\end{equation}
and in $A_2^{(1)}$, the energy matrix is given by:
\begin{equation} \label{Primcmatrix9}
P_3=\bordermatrix{\text{} & a_2b_0 & a_2b_1 & a_1b_0 & a_0b_0 & a_2b_2 & a_1b_1 & a_0b_1 & a_1b_2 & a_0b_2 \cr a_2b_0 & 2&2&2&1&2&2&2&2&2 \cr a_2b_1 &1&2&1&1&2&1&2&2&2 \cr a_1b_0 &1&1&2&1&1&2&2&2&2 \cr a_0b_0 & 1&1&1&0&1&1&1&1&1 \cr a_2b_2 &0&0&1&1&0&1&1&2&2 \cr a_1b_1 &0&1&0&1&1&0&2&1&2 \cr a_0b_1 &0&1&0&1&1&0&2&1&2 \cr a_1b_2 &0&0&1&1&0&1&1&2&2 \cr a_0b_2 &0&0&0&1&0&0&1&1&2}.
\end{equation}

Consider coloured partitions satisfying the difference conditions of \eqref{Primcmatrix4} (resp. \eqref{Primcmatrix9}), where the coefficient $(i,j)$ in the matrix gives the minimal difference between consecutive parts coloured $i$ and $j$. Using the Weyl-Kac charater formula \cite{Kac}, Primc proved that in both cases, when performing the principal specialisation (corresponding to some dilations on the variables in the generating function), the generating function for such partitions reduces to $\frac{1}{(q;q)_{\infty}}$, which is simply the generating function for partitions. Here we used, for $n \in \N \cup \{\infty\},$ the standard $q$-series notation 
$$(a;q)_n := (1-a)(1-aq)\cdots(1-aq^{n-1}).$$

In the first paper of this series \cite{DK19}, we gave a large family of coloured partition identities which generalise and refine Primc's identities. To do so, we gave difference conditions which generalise both \eqref{Primcmatrix4} and \eqref{Primcmatrix9}.
Let $(a_{n})_{n\in\N}$ and $(b_{n})_{n\in\N}$ be two sequences of colour symbols.  For all $i,k,i',k' \in \N$, we defined the minimal difference $\Delta$ in the following way:
\begin{equation}
\label{eq:Delta}
\Delta(a_i b_k, a_{i'} b_{k'}) = \chi(i \geq i') - \chi(i=k=i')+\chi(k \leq k') - \chi(k=i'=k'),
\end{equation}
where $\chi(prop)$ equals $1$ if the proposition $prop$ is true and $0$ otherwise.

Restricting $\Delta$ to colours $a_ib_j$ for $i,j \in \{0,1\}$ gives \eqref{Primcmatrix4}, and restricting it to colours $a_ib_j$ for $i,j \in \{0,1,2\}$ gives \eqref{Primcmatrix9}.

Our general theorem in \cite{DK19} gives the generating function for partitions
 $\lambda_1 + \cdots + \lambda_s$ into parts coloured $a_ib_j$ for all $i,j \in \{0, \dots, n-1\}$, satisfying the difference conditions 
$$\lambda_j - \lambda_{j+1} \geq \Delta(c(\lambda_j),c(\lambda_{j+1})),$$
where for all $j$, $c(\lambda_j)$ denotes the colour of the part $\lambda_j$. Such partitions are called generalised Primc partitions and their set is denoted by $\mathcal{P}_n$.
Let $P_{n}(m;u_0,\dots,u_{n-1};v_0,\dots,v_{n-1})$ be the number of partitions of $m$ in $\mathcal{P}_n$, such that for $i \in \{0, \dots, n-1\},$ the symbol $a_i$ (resp. $b_i$) appears $u_i$ (resp. $v_i$) times in the colour sequence.
Defining the generating function
$$G^P_{n}(q;b_0,\cdots,b_{n-1})
:=\sum_{m,u_0,\dots,u_{n-1},v_0,\dots,v_{n-1} \geq 0} P_{n}(n;u_0,\dots,u_{n-1};v_0,\dots,v_{n-1})q^m b_0^{v_0-u_0} \cdots b_{n-1}^{v_{n-1}-u_{n-1}},$$
we showed the following.

\begin{theorem}\cite[Theorem 1.27]{DK19}
\label{theorem:main2}
Let $n$ be a positive integer. We have:
\begin{align}
G^P_{n}(q;b_0,\cdots,b_{n-1})&= [x^0]\prod_{i=0}^{n-1}(-b^{-1}_ix q;q)_{\infty}(-b_ix^{-1};q)_{\infty}\nonumber
\\&=\frac{1}{(q;q)_{\infty}^{n}}\sum_{\substack{s_1, \dots, s_{n-1}\in \Z\\s_n=0}} b_0^{s_1}\prod_{i=1}^{n-1} b_i^{-s_i+s_{i+1}}q^{s_i(s_i-s_{i+1})} \label{eq:jacob}
\\&=\frac{1}{(q;q)_{\infty}}\left(\prod_{i=1}^{n-1}\frac{\left(q^{i(i+1)};q^{i(i+1)}\right)_{\infty}}{(q;q)_{\infty}}\right)  \sum_{\substack{r_1, \dots, r_{n-1}:\\0 \leq r_j \leq j-1\\r_n=0}} \prod_{i=1}^{n-1} b_i^{-r_i+r_{i+1}}q^{r_i(r_i-r_{i+1})}\nonumber
\\& \qquad \qquad \qquad \times \left(- \left(\prod_{\ell=0}^{i-1} b_{\ell}b_i^{-1} \right) q^{\frac{i(i+1)}{2}+(i+1)r_i-ir_{i+1}};q^{i(i+1)}\right)_{\infty} \label{eq:formulefinale}
\\& \qquad \qquad \qquad \times \left(- \left(\prod_{\ell=0}^{i-1} b_{i}b_{\ell}^{-1} \right) q^{\frac{i(i+1)}{2}-(i+1)r_i+ir_{i+1}};q^{i(i+1)}\right)_{\infty}\,\cdot\nonumber
\end{align}
\end{theorem}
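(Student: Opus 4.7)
The statement bundles three equalities—one combinatorial, two $q$-series—which I would establish in sequence. For the first, $G^P_n = [x^0]\prod_{i=0}^{n-1}(-b_i^{-1}xq;q)_\infty (-b_ix^{-1};q)_\infty$, observe by Euler that the product on the right enumerates $n$-tuples of pairs of partitions into distinct parts $((\mu^{(i)}, \nu^{(i)}))_{i=0}^{n-1}$: parts of $\mu^{(i)}$ are distinct positive integers weighted $xb_i^{-1}q^{(\cdot)}$, and parts of $\nu^{(i)}$ are distinct non-negative integers weighted $x^{-1}b_iq^{(\cdot)}$. Extracting $[x^0]$ imposes that the $a$- and $b$-sides have the same total length. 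I would construct a Frobenius-like bijection from $\mathcal{P}_n$ to such balanced tuples: each part $\lambda_j$ coloured $a_{i_j}b_{k_j}$ is split into an $a$-contribution added to $\mu^{(i_j)}$ and a $b$-contribution added to $\nu^{(k_j)}$, the two contributions summing to $\lambda_j$. The crux is to check that the four-term formula \eqref{eq:Delta} for $\Delta$ is equivalent to the strict-inequality condition on each $\mu^{(i)}$ and each $\nu^{(j)}$.

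For the second equality \eqref{eq:jacob}, apply Jacobi's triple product
$$(-zq;q)_\infty (-z^{-1};q)_\infty = \frac{1}{(q;q)_\infty}\sum_{k\in\Z} z^k q^{k(k+1)/2}$$
with $z = b_i^{-1}x$ to each of the $n$ factors, turning the product into an $n$-fold sum indexed by $(k_0, \ldots, k_{n-1}) \in \Z^n$, and extract $[x^0]$, which enforces $\sum_i k_i = 0$. Reparameterising $s_i := k_i + k_{i+1} + \cdots + k_{n-1}$ for $1 \leq i \leq n$ yields $s_n = 0$, $k_0 = -s_1$, and $k_i = s_i - s_{i+1}$ otherwise. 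The linear part $\tfrac{1}{2}\sum k_i$ of the exponent vanishes, and $\tfrac{1}{2}\sum k_i^2$ telescopes to $\sum_{i=1}^{n-1} s_i(s_i - s_{i+1})$, producing \eqref{eq:jacob}.

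For the third equality \eqref{eq:formulefinale}, I would process the multiple sum of \eqref{eq:jacob} iteratively. For each $i$ from $1$ to $n-1$, decompose $s_i$ into a residue $r_i \in \{0, \ldots, i-1\}$ and an integer ``$t_i$-part'' to be summed out by a rescaled Jacobi triple product in base $q^{i(i+1)}$. Each such resummation produces the paired infinite products with exponents $q^{i(i+1)/2 \pm (i+1)r_i \mp i r_{i+1}}$ and the factor $(q^{i(i+1)}; q^{i(i+1)})_\infty$ appearing in \eqref{eq:formulefinale}, while the residues $r_i$ survive in the outer finite sum; the quadratic residuals left behind from each decomposition accumulate to $\sum_i r_i(r_i - r_{i+1})$ in the outer $q$-exponent.

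The first step is the main obstacle: the bijection must honour the non-additive corrections $-\chi(i=k=i')$ and $-\chi(k=i'=k')$ in \eqref{eq:Delta}, which obstruct any naive decoupling of the $a$- and $b$-sides and force a delicate treatment of boundary cases in the Frobenius decomposition. By comparison, the $q$-series manipulations in the second and third steps, though intricate, are mechanical once the correct substitutions are chosen.
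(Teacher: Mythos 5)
This theorem is imported verbatim from the first paper of the series (cited as Theorem 1.27 of \cite{DK19}); the present paper contains no proof of it, so your proposal can only be measured against the argument of \cite{DK19}, whose structure is partially visible in Section \ref{sec:proofchar} here. Your second step is correct and complete: applying $(q;q)_{\infty}(-zq;q)_{\infty}(-z^{-1};q)_{\infty}=\sum_{k\in\Z}z^kq^{k(k+1)/2}$ factorwise, extracting $[x^0]$ to force $\sum_i k_i=0$, and substituting $s_i=k_i+\cdots+k_{n-1}$ does reduce the linear term to zero and telescopes the quadratic term to $\sum_{i=1}^{n-1}s_i(s_i-s_{i+1})$, giving \eqref{eq:jacob}. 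Your third step is the right strategy (write $s_i=r_i+it_i$ and resum by triple product in base $q^{i(i+1)}$), but it is asserted rather than executed: the cross terms $s_is_{i+1}$ couple consecutive indices, so the resummations cannot be performed independently and must be carried out sequentially with careful realignment of the resulting infinite products; the claim that the residuals "accumulate to $\sum_i r_i(r_i-r_{i+1})$" is precisely the computation that needs doing.

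The genuine gap is in the first equality, which is the combinatorial core of the theorem. Your plan is to split each part $\lambda_j$ of colour $a_{i_j}b_{k_j}$ into an $a$-half and a $b$-half and to "check that the four-term formula \eqref{eq:Delta} for $\Delta$ is equivalent to the strict-inequality condition on each $\mu^{(i)}$ and each $\nu^{(j)}$." No such equivalence holds. The correction terms $-\chi(i=k=i')$ and $-\chi(k=i'=k')$ lower the minimal gap by one exactly when a free colour $a_ib_i$ abuts a bound colour sharing an index, so the Primc conditions $\Delta$ differ from the decoupled Frobenius conditions $\Delta'(a_ib_j,a_{i'}b_{j'})=\chi(i\geq i')+\chi(j\leq j')$ at precisely those junctions, and a weight-preserving part-by-part splitting cannot respect both systems simultaneously. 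The argument of \cite{DK19} (echoed in the proof of Theorem \ref{theorem:finalcharacter} above) is not a direct bijection of this kind but an equinumerosity argument through a common invariant: both generalised Primc partitions and $n^2$-coloured Frobenius partitions are stratified by their kernel (the reduced sequence of bound colours), free colours are inserted into a kernel with types in $\{0,1\}$ computed respectively from $\Delta$ and from $\Delta''=2-\Delta'$, and the key identity $T_{\Delta}(a_kb_{k'})+T_{\Delta''}(a_kb_{k'})=1$ shows the two generating functions over each fixed kernel coincide. You correctly identify the obstacle but supply no mechanism to overcome it, so the first step as written would fail.
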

We can obtain a product formula for our generating function by doing the following dilations, which correspond to the principal specialisation that Primc considered in his paper:
\begin{equation}\label{eq:dilation}
 \begin{cases}
  q &\mapsto \ \ q^n\\
  b_i&\mapsto \ \ q^i \quad \text{for all }i\in\{0,\ldots,n-1\}\,\cdot
 \end{cases}
\end{equation}
\begin{cor}\cite[Corollary 1.26]{DK19}
 By doing the transformations described in \eqref{eq:dilation}, we obtain the generating function for classical integer partitions:
 \begin{align*}
 G^P_{n}(q^n;1,\cdots,q^{n-1})&= 
  [x^0]\prod_{i=0}^{n-1}(-q^{n-i}x;q^n)_{\infty}(-q^ix^{-1};q^n)_{\infty} \\
  &= [x^0](-qx;q)_{\infty}(-x^{-1};q)_{\infty} \\
  &= \frac{1}{(q;q)_\infty}\,\cdot
 \end{align*}
\end{cor}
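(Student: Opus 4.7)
The plan is to derive the corollary directly from the $[x^0]$ expression in \Thm{theorem:main2} by performing the dilations \eqref{eq:dilation}, rewriting the resulting base-$q^n$ products as ordinary $q$-products, and then extracting the constant term via the Jacobi triple product identity. The first equality is immediate: substituting $q\mapsto q^n$ and $b_i\mapsto q^i$ turns $(-b_i^{-1}xq;q)_\infty$ into $(-q^{n-i}x;q^n)_\infty$ and $(-b_ix^{-1};q)_\infty$ into $(-q^ix^{-1};q^n)_\infty$, which reproduces the first line verbatim.

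For the second equality, I would recombine the $n$ factors of each type into a single $q$-Pochhammer symbol by sorting the exponents of $q$ by their residue modulo $n$. Writing every integer $j\geq 1$ uniquely as $j=n\ell+(n-i)$ with $\ell\geq 0$ and $i\in\{0,\dots,n-1\}$ gives
$$(-qx;q)_\infty=\prod_{j\geq 1}(1+xq^j)=\prod_{i=0}^{n-1}\prod_{\ell\geq 0}\bigl(1+xq^{n\ell+n-i}\bigr)=\prod_{i=0}^{n-1}(-q^{n-i}x;q^n)_\infty,$$
and writing $j=n\ell+i$ with $\ell\geq 0$ and $i\in\{0,\dots,n-1\}$ yields analogously $(-x^{-1};q)_\infty=\prod_{i=0}^{n-1}(-q^{i}x^{-1};q^n)_\infty$, establishing the desired identification between the two product forms.

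For the last equality I would invoke the Jacobi triple product identity in the form
$$(q;q)_\infty(-qx;q)_\infty(-x^{-1};q)_\infty=\sum_{k\in\Z}x^{k}q^{k(k+1)/2},$$
so that dividing by $(q;q)_\infty$ and extracting the coefficient of $x^0$ on both sides keeps only the term $k=0$ on the right, giving $[x^0](-qx;q)_\infty(-x^{-1};q)_\infty=1/(q;q)_\infty$. I do not anticipate any serious obstacle: every step is elementary, and the only point requiring minor attention is keeping track of the simultaneous change of base from $q$ to $q^n$ and the $i\leftrightarrow n-i$ correspondence between the exponents of $x$ and $x^{-1}$, which is exactly what makes the reindexing of the products telescope cleanly into the two factors $(-qx;q)_\infty$ and $(-x^{-1};q)_\infty$.
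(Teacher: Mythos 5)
Your proposal is correct and follows essentially the same route as the paper: the corollary's displayed chain of equalities is obtained by substituting the dilations into the constant-term expression of Theorem \ref{theorem:main2}, regrouping the base-$q^n$ Pochhammer symbols by residues modulo $n$ into $(-qx;q)_\infty(-x^{-1};q)_\infty$, and extracting the constant term via the Jacobi triple product, exactly as you do. Your reindexing ($j=n\ell+(n-i)$, resp.\ $j=n\ell+i$) and the $k=0$ term of the triple product are the same computations underlying the paper's three displayed equalities, so there is nothing to add.
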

The cases $n=2$ and $n=3$ in the corollary above recover Primc's original results.

\medskip
In \cite{DK19}, we also gave a multi-parameter family of generalisations of Capparelli's identity \cite{Capparelli}, another partition identity which originally arose from representation theory via the theory of vertex operators. Let us also state this generalisation, as it gives a different (but related) expression for the character formula.

\begin{definition}
Let $\pi=\pi_1+ \dots +\pi_r$ be a partition. We say that another partition $\lambda=\lambda_1+\cdots+\lambda_s$ \textit{contains the pattern} $\pi$ if there is some index $i$ such that
$$\lambda_i=\pi_1,\quad  \lambda_{i+1} = \pi_2, \quad \dots , \quad \lambda_{i+r-1}=\pi_r.$$
If $\lambda$ does not contain the pattern $\pi$, we say that $\lambda$ \textit{avoids} $\pi$.
\end{definition}

Let us recall from \cite{DK19} some conditions that the parameters in our generalisation need to satisfy.

\begin{definition}
\label{def:cond1}
A function $\delta$ is said to satisfy Condition 1 if it is defined on the set of colours $\{a_k b_\ell : k \neq \ell\}$, has integer values, and for all $k,\ell$,
$$\min\{k,\ell\}<\delta(a_kb_\ell)\leq \max\{k,\ell\}.$$
\end{definition}

\begin{definition}
\label{def:cond2}
A function $\gamma$ is said to satisfy Condition 2 if it is defined on the set of pairs of colours $\{(a_{k_1} b_{\ell_1}, a_{k_2} b_{\ell_2}): k_1 \neq \ell_1, k_2 \neq \ell_2 \}$, has integer values, and if for all $k_1,k_2,\ell_1,\ell_2$, it satisfies the following:
\begin{itemize}
\item If $\max\{k_1,\ell_2\}<\min\{k_2,\ell_1\}$, we have 
$$\max\{k_1,\ell_2\} <\gamma(a_{k_1} b_{\ell_1}, a_{k_2} b_{\ell_2})\leq  \min\{k_2,\ell_1\}.$$
\item If $k_1>\ell_1$, $k_2>\ell_2$, and $\{\ell_2+1,\ldots,k_2\}\setminus \{\ell_1+1,\ldots,k_1\}\neq \emptyset$, we have
$$\gamma(a_{k_1} b_{\ell_1}, a_{k_2} b_{\ell_2})\in \{\ell_2+1,\ldots,k_2\}\setminus \{\ell_1+1,\ldots,k_1\}.$$
\item If $k_1<\ell_1$, $k_2<\ell_2$, and $\{k_1+1,\ldots,\ell_1\}\setminus\{k_2+1,\ldots,\ell_2\}\neq \emptyset$, we have 
$$\gamma(a_{k_1} b_{\ell_1}, a_{k_2} b_{\ell_2})\in \{k_1+1,\ldots,\ell_1\}\setminus\{k_2+1,\ldots,\ell_2\}.$$
\end{itemize}
\end{definition}

These functions now allow us to define forbidden patterns and generalised Capparelli partitions.

\begin{definition}
\label{def:Capp_conditions}
Let $n$ be a positive integer, and let $\delta$ and $\gamma$ be functions satisfying Conditions 1 and 2, respectively. We define $\mathcal{C}_n(\delta,\gamma)$, the set of generalised Capparelli partitions related to $\delta$ and $\gamma$, to be the set of partitions $\lambda$ such that
\begin{itemize}
\item $\lambda \in \mathcal P_n$,
\item $\lambda$ has no part coloured $a_0b_0$,
\item $\lambda$ does not contain any of the following patterns, where $p$ is any positive integer:
\begin{itemize}
\item for any $i \in \{1, \dots , n-1\},$
$$p_{a_ib_i}+p_{a_ib_i},$$
(i.e. free colours cannot repeat)
\item for any $k_1,k_2,\ell_1,\ell_2$ such that $\max\{k_1,\ell_2\}<\min\{k_2,\ell_1\}$ and $i=\gamma(a_{k_1}b_{\ell_1},a_{k_2}b_{\ell_2})$,
$$p_{a_{k_1}b_{\ell_1}}+p_{a_ib_i}+p_{a_{k_2}b_{\ell_2}},$$
\item for any $k_2>\ell_2$,
\begin{itemize}
\item for any $2\leq u\leq \infty$, any $k_1, \ell_1$, and $i=\delta(a_{k_2}b_{\ell_2})$,
$$(p+u)_{a_{k_1}b_{\ell_1}}+p_{a_ib_i}+p_{a_{k_2}b_{\ell_2}},$$
(here we take the convention that $u=\infty$ if the pattern $p_{a_ib_i}+p_{a_{k_2}b_{\ell_2}}$ is at the beginning of the partition)
\item for any $k_1\leq \ell_1$, and $i=\delta(a_{k_2}b_{\ell_2})$,
$$(p+1)_{a_{k_1}b_{\ell_1}}+p_{a_ib_i}+p_{a_{k_2}b_{\ell_2}},$$
\item for any $k_1>\ell_1$ such that $\{\ell_2+1,\ldots,k_2\}\setminus \{\ell_1+1,\ldots,k_1\}\neq \emptyset$, and for $i=\gamma(a_{k_1}b_{\ell_1},a_{k_2}b_{\ell_2})$,
$$(p+1)_{a_{k_1}b_{\ell_1}}+p_{a_ib_i}+p_{a_{k_2}b_{\ell_2}},$$
\end{itemize} 
\item for any $k_1<\ell_1$,
\begin{itemize}
\item for any $2\leq u\leq \infty$, any $k_2, \ell_2$, and $i=\delta(a_{k_1}b_{\ell_1})$,
$$p_{a_{k_1}b_{\ell_1}}+p_{a_ib_i}+(p-u)_{a_{k_2}b_{\ell_2}},$$
(here we take the convention that $u=\infty$ if the pattern $p_{a_{k_1}b_{\ell_1}}+p_{a_ib_i}$ is at the end of the partition)
\item for any $k_2\geq \ell_2$, and $i=\delta(a_{k_1}b_{\ell_1})$,
$$(p+1)_{a_{k_1}b_{\ell_1}}+(p+1)_{a_ib_i}+p_{a_{k_2}b_{\ell_2}},$$
\item for any $k_2<l_2$ such that $\{k_1+1,\ldots,\ell_1\}\setminus\{k_2+1,\ldots,\ell_2\}\neq \emptyset$, and for $i=\gamma(a_{k_1}b_{\ell_1},a_{k_2}b_{\ell_2})$,
$$(p+1)_{a_{k_1}b_{\ell_1}}+(p+1)_{a_ib_i}+p_{a_{k_2}b_{\ell_2}}.$$
\end{itemize} 
\end{itemize}
\end{itemize}
\end{definition}

Let $C_{n}(m;\delta,\gamma;u_0,\dots,u_{n-1};v_0,\dots,v_{n-1})$ be the number of  partitions of $m$ in $\mathcal{C}_n(\gamma,\delta)$, such that for $i \in \{0, \dots, n-1\},$ the symbol $a_i$ (resp. $b_i$) appears $u_i$ (resp. $v_i$) times in the colour sequence.
We define the generating function
$$G^C_{n}(\delta,\gamma;q;b_0,\cdots,b_{n-1})
:=\sum_{m,u_0,\dots,u_{n-1},v_0,\dots,v_{n-1} \geq 0} C_{n}(m;\delta,\gamma;u_0,\dots,u_{n-1};v_0,\dots,v_{n-1})q^m b_0^{v_0-u_0} \cdots b_{n-1}^{v_{n-1}-u_{n-1}}.$$
Through a bijection, we showed the following relation between our generalisations of Capparelli's and Primc's partitions with difference conditions.

\begin{theorem}\cite[Theorem 1.24]{DK19}
For all positive integers $n$ and all functions $\delta$ and $\gamma$  satisfying Conditions 1 and 2 respectively, we have
$$G^C_{n}(\delta,\gamma;q;b_0,\cdots,b_{n-1})= (q;q)_{\infty} G^P_{n}(q;b_0,\cdots,b_{n-1}).$$
\end{theorem}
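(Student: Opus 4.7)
My plan is to prove this identity bijectively. Since the generating function for ordinary integer partitions is $\frac{1}{(q;q)_\infty}$, and since each ``free'' colour $a_ib_i$ (as well as $a_0b_0$) contributes the trivial factor $b_i^{1-1}=1$ to the colour variables, the claim
$$G^C_n(\delta,\gamma;q;b_0,\ldots,b_{n-1}) = (q;q)_\infty \, G^P_n(q;b_0,\ldots,b_{n-1})$$
is equivalent to the existence of a weight- and colour-content-preserving bijection
$$\Phi : \mathcal{P}_n \;\iso\; \mathcal{C}_n(\delta,\gamma) \times \mathcal{D},$$
where $\mathcal{D}$ denotes the set of ordinary (uncoloured) partitions. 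I therefore focus on constructing such a $\Phi$.

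The forward map iteratively extracts from $\lambda \in \mathcal{P}_n$ the parts responsible for it failing to lie in $\mathcal{C}_n(\delta,\gamma)$. I scan $\lambda$ in a prescribed order and locate the first occurrence of one of the patterns in Definition \ref{def:Capp_conditions}. In each such pattern, the distinguished ``middle'' (or repeated) part carries either a free colour $a_ib_i$ or the colour $a_0b_0$, whose index $i$ is dictated either directly by the pattern or, when two outer parts are present, by the values $\gamma(a_{k_1}b_{\ell_1},a_{k_2}b_{\ell_2})$ or $\delta(a_{k_1}b_{\ell_1})$. I remove that distinguished part, record its size as a new element of $\nu$, and, where needed, uniformly shift the parts above it in order to keep the remaining sequence Primc-valid. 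Iterating until no forbidden pattern survives produces a pair $(\mu,\nu) \in \mathcal{C}_n(\delta,\gamma) \times \mathcal{D}$ with $|\mu|+|\nu|=|\lambda|$ and the same colour content as $\lambda$.

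The inverse $\Phi^{-1}$ reinserts the parts of $\nu$ one at a time. At each step the insertion position is determined by the size of the part to be inserted, and the functions $\delta$ and $\gamma$ (evaluated on the colours of its neighbours) determine the colour to assign. Conditions~1 and~2 are tailored so that this colour is uniquely defined, satisfies the Primc constraint $\Delta$ with both neighbours, and produces precisely the forbidden pattern that $\Phi$ would extract. This deterministic choice is what makes $\Phi^{-1} \circ \Phi = \id$ and vice versa.

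The main obstacle is the large case analysis required by Definition \ref{def:Capp_conditions}, which distinguishes patterns according to the ordering of $(k_j,\ell_j)$ and to the set-theoretic interactions between the intervals $\{\ell_j+1,\ldots,k_j\}$ and $\{k_j+1,\ldots,\ell_j\}$. For each case one must check that (i) extracting the designated part together with the appropriate shift yields a valid Primc partition with one fewer part, (ii) the reverse insertion exactly recreates the original pattern, and (iii) the order in which patterns are removed is confluent, so that $\Phi$ is well-defined. The numerical ranges imposed on $\delta$ and $\gamma$ by Conditions~1 and~2 are precisely what guarantees that in every case the chosen colour index places the extracted/inserted part in a regime where $\Delta$ is compatible with both neighbours, and that no two candidate extractions ever compete for the same position.
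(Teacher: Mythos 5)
Your overall strategy---recasting the identity as a weight- and colour-exponent-preserving bijection between $\mathcal{P}_n$ and $\mathcal{C}_n(\delta,\gamma)\times\mathcal{D}$, realised by extracting/reinserting free-coloured (and $a_0b_0$-coloured) parts, with $\delta$ and $\gamma$ dictating the colour of the inserted part and the forbidden patterns of Definition \ref{def:Capp_conditions} serving as the footprints of insertions---is the route actually taken: the present paper does not reprove the statement but cites the companion paper \cite{DK19}, where it is established by exactly such an insertion/extraction bijection, built on the kernel-and-insertion machinery that Section \ref{sec:proofchar} of this paper reuses. So you have identified the right mechanism.

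However, as written your proposal is a plan rather than a proof: every step that makes the theorem nontrivial is deferred (``one must check (i)--(iii)''). Two gaps are concrete. First, the weight bookkeeping is inconsistent as stated: you remove the distinguished part, ``record its size'' as an entry of $\nu$, and also ``uniformly shift the parts above it''; if the shift changes part sizes, then $|\mu|+|\nu|=|\lambda|$ fails unless the recorded entry absorbs the total amount of shifting, and you never specify the shift, nor show that the recorded values are positive, assemble into an ordinary partition, and can be arbitrary---surjectivity onto $\mathcal{C}_n(\delta,\gamma)\times\mathcal{D}$ requires, for instance, that a part of any prescribed size can be inserted into any Capparelli partition, including at its ends, where the $u=\infty$ conventions of Definition \ref{def:Capp_conditions} intervene. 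Second, well-definedness and invertibility rest precisely on the case analysis you postpone: that the colour index supplied by $\delta$ or $\gamma$ is the unique admissible free colour compatible with $\Delta$ on both sides, that extracting the first forbidden pattern and reinserting are mutually inverse, and that the iteration is confluent. Conditions 1 and 2 are indeed tailored to make this work, but asserting that they are ``precisely what guarantees'' it is not a proof; without the explicit insertion algorithm (positions, shifts, recorded sizes) and its verification in each case of Definition \ref{def:Capp_conditions}, the identity is not yet established.
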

Through Theorem \ref{theorem:main2}, the generating function for generalised Capparelli partitions can also be written as a sum of infinite products.

In this paper, we use our results above to give new character formulas.

\subsection{Statement of Results}
We will define all the necessary notions from crystal base theory in Section \ref{sec:reptheory}. For now, let us define a few notations which will allow us to state our main theorems.

Let $n$ be a positive integer, and consider the Cartan datum for the generalised Cartan matrix of affine type $A_{n-1}^{(1)}$.
We denote by $\bar P = \Z\Lambda_0 \oplus \cdots \oplus \Z\Lambda_{n-1}$ the lattice of the classical weights, where the elements $\Lambda_{\ell}$ $(\ell \in \{0, \dots , n-1\})$ are the 
fundamental weights. The set of all the level $1$ classical weights is given by $\bar P^+_1 = \{\Lambda_{\ell} : \ell\in \{0,\cdots, n-1\}\}$.
The null root is denoted by $\delta$, and the simple roots by $\alpha_i , \,i\in \{0,\cdots, n-1\}.$ Let $\B = \{v_i: i \in \{0,\cdots,n-1\}\}$ be the crystal of the vector representation  of $A_{n-1}^{(1)}$ and let $\B^\vee=\{\vv_i: i \in \{0,\cdots,n-1\}\}$ be its dual.
For all $v_i\in \B$, we denote by $\wt v_i\in \bar P$ the classical weight of $v_i$.
We finally set $\Bb$ to be the tensor product $\B\ot \B^\vee$.

\medskip
Given that \eqref{Primcmatrix4} and \eqref{Primcmatrix9} are energy matrices for perfect crystals coming from the tensor product of the vector representation and its dual in $A_1^{(1)}$ and $A_2^{(1)}$, respectively, it is natural to wonder whether our generalised difference conditions $\Delta$ defined in \eqref{eq:Delta} are also energy functions for certain perfect crystals. We answer this question in the affirmative by showing the following.
%

\begin{theorem}\label{theorem:perfcrys}
Let $n$ be a positive integer, and let $\B$ denote the crystal of the vector representation  of $A_{n-1}^{(1)}$.
 The crystal $\Bb=\B\ot\B^\vee$ is a perfect crystal of level 1. 
 Furthermore, the energy function
 on $\Bb\ot\Bb$ such that $H((v_0\ot\vv_0)\ot(v_0\ot\vv_0))=0$
 satisfies for all $k, \ell , k', \ell' \in \{0, \dots, n-1\},$
 \begin{equation}\label{eq:equality}
  H((v_{\ell'}\ot\vv_{k'})\ot(v_{\ell}\ot\vv_{k})) = \Delta(a_kb_{\ell};a_{k'}b_{\ell'}),
 \end{equation}
where $\Delta$ is the minimal difference for generalised Primc partitions defined in \eqref{eq:Delta}. 
\end{theorem}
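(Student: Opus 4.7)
The plan is to establish the two assertions separately: perfectness first, then the identification of the energy function with $\Delta$. For the perfectness, I would verify the KMN$^{2}$ axioms for $\mathbb{B}=\mathcal{B}\otimes\mathcal{B}^{\vee}$ directly. The classical weight of $v_{\ell}\otimes\vv_{k}$ is $(\Lambda_{\ell+1}-\Lambda_{\ell})+(\Lambda_{k}-\Lambda_{k+1})$ (indices modulo $n$), and the tensor product rule gives explicit expressions for $\varepsilon_{i}(v_{\ell}\otimes\vv_{k})$ and $\varphi_{i}(v_{\ell}\otimes\vv_{k})$ in terms of $k,\ell$. From these one reads off $\min_{b\in\mathbb{B}}\langle c,\varepsilon(b)\rangle=1$, identifies the minimal set as $\mathbb{B}_{\min}=\{v_{k}\otimes\vv_{k}:k\in\{0,\dots,n-1\}\}$, and checks that both $\varepsilon$ and $\varphi$ restrict to bijections $\mathbb{B}_{\min}\to\bar{P}^{+}_{1}=\{\Lambda_{0},\dots,\Lambda_{n-1}\}$. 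Connectedness of $\mathbb{B}\otimes\mathbb{B}$ follows from the fact that the operators $\tilde e_{i},\tilde f_{i}$ $(i\neq 0)$ permute the indices within a classical $A_{n-1}$-component, while $\tilde e_{0},\tilde f_{0}$ bridge these components cyclically.

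For the energy function, I would use its defining properties: $H$ is invariant under $\tilde e_{i},\tilde f_{i}$ for $i\neq 0$, and for $i=0$,
\begin{equation*}
H\bigl(\tilde e_{0}(x\otimes y)\bigr)=H(x\otimes y)+\begin{cases} 1 & \text{if } \varphi_{0}(x)\geq\varepsilon_{0}(y),\\ -1 & \text{if } \varphi_{0}(x)<\varepsilon_{0}(y),\end{cases}
\end{equation*}
together with the normalization $H((v_{0}\otimes\vv_{0})^{\otimes 2})=0$. Since $\mathbb{B}\otimes\mathbb{B}$ is connected, these rules determine $H$ uniquely. To prove $H=\Delta$, I would show that $\Delta$ (which also vanishes at $(a_{0}b_{0},a_{0}b_{0})$ by \eqref{eq:Delta}) satisfies the same recurrence: the value $\Delta(a_{k}b_{\ell};a_{k'}b_{\ell'})$ is preserved by any Kashiwara operator $\tilde e_{i}$ with $i\neq 0$, and jumps by $+1$ or $-1$ under $\tilde e_{0}$ according to whether it acts on the left or on the right tensor factor. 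Induction on the number of Kashiwara operators applied from the base point then yields \eqref{eq:equality}.

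The main obstacle will be the last step: verifying that the four-term formula \eqref{eq:Delta} transforms correctly under every Kashiwara operator. An operator $\tilde e_{i}$ shifts exactly one of the four indices $k,\ell,k',\ell'$ by $\pm 1$ modulo $n$, so one must carefully enumerate the cases in which each indicator $\chi(i\geq i')$, $\chi(k\leq k')$, $\chi(i=k=i')$, $\chi(k=i'=k')$ flips. Particular care is needed at the wrap-around move $i=0$ (where indices cycle between $n-1$ and $0$, producing the asymmetry between $\tilde e_{0}$-on-the-left versus $\tilde e_{0}$-on-the-right), and at the degenerate diagonal cases $k=k'$ or $\ell=\ell'$ where the subtraction terms are active. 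Once this case analysis is completed, both sides of \eqref{eq:equality} satisfy the same local recurrence starting from the same value, so they must agree everywhere on $\mathbb{B}\otimes\mathbb{B}$.
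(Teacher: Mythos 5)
Your strategy is sound but genuinely different from the paper's. For perfectness, the paper does not re-verify the axioms: it computes $\varepsilon,\varphi$ on $\Bb$ (equations \eqref{eq:pp}--\eqref{eq:lev}) and then simply invokes \cite[Lemma 4.6.2]{KMN2a}, which says that a tensor product of level-$1$ perfect crystals is again perfect of level $1$; your direct verification would work, but your one-line argument for connectedness of $\Bb\ot\Bb$ (``$\tilde e_0,\tilde f_0$ bridge the classical components cyclically'') is exactly the nontrivial point, and since your energy argument also leans on that connectedness you should either cite the lemma or prove it honestly. For the identification $H=\Delta$, the paper proceeds the other way around: it fixes the target vertex $(v_{\ell'}\ot\vv_{k'})\ot(v_{\ell}\ot\vv_{k})$, constructs an explicit path to it in $\Bb\ot\Bb$, and accumulates the $0$-arrow increments along that path, organised by a case split on $k,\ell,k',\ell'$ (Section \ref{sec:proofcrys}), with two labour-saving devices you do not have: a duality symmetry (Propositions \ref{prop.energy_dual} and \ref{prop:pathreduc}) that halves the case analysis by exchanging $(k,\ell)\leftrightarrow(\ell',k')$, and the rewriting of $\Delta$ in terms of interval membership $\chi(0\in\inter(\cdot,\cdot))$ (Lemma \ref{lem:diffint} and \eqref{eq:finfin}), which matches the bookkeeping of which arrows in a path are $0$-arrows. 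Your plan instead verifies that $\Delta$ satisfies the defining recurrence \eqref{eq:ef} on \emph{every} edge of $\Bb\ot\Bb$ and concludes by connectedness and the common normalisation; this is conceptually cleaner (no paths to design) and, if completed, proves the same statement.

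The genuine shortfall is that the decisive step is only announced, not executed: you yourself flag the edge-by-edge case analysis as ``the main obstacle'' and then assert the conclusion conditional on it. That verification is where all the content of the theorem lives --- one must determine, for each vertex and each $i$, which of the four indices moves (using the tensor rule twice, first between the two $\Bb$-factors and then inside $\B\ot\B^\vee$), and check the behaviour of the four indicator terms of \eqref{eq:Delta}, including the wrap-around at $i=0$ and the diagonal degeneracies; in bulk this is comparable to the paper's Sections \ref{sec:toolperfcrys}--\ref{sec:proofcrys}, and without an analogue of the duality reduction it is roughly twice as long. So as written this is a viable proof outline with a correct skeleton, but not yet a proof: to complete it you would need to carry out that case analysis (ideally after recasting $\Delta$ via \eqref{eq:finfin}, which makes the $\pm1$ jumps at $0$-arrows transparent) and to secure connectedness of $\Bb\ot\Bb$ by citation or argument.
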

Primc showed Theorem \ref{theorem:perfcrys} in the cases $n=2$ and $n=3$. The theorem is still true when $n=1$, in which case the crystal $\B$ has a single vertex and a loop $0$, and the partitions corresponding to its crystal are simply the classical partitions. 

In \cite{Benkart}, Benkart, Frenkel, Kang, and Lee gave another formulation of the energy function of certain level $1$ perfect crystals of classical types, including the $A_{n-1}^{(1)}$-crystal studied in Theorem \ref{theorem:perfcrys}. However, they did not give a closed expression valid for all $k, \ell , k', \ell' \in \{0, \dots, n-1\}$ as we did in Theorem \ref{theorem:perfcrys} and \eqref{eq:Delta}. They used the fact that, when removing the $0$-arrows from the crystal graph on Figure \ref{fig:crystal} (see more details on crystals and energy functions in Sections \ref{sec:reptheory} and \ref{sec:An1}), the energy function $H$ is constant on each connected component, and gave a table with the value of $H$ for a representative of each connected component. The value of $H$ for the other vertices can then be obtained by determining to which connected component they belong. Both their and our energy functions satisfy $H((v_0\ot\vv_0)\ot(v_0\ot\vv_0))=0$, so they must be the same, even though their formulations differ. In this sense, Theorem \ref{theorem:perfcrys} gives a simpler, more explicit and unified formula for the $A_{n-1}^{(1)}$ energy function in \cite{Benkart}.

\medskip
Our proof of Theorem \ref{theorem:perfcrys} in Sections \ref{sec:toolperfcrys} and \ref{sec:proofcrys} relies on explicitly building paths in the crystal graph. We only treat the case $n \geq 3$, as $n=1$ and $n=2$ give crystals with a slightly different shape, and we already know that the theorem is true in these cases.

\medskip
Theorem \ref{theorem:perfcrys} gives a simple explicit expression for the energy function. Using the (KMN)$^2$ crystal base character formula \cite{KMN2a}, it allows us to relate the generating function $  G^P_{n}(q;b_0,\cdots,b_{n-1})$ of generalised Primc partitions and the generating function $G^C_{n}(\delta,\gamma;q;b_0,\cdots,b_{n-1})$ of generalised Capparelli partitions with the character of the irreducible highest weight module $L(\Lambda_0)$.

Unlike previous connections between character formulas and partition generating functions, where a specific specialisation (often the principal specialisation) was needed, here we give a \textbf{non-dilated character formula}.

\begin{theorem}\label{theorem:finalcharac}
Let $n$ be a positive integer, and let $\Lambda_0, \dots, \Lambda_{n-1}$ be the fundamental weights of $A_{n-1}^{(1)}.$
 By setting $e^{\wt v_i} = b_i$ and $e^{-\delta} = q$, we have the following identities:
 \begin{align*}
  G^P_{n}(q;b_0,\cdots,b_{n-1}) &=  \frac{e^{-\Lambda_0}\mathrm{ch}(L(\Lambda_0))}{(q;q)_{\infty}}, \\
  G^C_{n}(\delta,\gamma;q;b_0,\cdots,b_{n-1})&= e^{-\Lambda_0}\mathrm{ch}(L(\Lambda_0)).
 \end{align*}
\end{theorem}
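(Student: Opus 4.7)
The plan is to apply the (KMN)$^2$ crystal base character formula \cite{KMN2a} to the perfect $A_{n-1}^{(1)}$-crystal $\Bb=\B\otimes \B^\vee$ of level $1$, provided by \Thm{theorem:perfcrys}. Since $\mathrm{wt}(v_0\otimes \vv_0)=0$ and $H((v_0\otimes \vv_0)\otimes(v_0\otimes \vv_0))=\Delta(a_0b_0,a_0b_0)=0$, the unique ground state $\Lambda_0$-path is the constant sequence $\bar p=(\ldots,v_0\otimes \vv_0,v_0\otimes \vv_0)$; the normalising subtractions in the formula therefore vanish, and under the assignments $q=e^{-\delta}$ and $b_i=e^{\mathrm{wt}\,v_i}$ it becomes
\[
e^{-\Lambda_0}\,\mathrm{ch}(L(\Lambda_0))\;=\;\sum_{p}\ \prod_{k\geq 1}\frac{b_{\ell_k}}{b_{m_k}}\cdot q^{\sum_{k\geq 1}k\,H(p_{k+1}\otimes p_k)},
\]
the sum being over all $\Lambda_0$-paths $p=(p_k)_{k\geq 1}$ with entries $p_k=v_{\ell_k}\otimes \vv_{m_k}$. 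By \Thm{theorem:perfcrys}, the exponent of $q$ equals $\sum_{k\geq 1}k\,\Delta(c_k,c_{k+1})$, where $c_k=a_{m_k}b_{\ell_k}$.

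The next step is to identify this sum with a partition generating function. To a $\Lambda_0$-path of support $\{1,\ldots,s\}$ (meaning $p_k=v_0\otimes\vv_0$ for $k>s$), I associate the minimal-difference \emph{grounded partition} $(\lambda_1,\ldots,\lambda_s,\lambda_{s+1}=0)$ with colour sequence $c_1,\ldots,c_s,c_{s+1}=a_0b_0$ and $\lambda_k=\sum_{j\geq k}\Delta(c_j,c_{j+1})$. A telescoping computation yields $|\lambda|=\sum_{k\geq 1}k\,\Delta(c_k,c_{k+1})$, so the path and grounded-partition contributions coincide; this realises $e^{-\Lambda_0}\mathrm{ch}(L(\Lambda_0))$ as the generating function of $a_0b_0$-grounded generalised Primc partitions, the combinatorial counterpart of $\Lambda_0$-paths that the paper will introduce precisely to avoid any specialisation.

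To close the argument, I would establish a weight-preserving bijection between $a_0b_0$-grounded generalised Primc partitions and generalised Capparelli partitions -- the intricate conditions of Definition~\ref{def:Capp_conditions}, via the auxiliary functions $\delta$ and $\gamma$ satisfying Conditions 1 and 2, being designed to encode exactly this correspondence. This yields $e^{-\Lambda_0}\mathrm{ch}(L(\Lambda_0))=G^C_n(\delta,\gamma;q;b_0,\ldots,b_{n-1})$; combining with Theorem~1.24 of \cite{DK19} (which gives $G^C_n=(q;q)_\infty\, G^P_n$) then provides the identity for $G^P_n$. The principal technical obstacle is precisely this last bijection, which demands a careful case analysis of consecutive colour transitions -- especially near the free colours $a_ib_i$ and the ground colour $a_0b_0$ -- to confirm that the forbidden-pattern conditions match the crystal-path structure exactly.
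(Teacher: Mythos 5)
Your first half is sound and coincides with the paper's machinery: the reduction of the (KMN)$^2$ formula to the constant ground state path at $v_0\ot\vv_0$, the substitution $H=\Delta$ from Theorem \ref{theorem:perfcrys}, and the telescoping identification of $\sum_k k\,\Delta(c_k,c_{k+1})$ with the weight of the \emph{minimal}-difference grounded partition attached to a path is exactly Proposition \ref{prop:flat1} together with the first identity of Theorem \ref{theorem:formchar}. The problem is your closing step. You propose to finish by exhibiting a weight- and colour-preserving bijection between minimal-difference $a_0b_0$-grounded partitions and the generalised Capparelli partitions of Definition \ref{def:Capp_conditions}, and you explicitly defer this as ``the principal technical obstacle.'' That bijection is the entire content of the second identity in your scheme, and the first identity is then derived from it via $G^C_n=(q;q)_\infty G^P_n$; so as written, \emph{both} claimed identities rest on an unconstructed map. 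Nothing in the paper (or in the quoted parts of \cite{DK19}) supplies a direct bijection between crystal paths and Capparelli partitions; the forbidden-pattern conditions were not ``designed'' for that purpose, and asserting that a careful case analysis would confirm the match is not a proof.

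The paper closes the argument by a much lighter route that you should compare against. It works with grounded partitions whose consecutive differences are \emph{at least} $H$ (the set $\Ppp$), shows these are exactly the generalised Primc partitions with an appended part $0_{a_0b_0}$ --- the only point needing care being that the smallest part has minimal size $H((v_0\ot\vv_0)\ot b)=\Delta(c(b),a_0b_0)=1$ for every colour, matching the Primc convention --- and then proves the purely combinatorial decomposition $\Ppp\cong\Pppp\times\Pp_{\co}$ (Proposition \ref{prop:diff}: subtract from each part the corresponding part of the unique minimal grounded partition with the same colour sequence, leaving an ordinary partition). This yields the factor $1/(q;q)_\infty$ and hence $G^P_n=e^{-\Lambda_0}\mathrm{ch}(L(\Lambda_0))/(q;q)_\infty$ directly; the Capparelli identity is then an immediate corollary of \cite[Theorem 1.24]{DK19}, with no new bijection required. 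To repair your proof, replace the proposed path-to-Capparelli bijection by this ``subtract the minimal partition'' decomposition, or else actually construct the bijection you invoke.
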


This result gives an evaluation of the character of the irreducible highest weight module for the particular weight $\Lambda_0$, but we can extend our techniques to retrieve the characters for the other level $1$ weights 
of $\bar P_1^+$. 
\begin{theorem}\label{theorem:finalcharacter}
Let $n$ be a positive integer, and let $\Lambda_0, \dots, \Lambda_{n-1}$ be the fundamental weights of $A_{n-1}^{(1)}.$
 By setting $e^{\wt v_i} = b_i$ and $e^{-\delta} = q$, we have the following identities for any $\ell\in\{0,\ldots,n-1\}$:
 \begin{align*}
 G^P_{n}(q;b_0q,\cdots,b_{\ell-1}q,b_{\ell},\dots, b_{n-1})  &=  \frac{e^{-\Lambda_{\ell}}\mathrm{ch}(L(\Lambda_{\ell}))}{(q;q)_{\infty}},
 \\G^C_{n}(\delta,\gamma;q;b_0q,\cdots,b_{\ell-1}q,b_{\ell},\dots, b_{n-1})  &= e^{-\Lambda_{\ell}}\mathrm{ch}(L(\Lambda_{\ell})).
 \end{align*}
\end{theorem}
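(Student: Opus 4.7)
The plan is to extend the argument of Theorem~\ref{theorem:finalcharac} by applying the (KMN)$^2$ crystal base character formula with the appropriate ground state path for each fundamental weight $\Lambda_\ell$ in place of $\Lambda_0$. Since $\Bb = \B \ot \B^\vee$ is a level~$1$ perfect crystal by Theorem~\ref{theorem:perfcrys}, for each $\Lambda_\ell \in \bar P_1^+$ there is a unique ground state path $\ov{p}_\ell$ in $\Bb^{\ot\infty}$ associated to $\Lambda_\ell$; a direct verification in the crystal graph identifies $\ov{p}_\ell$ as the constant path $(v_\ell \ot \vv_\ell)^{\ot\infty}$. By \eqref{eq:equality}, the self-energy of this ground state is $H((v_\ell\ot\vv_\ell)\ot(v_\ell\ot\vv_\ell)) = \Delta(a_\ell b_\ell; a_\ell b_\ell) = 0$, matching the normalization chosen in Theorem~\ref{theorem:perfcrys}, so no additional normalization of $H$ is required when we change $\Lambda_0$ to $\Lambda_\ell$.

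With this ground state at hand, I would apply the (KMN)$^2$ formula and reuse the path-to-partition bijection from the proof of Theorem~\ref{theorem:finalcharac}. A $\Lambda_\ell$-path (a sequence in $\Bb^{\ot\infty}$ eventually equal to $\ov{p}_\ell$) is converted into a generalised Primc partition whose part at position $k$ has colour $a_{k'} b_{\ell'}$ whenever $p_k = v_{\ell'} \ot \vv_{k'}$, and whose part sizes are cumulative values of the energy $H$, equal to $\Delta$ by Theorem~\ref{theorem:perfcrys}. The (KMN)$^2$ weight of each path then translates into a monomial $q^{E(\lambda)} \prod_{i} b_i^{v_i - u_i}$ associated to the corresponding partition $\lambda$.

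The key computation is to compare the $q$-grading obtained with the $\Lambda_\ell$ ground state against the one obtained with the $\Lambda_0$ ground state of Theorem~\ref{theorem:finalcharac}. Because the ground state tail changes from $(v_0 \ot \vv_0)^{\ot\infty}$ to $(v_\ell \ot \vv_\ell)^{\ot\infty}$, the interface energy between the finite ``core'' of the path and the infinite tail shifts accordingly; tracking this shift via the explicit formula \eqref{eq:Delta} for $\Delta$ and telescoping through the cumulative-energy description of the part sizes, one finds that the total difference in $q$-grading of a partition $\lambda$ equals precisely $\sum_{i<\ell}\bigl(v_i(\lambda) - u_i(\lambda)\bigr)$. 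This is exactly the extra $q$-power produced by the substitution $b_i \mapsto b_i q$ for $i \in \{0,\dots,\ell-1\}$ in $G^P_n$, which proves the first identity. The second identity then follows immediately from the relation $G^C_n(\delta,\gamma;q;\cdot) = (q;q)_\infty G^P_n(q;\cdot)$ from \cite{DK19}.

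The principal obstacle will be this final bookkeeping step: although conceptually clean, verifying that the interface-energy shift combines with the positional factors of the (KMN)$^2$ formula to yield exactly $\sum_{i<\ell}(v_i - u_i)$ requires a careful case analysis on $\Delta(a_\ell b_\ell; a_{k'} b_{\ell'}) - \Delta(a_0 b_0; a_{k'} b_{\ell'})$ across all admissible colours. Once this identity is verified at the level of a single partition, summing over all generalised Primc partitions and invoking Theorem~\ref{theorem:finalcharac} concludes the proof.
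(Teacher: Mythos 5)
Your setup matches the paper's: the ground state path for $\Lambda_\ell$ is indeed the constant path on $v_\ell\ot\vv_\ell$, the normalisation $H((v_\ell\ot\vv_\ell)\ot(v_\ell\ot\vv_\ell))=\Delta(a_\ell b_\ell;a_\ell b_\ell)=0$ is consistent with the one fixed in Theorem~\ref{theorem:perfcrys}, and the second identity does follow from the first via $G^C_n=(q;q)_\infty G^P_n$. The gap is in your key step. The correspondence you describe between $\Lambda_0$-paths and $\Lambda_\ell$-paths (keep the finite core, change the infinite tail) does not shift the $q$-grading by $\sum_{i<\ell}(v_i-u_i)$. Each part $\pi_k=\sum_{l\geq k}H(p_{l+1}\ot p_l)$ contains the interface term $H(g\ot p_{s-1})$ exactly once, so swapping the tail shifts the total weight by $s\cdot\bigl(\Delta(c(\pi_{s-1}),a_\ell b_\ell)-\Delta(c(\pi_{s-1}),a_0b_0)\bigr)$, a quantity depending only on the number of parts and the colour of the last part. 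Concretely, for $n=2$ and $\ell=1$, the core $p_1=v_1\ot\vv_0$, $p_0=v_0\ot\vv_1$ yields the partition $(3_{a_1b_0},1_{a_0b_1},0_{a_0b_0})$ of weight $4$ over the ground $a_0b_0$, but $(2_{a_1b_0},0_{a_0b_1},0_{a_1b_1})$ of weight $2$ over the ground $a_1b_1$: the shift is $-2$, whereas $\sum_{i<1}(v_i-u_i)=1-1=0$. (Your formula is correct for single-part paths, which may be what suggested it.)

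More fundamentally, no partition-by-partition comparison of the kind you propose can succeed, because the two sides of the first identity enumerate genuinely different sets: the $\Lambda_\ell$-grounded partitions obey a colour-dependent condition on the smallest part, $\pi_{s-1}\geq\chi(i_{s-1}\geq\ell)+\chi(\ell>j_{s-1})$, which allows last parts of size $0$ for some colours and forces size $2$ for others, while the partitions counted by $G^P_n$ all have smallest part at least $1$. The equality of generating functions is therefore a nontrivial global rearrangement, and this is exactly where the paper invests its effort: it reruns the kernel/insertion machinery and the bijection with $n^2$-coloured Frobenius partitions from the first paper, checking that with the new boundary condition the insertion types still satisfy $T_\Delta(a_kb_{k'})+T_{\Delta''}(a_kb_{k'})=1$, and then reads off the Frobenius generating function as the constant term $[x^0]\prod_{i=0}^{\ell-1}(-b_i^{-1}x;q)_\infty(-b_iqx^{-1};q)_\infty\prod_{i=\ell}^{n-1}(-b_i^{-1}xq;q)_\infty(-b_ix^{-1};q)_\infty$, which is visibly $G^P_n$ after $b_i\mapsto b_iq^{\chi(i<\ell)}$. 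Some substitute for this combinatorial step is unavoidable; the local interface-energy bookkeeping does not supply it.
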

The case $\ell=0$ of Theorem \ref{theorem:finalcharacter} gives Theorem \ref{theorem:finalcharac}.

\medskip

As mentioned earlier, finding an expression of the character as a series with positive coefficients is an important problem. In \cite{KacPeterson}, using modular forms and string functions, Kac and Peterson gave a formula for $e^{-\Lambda}\mathrm{ch}(L(\Lambda))$ for all the irreducible highest weight level $1$ modules $L(\Lambda)$ of most classical types as a series in $\Z[[e^{-\alpha_0},e^{-\alpha_1},\cdots,e^{-\alpha_{n-1}}]]$ with obviously positive coefficients. This built on earlier work of Kac \cite{KacCharacter}, in which he proved the particular case where $M=L(\Lambda_0)$ in $A_n^{(1)},$ $D_n^{(1)},$ and $E_n^{(1)}.$

 In \cite{BW15}, Bartlett and Warnaar  used Hall-Littlewood polynomials to give explicitly positive formulas for the characters of certain highest weight modules of the affine Lie algebras $C_n^{(1)}$, $A_{2n}^{(2)}$, and $D_{n+1}^{(2)}$, which also led to generalisations for the Macdonald identities in types $B_n^{(1)},$ $C_n^{(1)},$ $A_{2n-1}^{(2)},$ $A_{2n}^{(2)}$, and $D_{n+1}^{(2)}.$ However their approach failed to give a formula for the case $A_{n}^{(1)}.$
 Using Macdonald-Koornwinder theory, Rains and Warnaar \cite{WarnaarRains} later found additional character formulas for these types, together with new Rogers-Ramanujan type identities.
 
 In \cite{GOW16}, Griffin, Ono, and Warnaar obtained a limiting Rogers-Ramanujan type identity for the principal specialisation of the character of some particular weights $(m-k)\Lambda_0+k\Lambda_1$ in $A_{n}^{(1)}.$ 
On the other hand, Meurman and Primc \cite{Meurman2} treated the case of all levels of $A_1^{(1)}$ via vertex operator algebras.

Here, using our non-dilated character formula from Theorem \ref{theorem:finalcharacter}, we recover, for all $\ell\in\{0,\dots,n-1\}$, the character formula of Kac-Peterson. Moreover, we give a new explicit expression for $e^{-\Lambda_{\ell}}\mathrm{ch}(L(\Lambda_{\ell}))$ as a sum of $(n-1)!$ series with positive coefficients  $\Z[[e^{-\alpha_0},e^{-\alpha_1},\cdots,e^{-\alpha_{n-1}}]]$, each of which are infinite product generating functions for partitions into distinct parts with congruence conditions.

\begin{theorem}\label{theorem:formulaexp}
Let $n$ be a positive integer, and let $\Lambda_0, \dots, \Lambda_{n-1}$ be the fundamental weights of $A_{n-1}^{(1)}.$
For all $\ell\in\{0,\ldots,n-1\}$, we have 
\begin{align}
&e^{-\Lambda_{\ell}}\mathrm{ch}(L(\Lambda_{\ell})) \nonumber
\\&=\frac{1}{(e^{-\delta};e^{-\delta})_{\infty}^{n-1}}\sum_{\substack{s_1, \dots, s_{n-1}\in \Z\\s_0=s_n=0}} e^{-s_{\ell}\delta}\prod_{i=1}^{n-1} e^{s_i\alpha_i}e^{s_i(s_{i+1}-s_{i})\delta} \label{eq:KacPet}
\\&=\left(\prod_{i=1}^{n-1}\frac{\left(e^{-i(i+1)\delta};e^{-i(i+1)\delta}\right)_{\infty}}{(e^{-\delta};e^{-\delta})_{\infty}}\right)\sum_{\substack{r_1, \dots, r_{n-1}:\\0 \leq r_j \leq j-1\\r_n=0}} e^{-r_l\delta}\prod_{i=1}^{n-1} e^{r_i\alpha_i}e^{r_i(r_{i+1}-r_i)\delta} \nonumber
\\& \qquad \qquad \qquad \times \left(-e^{(ir_{i+1}-(i+1)r_i-\frac{i(i+1)}{2}-\ell\chi(i\geq l>0))\delta+\sum_{j=1}^i j\alpha_j};e^{-i(i+1)\delta}\right)_{\infty} \label{eq:newcharacter}
\\& \qquad \qquad \qquad \times \left(-e^{((i+1)r_i-ir_{i+1}-\frac{i(i+1)}{2}+\ell\chi(i\geq l>0))\delta-\sum_{j=1}^i j\alpha_j};e^{-i(i+1)\delta}\right)_{\infty}, \nonumber
\end{align}
where $\delta= \alpha_0 + \cdots + \alpha_{n-1}$ is the null root.
\end{theorem}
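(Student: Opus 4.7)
The plan is to apply Theorem \ref{theorem:finalcharacter}, which reduces the problem to computing
$$e^{-\Lambda_\ell}\mathrm{ch}(L(\Lambda_\ell)) = (q;q)_\infty \cdot G^P_n(q;b_0 q,\ldots,b_{\ell-1}q,b_\ell,\ldots,b_{n-1})$$
under the identifications $b_i=e^{\wt v_i}$, $q=e^{-\delta}$, and then to plug in the two explicit expressions for $G^P_n$ supplied by Theorem \ref{theorem:main2}. The shift $b_i\mapsto b_iq$ for $i<\ell$ must be tracked carefully through each factor, and the result translated to the $e^{-\alpha_j}$ basis using the fact that the classical weights of the vector representation of $A_{n-1}^{(1)}$ satisfy $\wt v_{i-1}-\wt v_i=\alpha_i$ for $i=1,\ldots,n-1$.

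For the Kac--Peterson formula \eqref{eq:KacPet}, I start from the sum expression \eqref{eq:jacob}. The substitution multiplies the summand by
$$q^{\,s_1\chi(\ell>0)+\sum_{i=1}^{\ell-1}(-s_i+s_{i+1})}=q^{s_\ell}$$
(telescoping, with the convention $s_0=0$), producing the desired $e^{-s_\ell\delta}$ factor, while multiplying through by $(q;q)_\infty$ turns the prefactor $1/(q;q)_\infty^n$ into $1/(q;q)_\infty^{n-1}$. The remaining monomial $b_0^{s_1}\prod_{i=1}^{n-1}b_i^{-s_i+s_{i+1}}$ is rewritten as $\prod_{i=1}^{n-1}e^{s_i\alpha_i}$ by a second telescoping:
$$s_1\wt v_0+\sum_{i=1}^{n-1}(-s_i+s_{i+1})\wt v_i=\sum_{i=1}^{n-1}s_i(\wt v_{i-1}-\wt v_i)+s_n\wt v_{n-1}=\sum_{i=1}^{n-1}s_i\alpha_i,$$
in which $s_n=0$ kills the boundary term and all $\wt v_0$-dependence cancels.

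For the new formula \eqref{eq:newcharacter}, I start from \eqref{eq:formulefinale} and track three groups of factors. First, the outer monomial $\prod_{i=1}^{n-1}b_i^{-r_i+r_{i+1}}$ picks up $q^{-r_1+r_\ell}$, which simplifies to $q^{r_\ell}=e^{-r_\ell\delta}$ once one observes that the constraint $0\le r_1\le 0$ forces $r_1=0$. Second, inside the first infinite $q$-Pochhammer factor, the argument $\prod_{m=0}^{i-1}b_m\cdot b_i^{-i}$ acquires $q^{\min(\ell,i)-i\chi(i<\ell)}$, which equals $q^\ell$ when $i\ge\ell>0$ and $1$ otherwise, producing exactly the $-\ell\chi(i\ge\ell>0)$ correction to the $\delta$-exponent; a symmetric analysis of $\prod_{m=0}^{i-1}b_ib_m^{-1}$ in the second infinite product yields the matching $+\ell\chi(i\ge\ell>0)$ correction. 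Third, the conversion
$$\prod_{m=0}^{i-1}b_mb_i^{-1}=e^{\sum_{m=0}^{i-1}\wt v_m-i\wt v_i}=e^{\sum_{j=1}^{i} j\alpha_j}$$
follows from $\wt v_m=\wt v_0-\sum_{j=1}^m\alpha_j$ by a direct computation in which the $\wt v_0$-contributions from $\prod_{m=0}^{i-1}b_m$ and from $b_i^{-i}$ cancel.

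The main obstacle is the book-keeping: distinguishing the regimes $i<\ell$ and $i\ge\ell$ inside each of the two infinite products when accounting for the shift, exploiting the boundary constraints $r_1=s_n=0$ to collapse the telescoping sums into their clean final forms, and verifying that every $\wt v_0$-contribution (equivalently, every $b_0$-contribution) cancels so that both identities live entirely in $\Z[[e^{-\alpha_0},\ldots,e^{-\alpha_{n-1}}]]$ as required.
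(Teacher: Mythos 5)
Your proposal is correct and follows essentially the same route as the paper: apply Theorem \ref{theorem:finalcharacter} together with the two expressions \eqref{eq:jacob} and \eqref{eq:formulefinale} of Theorem \ref{theorem:main2}, and convert via $q=e^{-\delta}$, $e^{\alpha_i}=b_{i-1}b_i^{-1}$ (equivalently $\wt v_{i-1}-\wt v_i=\alpha_i$), checking that the $b_0$-contributions cancel so the identification is well defined. The only cosmetic difference is that the paper first writes out the case $\ell=0$ and then observes that $b_j\mapsto b_jq^{\chi(j<\ell)}$ amounts to $e^{\alpha_j}\mapsto e^{-\chi(j=\ell)\delta+\alpha_j}$, whereas you track the shift directly through each factor; your telescoping computations producing $e^{-s_\ell\delta}$, $e^{-r_\ell\delta}$ and the $\pm\ell\chi(i\geq\ell>0)$ corrections are exactly the bookkeeping the paper leaves implicit.
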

The character formula \eqref{eq:KacPet} is, up to a change of variables, a reformulation of the Kac-Peterson character formula for the type $A^{(1)}_{n-1}$ given in \cite[p.217]{KacPeterson}.
Thus, our partition identity Theorem \ref{theorem:main2}, combined with Theorem \ref{theorem:finalcharacter}, makes the connection between the KMN$^2$ crystal base character formula and the Kac-Peterson character formula.

The \textit{principal specialisation} \cite[Chapter 10]{Kac} for the affine type $A_{n-1}^{(1)}$ consists in transforming the generators with 
\begin{equation*}
  e^{-\alpha_i}\mapsto  q \quad \text{for all }i\in\{0,\ldots,n-1\}.
\end{equation*}
In that case, we have a natural transformation $b_i := q^ib_0$ and 
a dilated version of the character formula can be deduced from Theorems \ref{theorem:main2} and \ref{theorem:finalcharacter}.
\begin{cor}
Let $n$ be a positive integer, and let $\Lambda_0, \dots, \Lambda_{n-1}$ be the fundamental weights of $A_{n-1}^{(1)}.$
For all $\ell\in \{0,\cdots,n-1\}$, the principal specialisation of $e^{-\Lambda_{\ell}}\mathrm{ch}(L(\Lambda_{\ell}))$, denoted by $\mathbb{F}_{\mathds{1}}(e^{-\Lambda_{\ell}}\mathrm{ch}(L(\Lambda_{\ell})))$, is the generating function of the classical integer partitions with no parts divisible by $n$ : 
 \begin{align*}
  \mathbb{F}_{\mathds{1}}(e^{-\Lambda_{\ell}}\mathrm{ch}(L(\Lambda_{\ell})))&= (q^n;q^n)\times G_n^P(q^n;q^nb_0,\cdots,q^{n+\ell-1}b_0,q^{\ell},\cdots,q^{n-1}b_0)\\
  &= (q^n;q^n)\times [x^0]\left(\prod_{i=0}^{\ell-1}(-q^{-i}b_0^{-1}x;q^n)_{\infty}(-q^{n+i}b_0x^{-1};q^n)_{\infty} \right.\\
  &\qquad\qquad\qquad\qquad\times \left.\prod_{i=\ell}^{n-1}(-q^{n-i}b_0^{-1}x;q^n)_{\infty}(-q^{i}b_0x^{-1};q^n)_{\infty}\right)\\
  &= (q^n;q^n)\times [x^0](-q^{1-\ell}b_0^{-1}x;q)_{\infty}(q^{\ell}b_0x^{-1};q)_{\infty}\\
  &= \frac{(q^n;q^n)}{(q;q)_\infty}.
 \end{align*}
\end{cor}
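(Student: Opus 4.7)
The plan is to deduce this corollary directly from Theorem \ref{theorem:finalcharacter} combined with the coefficient-extraction form of the generating function in Theorem \ref{theorem:main2}, using only elementary $q$-series manipulation at the end.

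First, I would apply the principal specialisation $e^{-\alpha_i}\mapsto q$ to the identity
\[
G^P_{n}(q;b_0q,\ldots,b_{\ell-1}q,b_{\ell},\ldots, b_{n-1}) = \frac{e^{-\Lambda_{\ell}}\mathrm{ch}(L(\Lambda_{\ell}))}{(q;q)_{\infty}}
\]
of Theorem \ref{theorem:finalcharacter}. Since $\delta = \alpha_0+\cdots+\alpha_{n-1}$, one has $e^{-\delta}\mapsto q^n$, so the formal variable playing the role of $q$ inside $G^P_n$ becomes $q^n$, while the natural weight convention $b_i = q^i b_0$ sends $b_i\mapsto q^i b_0$ for $i\geq \ell$ and $b_i q\mapsto q^{n+i} b_0$ for $i<\ell$. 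Multiplying through by $(q^n;q^n)_\infty$ yields the first displayed equality of the statement.

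Next, I would substitute these values into the $[x^0]$ formula from Theorem \ref{theorem:main2}, namely $G^P_n(q;b_0,\ldots,b_{n-1}) = [x^0]\prod_{i=0}^{n-1}(-b_i^{-1}xq;q)_\infty(-b_i x^{-1};q)_\infty$, producing the second displayed expression as a product of $q^n$-Pochhammer symbols. The key observation is then a telescoping: in the "$b_0^{-1}x$" half, the exponents of $q$ that appear are $\{-i:0\leq i<\ell\}\cup\{n-i:\ell\leq i\leq n-1\}$, which is the consecutive block $\{1-\ell,\ldots,n-\ell\}$ of length $n$, a complete set of residues modulo $n$; symmetrically, the "$b_0 x^{-1}$" half contributes the block $\{n+i:i<\ell\}\cup\{i:\ell\leq i\leq n-1\} = \{\ell,\ldots,n+\ell-1\}$. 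The elementary identity
\[
\prod_{a=a_0}^{a_0+n-1}(-q^a y;q^n)_\infty = (-q^{a_0}y;q)_\infty
\]
(immediate by splitting $m = a+kn$ in the double product) then collapses each block of $n$ factors with base $q^n$ to a single factor with base $q$, yielding the third equality with bracketed integrand $(-q^{1-\ell}b_0^{-1}x;q)_\infty(-q^\ell b_0 x^{-1};q)_\infty$.

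Finally, I would apply Jacobi's triple product in the form
\[
(-z;q)_\infty(-q z^{-1};q)_\infty (q;q)_\infty = \sum_{m\in\Z} q^{m(m-1)/2} z^m,
\]
with $z = q^{1-\ell}b_0^{-1}x$ (so that $qz^{-1} = q^\ell b_0 x^{-1}$). This rewrites the bracketed product as a Laurent series in $x$ whose constant term comes solely from $m=0$ and equals $1/(q;q)_\infty$. Multiplying by the prefactor $(q^n;q^n)_\infty$ produces $(q^n;q^n)_\infty/(q;q)_\infty$, which is the classical generating function for partitions with no part divisible by $n$. No step is genuinely hard: the only point requiring some care is the bookkeeping that ensures the two sets of exponents assemble into consecutive blocks of length $n$, which is exactly what triggers the collapse from base $q^n$ to base $q$ and makes the Jacobi triple product applicable.
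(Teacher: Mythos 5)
Your proposal is correct and follows essentially the same route as the paper: the corollary's displayed chain is exactly obtained by applying the principal specialisation ($e^{-\delta}\mapsto q^n$, $b_i\mapsto q^ib_0$) to Theorem \ref{theorem:finalcharacter}, plugging into the $[x^0]$ product of Theorem \ref{theorem:main2}, merging the base-$q^n$ Pochhammer factors over complete residue blocks into base-$q$ factors, and extracting the constant term via the Jacobi triple product. Your bookkeeping of the exponent blocks $\{1-\ell,\dots,n-\ell\}$ and $\{\ell,\dots,n+\ell-1\}$ and the final constant-term evaluation match the paper's computation (which, incidentally, has a harmless typo of a missing minus sign in $(q^{\ell}b_0x^{-1};q)_{\infty}$).
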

In this particular case, we recover the principal specialisation of the Weyl-Kac character formula \cite{Kac}.

\medskip
The remainder of this paper is organised as follows. In Section \ref{sec:reptheory}, we recall the necessary definitions and theorems about representation theory and crystal bases. In Section \ref{sec:groundedpartitions}, we define \textit{grounded partitions}, which will play a key role in obtaining a non-specialised character formula. In Section \ref{sec:An1}, we define the $A_{n-1}^{(1)}$ crystals related to our difference condition/energy function $\Delta.$ In Section \ref{sec:proofchar}, we prove our character formulas (Theorems \ref{theorem:finalcharac}, \ref{theorem:finalcharacter}, and \ref{theorem:formulaexp}) assuming that $\Delta$ is an energy function for our crystal. Finally, in Sections \ref{sec:toolperfcrys} and \ref{sec:proofcrys}, we prove that this is indeed the case, by constructing some paths on the crystal graph.

\section{Basics on Crystals}
\label{sec:reptheory}
In this section, we recall the definitions and basic theorems from crystal base theory which are necessary for our purpose.
We refer to the book \cite{HK}, which we consider to be a good summary of the basic theory of Kac-Moody algebras \cite[Chapter 2]{HK}, quantum groups \cite[Chapter 3]{HK} and crystal bases \cite[Chapters 4, 10]{HK}. For a more combinatorial approach and more emphasis on the finite dimensional case, we refer the reader to \cite{SchillingBump}.

Throughout this section, $n$ is a fixed positive integer.

\subsection{Cartan datum and quantum affine algebras}
A square matrix $A = \bigl( a_{i,j}\bigr)_{i,j\in \I}$ is said to be a generalised Cartan matrix  if $A$ has the
following properties:   
\begin{itemize}
\item for all $i \in \I$, $a_{i,i} = 2,$ 
\item for all $i \neq j$ in $\I$, $a_{i,j} \in \Z_{\leq 0},$
\item $a_{i,j} = 0$ if and only if $a_{j,i} = 0$,
\end{itemize}
Moreover, if there exists a diagonal matrix $D$ with positive integer coefficients such that $DA$ is symmetric, then $A$ is said to be symmetrisable.
In addition, if the rank of the matrix $A$ is $n-1$, then $A$ is said to be of affine type.
In this paper, we always assume that this is the case.

Let us consider such a matrix $A$.  Let $P^\vee$ be a free abelian group of rank $n+1$ with $\Z$-basis $\{h_0, \dots, h_{n-1},d\}:$
$$P^\vee = \Z h_0 \oplus \Z h_1 \oplus \cdots \oplus \Z h_{n-1}
\oplus \Z d.$$
We call $P^\vee$  the {\it dual weight lattice}. The complexification $\h = \mathbb C \ot_{\Z} P^\vee$ is called the \textit{Cartan subalgebra}. The linear functionals $\alpha_i$ and $\Lambda_i$ ($i \in \I$)
on $\h$ given by

\begin{equation}
\label{eq:rootcoroot}
\begin{array}{cccc} \langle h_j,\alpha_i \rangle: =  \alpha_i(h_j) = a_{j,i}& \qquad &\langle d, \alpha_i \rangle: = \alpha_i(d) = \delta_{i,0}& \\
\langle h_j, \Lambda_i \rangle: = \Lambda_i(h_j) = \delta_{i,j}& \qquad &\langle d, \Lambda_i \rangle: = \Lambda_i(d) = 0 &\quad (i,j\in \I) \end{array}
\end{equation}
are respectively the {\it simple roots} and {\it fundamental weights}. Let $\h^*$ be the dual space of $\h.$ We denote by  $\Pi = \{\alpha_i \mid i \in \I\} \subset \h^*$ the set of simple roots,
and define $\Pi^\vee = \{h_i \mid i \in \I\}\subset \h$ to be the set of \textit{simple coroots}. We also set
$$P = \{\lambda \in \mathfrak h^* \mid \lambda(P^\vee) \subset \Z\}$$
 to be the \textit{weight lattice}. It contains the set of {\it dominant integral weights}
$$
 P^+ = \{\lambda \in P \mid \lambda(h_i) \in \Z_{ \geq 0} \text{ for all i } \in \I\}.
$$
\m
The quintuple $(A,\Pi,\Pi^\vee,P,P^\vee)$ is said to be a \textit{Cartan datum} for the Cartan matrix $A$.
The \textit{Kac-Moody affine Lie algebra} $\widehat \g$ attached to this datum is the Lie algebra with generators $e_i,f_i \ (i \in \I)$ and $h \in P^\vee$, with the following defining relations
(\cite[Definition~2.1.3]{HK}):
\begin{enumerate}
 \item $[h,h']=0$ for all $h,h'\in P^\vee$,
 \item $[e_i,f_j]= \delta_{ij}h_j$,
 \item $[h,e_i]=\alpha_i(h)e_i$ for all $h\in P^\vee$,
 \item $[h,f_i]=-\alpha_i(h)f_i$ for all $h\in P^\vee$,
 \item $(\textrm{ad} e_i)^{1-a_{i,j}}e_j= (\textrm{ad} f_i)^{1-a_{i,j}}f_j =0$ for $i \neq j$,
\end{enumerate}
where $\textrm{ad} x : y\mapsto [x,y]$.

We also define the \textit{coroot lattice}
$$\bar P^\vee = \Z h_0 \oplus \Z h_1 \oplus \cdots \oplus \Z h_{n-1},$$
 and its complexification $\bar \h = \mathbb C \ot_{\Z}\bar P^\vee$.  The
 $\Z$-submodule
$$\Z \Lambda_0 \oplus \Z \Lambda_1 \oplus \cdots \oplus \Z \Lambda_{n-1}$$
of  $P$ is called the lattice of  {\it classical weights} and is denoted by $\bar P.$

\begin{remark}
By \eqref{eq:rootcoroot}, for all $j \neq 0,$ we have
$$\alpha_j = \sum_{i=0}^{n-1}a_{i,j} \Lambda_i \ \in \bar P.$$
We will denote by $\ov \alpha_0$ the restriction of $\alpha_0$ to $\bar P.$
\end{remark}

 Let
  \,$\bar P^+
: = \sum_{i= 0}^n  \Z_{ \geq 0} \Lambda_i$ \, denote the corresponding set of  dominant weights.  \m

The center 
$$\Z c = \{h\in P^\vee: \langle h,\alpha_i \rangle = 0 \text{ for all i } \in \I\}$$
of the affine Lie algebra $\widehat \g$
is one-dimensional and generated by the {\it canonical central element} $c$, where
$$c=c_0 h_0 + \cdots + c_{n-1} h_{n-1}.$$

The space of imaginary roots 
$$ \Z \delta = \{\lambda\in P: \langle h_i,\lambda \rangle = 0 \text{ for all i } \in \I\}$$
of $\widehat \g$ is also one-dimensional, generated by the {\it null root} $\delta$, where
$$\delta = d_0 \alpha_0 + d_1 \alpha_1 + \cdots + d_{n-1} \alpha_{n-1}.$$
The vector $^{t}(d_0,d_1,\dots, d_{n-1})
\in \mathbb C^{n}$ spans the kernel of the Cartan matrix $A$.
The {\it level \  $\ell$} of a dominant weight $\lambda \in P^+$ is given by the expression
$\langle c, \lambda \rangle : = \lambda(c) = \ell$. 

For  any $k \in \Z$ and an indeterminate $q$,  let us set
$$[k]_q  = \frac{q^k- q^{-k}}{q-q^{-1}}.$$
We also set $[0]_q\,! = 1$ and for $k \geq 1$, $[k]_q\,! = [k]_q[k-1]_q \cdots [1]_q$. For $m \geq k \geq 0$, define
$$\left\langle\begin{array}{c} m \\ k \end{array}\right\rangle_{q} = \frac{[m]_q\,!}{[k]_q\,!\,[m-k]_q\,!}.$$

\medskip
 We now have all the definitions necessary to introduce quantum affine Lie algebras.
\begin{defn}\label{defn:qalg}\cite[Definition 3.1.1]{HK}
The {\it quantum affine algebra} $U_q(\widehat \g)$ associated with the Cartan datum $(A,\Pi,\Pi^\vee, P, P^\vee)$
is the associative algebra with unit element over $\mathbb C(q)$ (where $q$ is an indeterminate)
with generators $e_i,\,f_i \ (i \in \I)$, and $q^h \ (h \in P^\vee)$,  satisfying the defining relations:
\begin{itemize}
\item[\rm{(1)}] $ q^0 = 1, \ q^hq^{h'} = q^{h+h'}$  \ \ \  for $h,h' \in P^\vee$,
\item[\rm{(2)}] $q^h e_i q^{-h} = q^{\alpha_i(h)}e_i$ \ \ \ for $h \in P^\vee$, $i \in \I$,
\item[\rm{(3)}] $q^h f_i q^{-h} = q^{-\alpha_i(h)}f_i$ \ \ \  for $h \in P^\vee$, $i \in \I$,
\item[\rm{(4)}] $e_if_j - f_je_i = \displaystyle{\delta_{i,j}\frac{K_i-K_i^{-1}}{q_i-q_i^{-1}}}$ \ \ \  for $i,j \in \I$,
\item[\rm{(5)}] $\displaystyle{\sum_{k=0}^{1-a_{i,j}} \left\langle\begin{array}{c} 1-a_{i,j} \\ k \end{array}\right\rangle_{q_i}
e_i^{1-a_{i,j}-k} e_j e_i^k = 0}$  \ \ \  for  $i \neq j$,
\item[\rm{(6)}] $\displaystyle{\sum_{k=0}^{1-a_{i,j}} \left\langle\begin{array}{c} 1-a_{i,j} \\ k \end{array}\right\rangle_{q_i}
f_i^{1-a_{i,j}-k} f_j f_i^k = 0}$  \ \ \ for  $i \neq j$.
 \end{itemize}
Here $q_i = q^{s_i}$ and $K_i = q^{s_i h_i}$, where $D=diag(s_i : i \in \{0, \dots, n-1\})$ is a symmetrising matrix of $A$.
\end{defn}
\begin{defn}\label{defn:subalgebra}
The \textit{quantum affine algebra} $U_q'(\widehat \g)$ is the subalgebra of $U_q(\widehat \g)$ generated by $e_i, f_i, K_i^{\pm 1}\ (i \in \I)$.
\end{defn}
Contrarily to $U_q(\widehat \g)$, the quantum affine algebra $U_q'(\widehat \g)$ admits some non-trivial finite-dimensional irreducible modules.

\subsection{Integrable modules, highest weight modules and character formula}
We are now ready to define irreducible highest weight modules and characters.
\begin{defn}
Let $\g$ be a Lie algebra with bracket $[\cdot,\cdot]$, and let $V$ be a vector space. Then $V$ is a \textit{$\g$-module} if there is a bilinear map  $\g \times V \rightarrow V$, denoted by $(x,v) \mapsto x \cdot v$, satisfying for all $x,y \in \g$ and all $v \in V$:
$$[x,y] \cdot v = x \cdot (y \cdot v) - y \cdot (x \cdot v). $$
\end{defn}

A subspace $W$ of a $\g$-module $V$ is called a \emph{submodule} of $V$ if for all $x \in \g$, $x \cdot W \subseteq W.$

A $\g$-module $V$ is said to be \emph{irreducible} if its only submodules are $V$ and $0$.

The notion of modules extends naturally from an affine Lie algebra $\widehat\g$ to its quantum affine algebra $U_q(\widehat \g)$.

\m
\begin{defn}
A $U_q(\widehat \g)$-module $M$ is said to be \emph{integrable} if it satisfies the following properties:
\begin{itemize}
\item[{(a)}] $M$ has a weight space decomposition: \
$M = \bigoplus_{\lambda \in P} M_\lambda$,  where
$M_\lambda = \{v \in M \mid q^h\cdot v = q^{\lambda(h)}v$ for all $h \in P^\vee \}$;
\item[{(b)}] there are finitely many $\lambda_1,\dots, \lambda_k \in P$ such that
$\mathrm{wt}(M) \subseteq \Omega(\lambda_1) \cup \dots \cup \Omega(\lambda_k)$,\
where $\mathrm{wt}(M) = \{\lambda \in P \mid M_\lambda \neq 0\}$ and
$\Omega(\lambda_j) = \{\mu \in P \mid \mu \in \lambda_j+ \sum_{i\in \I} \Z_{\leq 0} \alpha_i\}$;
\item[{(c)}]  the elements $e_i$ and $f_i$ act locally nilpotently on $M$ for all $i \in \I$.
\end{itemize} 
We denote by $\mathcal O^q_{\hbox {\rm \small  int}}$ the category of integrable $U_q(\widehat \g)$-modules.
\end{defn}

\m
For all $\lambda\in P$, a module of \textit{highest weight} $\lambda$ is an integrable module such that:
\begin{itemize}
\item[{(a)}]$\mathrm{wt}(M) \subseteq \Omega(\lambda)$; 
\item[{(b)}]$\dim M_{\lambda}=1$;
\item[{(b)}]$M = U_q(\widehat \g) M_{\lambda}$.
\end{itemize}
For all $\lambda\in P$, up to isomorphism, there exists a unique highest weight module which is \textit{irreducible}. We denote by $L(\lambda)$ the \textit{irreducible highest weight} $U_q(\widehat \g)$-module of highest weight $\lambda$.
\begin{defn}
Let $M$ be an integrable module such that $\dim M_{\lambda}<\infty$ for all $\lambda\in \mathrm{wt}(M).$ The \textit{character} of $M$ is defined by 
\begin{equation}\label{eq:character}
 \mathrm{ch} (M) = \sum_{\lambda\in \mathrm{wt}(M)} \dim M_{\lambda} \cdot e^{\lambda},
\end{equation}
where the $e^{\lambda}$'s  are formal basis elements of the group algebra $\mathbb C[\mathfrak{h}^*]$, with the multiplication defined by $e^{\lambda}e^{\mu}= e^{\lambda+\mu}$.
\end{defn}  
When $M$ is a highest weight module of highest weight $\lambda$, its character satisfies
$$
e^{-\lambda}\mathrm{ch} (M) = \sum_{\mu\in \mathrm{wt}(M)} \dim M_{\lambda} \cdot e^{\mu-\lambda} \quad \in \quad \Z_{\geq 0}[[e^{-\alpha_i}, i\in \I]].
$$
All these definitions on modules also hold in the case of the $\widehat \g$-modules $M'$, where the weight spaces are given by
$M'_\lambda = \{v \in M' \mid h\cdot v = \lambda(h)v$ for all $h \in P^\vee \}$. Thus, looking at the generators of the weight spaces, for a fixed weight $\lambda\in P$, the \textit{irreducible highest weight} $\widehat \g$-module can be identified with the \textit{irreducible highest weight} $U_q(\widehat \g)$-module, and we have equality of characters.
\subsection{Crystal bases}
The crystal base theory was developed independently by Kashiwara \cite{Kas90} and Lusztig \cite{Lusz90} to study the category $\mathcal O^q_{\hbox {\rm \small  int}}$ of integrable $U_q(\widehat g)$-modules.
If $M$ is a module in the category $\mathcal O^q_{\hbox {\rm \small  int}}$, then for each $i \in \I$,  a weight vector $u \in M_\lambda$
can be written uniquely in the form $u = \sum_{k=0}^N  f_i^{(k)} u_k$,  for some $N\geq 0$ and 
$u_k \in M_{\lambda+k\alpha_i} \cap \ker e_i$  for  all $k=0,1,\dots,N$, with  
$f_i^{(k)} = f_i^k/ ([k]_{q_i}!)$.    
The {\it Kashiwara operators} $\tilde e_i$ and $\tilde f_i$, for $i \in\I$, are then defined as follows:
\begin{equation}
\label{eq:kashiwara}
\tilde e_i u = \sum_{k=1}^N f_i^{(k-1)}u_k,   \qquad \ \   \tilde f_i u = \sum_{k=0}^N
f_i^{(k+1)}u_k.
\end{equation}

Crystal bases will be seen as bases at $q=0.$ To do so, let us define the \textit{localisation} of $\mathbb C[q]$ at $q=0$ by $\mathbb A_0 = \{ f = g/h \mid g,h  \in \mathbb C[q], \  h(0) \neq 0 \}.
$ 
\begin{defn}\cite[Definition 4.2.2]{HK}  Assume that $M$ is a $U_q(\widehat \g)$-module in the category $\mathcal O^q_{\hbox {\rm \small  int}}$.  A free
$\mathbb A_0$-submodule $\mathcal L$ of $M$ is a {\it crystal lattice} if
\begin{itemize}
\item[{\rm (i)}] $\mathcal L$ generates $M$ as a vector space over $\mathbb C(q)$;
\item[{\rm (ii)}]  $\mathcal L = \bigoplus_{\lambda \in P} \mathcal L_\lambda$ \  where \
$\mathcal L_\lambda = M_\lambda \cap \mathcal L$;
\item[{\rm (iii)}]  $\tilde e_i \mathcal L \subset \mathcal L$ \  and \
$\tilde f_i \mathcal L \subset \mathcal L,$\ for all $i \in \I$.
\end{itemize}  \end{defn}
Since the operators $\tilde e_i$ and $\tilde f_i$ preserve the lattice $\mathcal L$,
they also define operators on the quotient $\mathcal L/q \mathcal L$.

\begin{defn}\cite[Definition 4.2.3]{HK}  A {\it crystal base} for a $U_q(\widehat \g)$-module $M \in \mathcal O^q_{\hbox {\rm \small  int}}$ is a pair
$(\mathcal L, \B)$   such that
\begin{itemize}
\item[{\rm (1)}] $\mathcal L$ is a crystal lattice of $M$;
\item[{\rm (2)}]  $\B$ is a $\mathbb C$-basis of $\mathcal L/q \mathcal L \cong \mathbb C \ot_{\mathbb A_0} \mathcal L$;
\item[{\rm(3)}]  $\B = \sqcup_{\lambda \in P} \B_\lambda$, \ where \
$\B_\lambda = \B \cap (\mathcal L_\lambda/ q \mathcal L_\lambda)$;
\item[{\rm(4)}]  $\tilde e_i \B_{\lambda} \subset \B_{\lambda+\alpha_i} \cup \{0\}$ \  and \
$\tilde f_i \B_{\lambda} \subset \B_{\lambda-\alpha_i} \cup \{0\}$ for all $i \in \I$;
\item[{\rm(5)}]  $\tilde f_i b = b'$ \  if and only if \  $b = \tilde e_i b',$ \  for \  $b,b' \in \B$ \ and \ $i\in \I$.  \end{itemize}
\end{defn}

To each module $M \in \mathcal O^q_{\hbox {\rm \small  int}}$, one can associate a corresponding crystal base $(\mathcal L, \B)$, which is unique up to isomorphism \cite[Chapter 5]{HK}. Therefore, from now on, we will refer to ``the'' crystal base of $M.$ 

Furthermore, the 
\textit{crystal graph} associated to $(\mathcal L, \B)$ can be defined as follows. The set of vertices is $\B$, and the oriented edges are built as follows:
$$ b \xrightarrow[]{\,\,\, i \,\,\,} b' \quad \text{if and only if}\quad \tilde f_i b = b' \text{ (or equivalently } \tilde e_i b' = b) .$$
\begin{rem}
When $\tilde f_i b =0$ (resp. $\tilde e_i b =0$), then there is no edge labelled $i$ coming out of $b$ (resp. arriving in $b$).
\end{rem}
The crystal graph can be viewed as a combinatorial data of the module $M$.
\m
For $i \in \I$,  let us define functions $\varepsilon_i, \varphi_i: \B \rightarrow \Z$ as follows:
$$
\begin{array}{cc}
&\varepsilon_i(b) = \max\{ k  \geq 0 \mid \tilde e_i^k b \in \B\}, \\
&\varphi_i(b) =  \max\{ k  \geq 0 \mid \tilde f_i^k b \in \B\}. \end{array}
$$
Thus  $\varepsilon_i(b)$ is the length of the longest chain of $i$-arrows ending at $b$ in the crystal graph, and $\varphi_i(b)$ is the length of the longest chain of
$i$-arrows starting from $b$.  Furthermore, we have $\varphi_i(b) -\varepsilon_i(b) = \lambda(h_i)$ for
all $b \in \B_\lambda$.  Thus, by setting $\mathrm{wt} b = \lambda$,
\begin{equation}
\label{eq:epsilonphi}
\varepsilon(b) = \sum_{i=0}^{n-1} \varepsilon_i(b) \Lambda_i,
\qquad \text{and} \qquad \varphi(b) = \sum_{i=0}^{n-1} \varphi_i(b) \Lambda_i,
\end{equation}
we then have $ \wt b = \varphi(b) -\varepsilon(b)$ for all $b \in \B_\lambda$, where $\wt b$ is the projection of $\mathrm{wt}b$ on $\ov{P}$. Also, by the definition of the weight vectors $u_k$ in the Kashiwara operators \eqref{eq:kashiwara}, we have for all $b\in \B$ such that
$\tilde e_i b \neq 0$,
\begin{equation}\label{eq:weightalpha}
 \mathrm{wt} \tilde e_i b - \mathrm{wt} b = \alpha_i.
\end{equation}

Let us now introduce the notion of crystal.
\begin{defn}\cite[Definition 4.5.1]{HK} 
Let $A=(a_{i,j})_{0 \leq i,j \leq n-1}$ be a Cartan matrix with associated \textit{Cartan datum} $(A,\Pi,\Pi^\vee,P,P^\vee).$ A \textit{crystal} associated with $(A,\Pi,\Pi^\vee,P,P^\vee)$ is a set $\B$ together with maps
\begin{align*}
\mathrm{wt} : \B &\longrightarrow P,\\
\tilde e_i, \tilde f_i : \B &\longrightarrow \B \cup \{0\} \qquad \quad\ (i \in \I),\\
\varepsilon_i,\varphi_i : \B &\longrightarrow \Z \cup \{- \infty\} \qquad (i \in \I),
\end{align*}
satisfying the following properties for all $i \in \I$:
\begin{enumerate}
\item $\varphi_i(b)=\varepsilon_i(b)+ \langle h_i, \mathrm{wt} (b) \rangle,$
\item $\mathrm{wt}(\tilde e_ib)= \mathrm{wt}b+\alpha_i$ if $\tilde{e_i}b \in \B,$
\item $\mathrm{wt}(\tilde f_ib)= \mathrm{wt}b-\alpha_i$ if $\tilde{f_i}b \in \B,$
\item $\varepsilon_i(\tilde e_ib)= \varepsilon_i(b)-1$ if $\tilde{e_i}b \in \B,$
\item  $\varphi_i(\tilde e_ib)= \varphi_i(b)+1$ if $\tilde{e_i}b \in \B,$
\item $\varepsilon_i(\tilde f_ib)= \varepsilon_i(b)+1$ if $\tilde{f_i}b \in \B,$
\item  $\varphi_i(\tilde f_ib)= \varphi_i(b)-1$ if $\tilde{f_i}b \in \B,$
\item $\tilde{f_i}b=b'$ if and only if $b= \tilde{e_i}b'$ for $b,b' \in B,$
\item if $\varphi_i(b)= - \infty$ for $b \in \B,$ then $\tilde{e_i}b= \tilde{f_i}b=0.$
\end{enumerate}
\end{defn}
In particular, if $(\mathcal L, \B)$  is a crystal base, then $\B$ is a crystal.

\medskip
Let $\B_1$  and 
$\B_2$  be two crystals. 
A \textit{crystal morphism} between $\B_1$ and $\B_2$ is a map 
$\Psi: \B_1 \cup \{0\} \rightarrow \B_2 \cup
\{0\}$ 
such that 
\begin{itemize}
\item $\Psi(0) = 0$;
\item $\Psi$ commutes with $\mathrm{wt},\varepsilon_i,\varphi_i$ for all $i \in \I$;
\item for $b, b' \in \B_1$ such that
$\tilde f_i b = b'$ and $\Psi(b), \Psi(b') \in \B_2$, we have $\tilde
f_i \Psi(b)= \Psi(b')$, $\tilde e_i \Psi(b')=\Psi(b)$.
\end{itemize}
 A morphism $\Psi$ is said to be {\it strict} if it commutes with $\tilde e_i$, $\tilde f_i$ for all $i\in \I$.

\medskip

The theory of crystal bases behaves very nicely with respect to the tensor product of $\mathcal O^q_{\hbox {\rm \small  int}}$-modules, as can be seen in the next theorem.
\begin{theorem}\label{theorem:tensorrules}\cite[Theorem 4.4.1]{HK}
Let $M_1, M_2 \in \mathcal O_{\hbox {\rm \small  int}}$, and let $(\mathcal L_1,\B_1),(\mathcal L_2,\B_2)$ be the corresponding
crystal bases. We set $\mathcal L = \mathcal L_1 \ot_{{\mathbb A}_0} \mathcal L_2$ and $\B
= \B_1 \ot \B_2 \equiv \B_1 \times \B_2$. 
Then $(\mathcal L, \B)$ is a crystal base
of $M_1 \ot_{\mathbb C(q)} M_2$, with 
\begin{equation}\label{eq:tensrul}
\begin{array}{cc}
&\tilde e_i (b_1 \ot b_2) = \begin{cases} \tilde e_i b_1 \ot b_2 \quad & \hbox{\rm if} \ \ \varphi_i(b_1)  \geq \varepsilon_i(b_2), \\
b_1 \ot \tilde e_i b_2 \quad & \hbox{\rm if} \ \ \varphi_i(b_1) < \varepsilon_i(b_2), \end{cases}\\
&\tilde f_i (b_1 \ot b_2) = \begin{cases} \tilde f_i b_1 \ot b_2 \quad & \hbox{\rm if} \ \ \varphi_i(b_1) > \varepsilon_i(b_2), \\
b_1 \ot \tilde f_i b_2 \quad & \hbox{\rm if} \ \ \varphi_i(b_1) \leq \varepsilon_i(b_2), \end{cases}
\end{array} \end{equation}
where $b_1 \ot 0 = 0\ot b_2 = 0$ for all $b_1\in\B_1$ and $b_2\in \B_2$. Furthermore, we have
\begin{align*}
 \mathrm{wt}(b_1\ot b_2)&= \mathrm{wt} b_1+ \mathrm{wt} b_2,\\
 \varepsilon_i (b_1\ot b_2) &= \max\{\varepsilon_i(b_1),\varepsilon_i(b_1)+\varepsilon_i(b_2)-\varphi_i(b_1)\},\\
 \varphi_i (b_1\ot b_2) &= \max\{\varphi_i(b_2),\varphi_i(b_1)+\varphi_i(b_2)-\varepsilon_i(b_2)\}.
\end{align*}
\end{theorem}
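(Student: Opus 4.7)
The plan is to reduce the statement to a rank-one computation and then exploit the structure of the coproduct on $U_q(\widehat{\g})$. First, I would fix an index $i \in \I$ and observe that the subalgebra generated by $e_i, f_i, K_i^{\pm 1}$ is isomorphic to $U_{q_i}(\mathfrak{sl}_2)$. Since $M_1$ and $M_2$ lie in $\mathcal O^q_{\hbox{\rm \small int}}$, their restrictions to this subalgebra decompose into direct sums of finite-dimensional irreducible $U_{q_i}(\mathfrak{sl}_2)$-modules $V(l)$. Because the Kashiwara operators $\tilde e_i, \tilde f_i$ and the statistics $\varepsilon_i, \varphi_i$ only see the $i$-string structure, this reduces the problem to verifying the tensor product formulas on $V(l) \otimes V(m)$.

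Next, I would use the coproduct (with the conventions of Definition \ref{defn:qalg}) $\Delta(e_i) = e_i \otimes K_i^{-1} + 1 \otimes e_i$ and $\Delta(f_i) = f_i \otimes 1 + K_i \otimes f_i$ to compute the explicit action of $e_i$ and $f_i$ on a tensor product of weight vectors in $V(l) \otimes V(m)$. Writing the decomposition $u = \sum_k f_i^{(k)} u_k$ from \eqref{eq:kashiwara} for a tensor $b_1 \otimes b_2$ and tracking the $q$-binomial coefficients coming from this coproduct, one finds that modulo $q\,(\mathcal L_1 \otimes_{\mathbb A_0} \mathcal L_2)$, the action of $\tilde f_i$ is concentrated on exactly one of the two tensor factors, and the selection rule is governed by comparing the length $\varphi_i(b_1)$ of the outgoing $i$-string at $b_1$ with the length $\varepsilon_i(b_2)$ of the incoming $i$-string at $b_2$. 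The critical cases arise at the inequality $\varphi_i(b_1) = \varepsilon_i(b_2)$, where the computation in the $\mathfrak{sl}_2$-setting shows that $\tilde e_i$ picks the left factor while $\tilde f_i$ picks the right one, giving rise to the strict versus non-strict inequalities in \eqref{eq:tensrul}.

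Once the formulas for $\tilde e_i$ and $\tilde f_i$ are established, the remaining identities follow formally. The weight formula $\mathrm{wt}(b_1 \otimes b_2) = \mathrm{wt}\,b_1 + \mathrm{wt}\,b_2$ is immediate from $\Delta(q^h) = q^h \otimes q^h$, and the identities for $\varepsilon_i(b_1 \otimes b_2)$ and $\varphi_i(b_1 \otimes b_2)$ can be read off by tracking how long one can apply $\tilde e_i$ or $\tilde f_i$: indeed, the tensor product rule implies that the $i$-strings in $\B_1 \otimes \B_2$ are obtained by a bracket-matching procedure on strings from $\B_1$ and $\B_2$, and the maxima in the stated formulas express exactly the number of unmatched arrows on each side.

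The main obstacle is the explicit $U_{q_i}(\mathfrak{sl}_2)$ computation: one must verify that the proposed action of $\tilde e_i, \tilde f_i$ on $\B_1 \otimes \B_2$ (a) preserves the lattice $\mathcal L_1 \otimes_{\mathbb A_0} \mathcal L_2$, and (b) agrees, modulo this lattice, with the operators defined intrinsically from the $U_q(\widehat{\g})$-module structure on $M_1 \otimes_{\mathbb C(q)} M_2$. This step requires careful manipulation of $q$-binomial identities (using the notation $[k]_{q_i}!$ and $\langle\,\cdot\,\rangle_{q_i}$ introduced before Definition \ref{defn:qalg}) and an application of the Clebsch-Gordan decomposition for $V(l) \otimes V(m)$; all subsequent steps are comparatively routine book-keeping.
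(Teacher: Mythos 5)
This statement is not proved in the paper at all: it is Kashiwara's tensor product rule, imported verbatim as \cite[Theorem~4.4.1]{HK}, so there is no internal proof to compare against. Your outline --- restricting to the subalgebra generated by $e_i, f_i, K_i^{\pm 1}$, reducing to finite-dimensional $U_{q_i}(\mathfrak{sl}_2)$-modules, computing with the coproduct on $V(l)\otimes V(m)$, and recovering the $\varepsilon_i,\varphi_i$ formulas by the bracketing/signature argument --- is essentially the standard proof given in that reference, and the step you single out as the main obstacle (showing $\mathcal L_1\otimes_{\mathbb A_0}\mathcal L_2$ is preserved and that the proposed operators agree with the intrinsic ones modulo $q\mathcal L$) is precisely where all the substantive work lies there; as a plan it is the right one, but that computation is the theorem, so the proposal should be read as a correct strategy rather than a complete argument.
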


\medskip
The last tool we need in this paper is the notion of energy function, defined as follows. 
\begin{defn} \cite[Definition 10.2.1]{HK} Let $M \in \mathcal O^q_{\hbox {\rm \small  int}}$ be a module, and $(\mathcal L,\B)$ be the corresponding crystal base.
An {\it energy function} on $\B \ot \B$ is a map $H: \B \ot \B \rightarrow
\Z$ satisfying
\begin{equation}\label{eq:ef} H\left(\tilde e_i (b_1 \ot b_2)\right)
= \begin{cases} H(b_1 \ot b_2) & \qquad \hbox{\rm if} \ \ i \neq 0, \\
H(b_1 \ot b_2) + 1 & \qquad \hbox{\rm if} \ \ i = 0 \ \hbox{\rm and} \ \varphi_0(b_1)  \geq \varepsilon_0(b_2)  \\
H(b_1 \ot b_2) - 1 & \qquad \hbox{\rm if} \ \ i = 0 \ \hbox{\rm and} \ \varphi_0(b_1)< \varepsilon_0(b_2),
\end{cases} \end{equation}
for all $i \in \I$ and $b_1,b_2$ with $\tilde e(b_1 \ot b_2) \neq 0$.
\end{defn}
By definition, in the crystal graph of $\B \ot \B$, the value of $H(b_1\ot b_2)$, when it exists, determines all the values $H(b'_1\ot b'_2)$ for vertices $b'_1\ot b'_2$ in the same connected component as $b_1\ot b_2$.
Note that the conditions \eqref{eq:ef} are equivalent to the following:
\begin{equation}\label{eq:enfct}
\begin{array}{c}
H\left(\tilde e_i (b_1 \ot b_2)\right)
= \begin{cases}
H(b_1 \ot b_2) + \chi(i=0) & \qquad \hbox{\rm if} \ \ \varphi_i(b_1)  \geq \varepsilon_i(b_2)  \\
H(b_1 \ot b_2) - \chi(i=0) & \qquad \hbox{\rm if} \ \ \varphi_i(b_1)< \varepsilon_i(b_2),
\end{cases} \\
H(\tilde f_i (b_1 \ot b_2))
= \begin{cases}
H(b_1 \ot b_2) - \chi(i=0) & \qquad \hbox{\rm if} \ \ \varphi_i(b_1)  >\varepsilon_i(b_2)  \\
H(b_1 \ot b_2) + \chi(i=0) & \qquad \hbox{\rm if} \ \ \varphi_i(b_1)\leq  \varepsilon_i(b_2).
\end{cases} 
\end{array}
\end{equation}

Figure  \ref{fig:exA1} gives the crystal graph $\B$ of the vector representation of $A_{1}^{(1)}$ \cite[10.5.2]{HK}, the tensor product $\B\ot\B$, and an energy function $H$ on $\B\ot\B$.
\begin{fig}
\label{fig:exA1}
\[\]
\begin{center}
\begin{tikzpicture}[thick,scale=0.8, every node/.style={scale=0.8}]
\draw (-2,0) node {$\B$ :};
\draw (0,0) node {$0$};
\draw (1.5,0) node {$1$};
\draw (-2.5,-1.5) node {$\B\ot\B$ :};
\draw (0,-1.5) node {$0$};
\draw (1.5,-1.5) node {$1$};
\draw (0,-3) node {$1$};
\draw (1.5,-3) node {$1$};
\draw (-0.8,-3) node {$0$};
\draw (2.3,-3) node {$1$};
\draw (-0.8,-1.5) node {$0$};
\draw (2.3,-1.5) node {$0$};
\foreach \x in {0,1}
\foreach \y in {0,1.5,3}
\draw (-0.2+1.5*\x,0.2-\y)--(0.2+1.5*\x,0.2-\y)--(0.2+1.5*\x,-0.2-\y)--(-0.2+1.5*\x,-0.2-\y)--(-0.2+1.5*\x,0.2-\y);
\foreach \x in {0,1}
\foreach \y in {1.5,3}
\draw (-1+3.1*\x,0.2-\y)--(-0.6+3.1*\x,0.2-\y)--(-0.6+3.1*\x,-0.2-\y)--(-1+3.1*\x,-0.2-\y)--(-1+3.1*\x,0.2-\y);
\foreach \x in {0,1.5}
\draw [thick,->] (0.3,-\x)--(1.2,-\x);
\draw [thick,<-] (0.3,-3)--(1.2,-3);
\draw [thick,<-] (-0.4,-1.7)--(-0.4,-2.8);
\draw [thick,->] (1.9,-1.7)--(1.9,-2.8);

\foreach \x in {0,1}
\foreach \y in {1.5,3}
\draw (-0.4+2.3*\x,-\y) node {$\ot$};

\draw [->] (1.4,-0.3) arc (-60:-120:1.3);

\draw (0.75,0.15) node {\footnotesize{$1$}};
\draw (0.75,-1.35) node {\footnotesize{$1$}};
\draw (0.75,-3.2) node {\footnotesize{$0$}};
\draw (2,-2.25) node {\footnotesize{$1$}};
\draw (-0.5,-2.25) node {\footnotesize{$0$}};
\draw (0.75,-0.6) node {\footnotesize{$0$}};

\draw (5,-1.5) node {$H$:};

\foreach \x in {0,1}
\draw (5.8+0.8*\x,-1.3 )--(6.2+0.8*\x,-1.3)--(6.2+0.8*\x,-1.7)--(5.8+0.8*\x,-1.7)--(5.8+0.8*\x,-1.3);
\draw (6.4,-1.5) node {$\ot$};
\draw (6,-1.5) node {$i$};
\draw (6.8,-1.5) node {$j$};
\draw ( 7.5 ,-1.55) node {$\mapsto$};
\draw ( 7.8 ,-1.5) node[right] {$\chi(i\geq j)$};
\end{tikzpicture}
\end{center}
\end{fig}
\subsection{Perfect crystals} The theory of perfect crystals was developed by Kang, Kashiwara, Misra, Miwa, Nakashima, and Nakayashiki \cite{KMN2a,KMN2b} to study the irreducible highest weight modules over quantum affine algebras. Indeed, perfect crystals provide a construction of the crystal base $\B(\lambda)$ of any irreducible $U_q(\widehat \g)$-module
$L(\lambda)$ corresponding to a classical weight  $\lambda \in \bar P^+$. 
We call \textit{affine crystal} an abstract crystal associated with an affine Cartan datum $(A,\Pi,\Pi^\vee,P,P^\vee)$ (quantum algebra $U_q(\widehat \g)$), while the term \textit{classical crystal} is used for an abstract crystal
associated to the classical Cartan datum $(A,\Pi,\Pi^\vee, \bar P,\bar P^\vee)$ (quantum algebra $U'_q(\widehat \g)$ defined in Definition \ref{defn:subalgebra}). 

All the theorems in this section are due to Kang, Kashiwara, Misra, Miwa, Nakashima, and Nakayashiki, but we give references to the book \cite{HK} for reader's convenience.

Let us start by defining perfect crystals.
\begin{defn}\label{defn:pc}\cite[Definition 10.5.1]{HK}   For a positive integer $\ell$, a finite
classical crystal $\B$ is said to be a {\it perfect crystal of
level $\ell$}  for the quantum affine algebra $U_q(\widehat \g)$ if
\begin{itemize}
\item[{\rm (1)}]  there is a finite-dimensional $U_q'(\widehat \g)$-module
with a crystal base whose crystal graph is isomorphic to
\, $\B$;
\item[{\rm (2)}]  $\B \otimes \B$ \,  is connected;
\item[{\rm (3)}]  there exists a classical weight \,$\lambda_0$\,
such that

$$\mathrm{wt}(\B) \subset \lambda_0 + \frac{1}{d_0} \sum_{i \neq 0} \Z_{\leq 0} \alpha_i
\quad \hbox{\rm and} \quad  |\B_{\lambda_0}| = 1;$$

\item[{\rm (4)}]  for any $b \in \B$, we have
$$\langle c,\varepsilon(b)\rangle
= \sum_{i=0}^{n-1} \varepsilon_i(b) \Lambda_i(c)  \geq \ell;$$

\item[{\rm (5)}]   for each $\lambda \in \bar P_\ell^+ := \{ \mu \in \bar P^+  \mid \langle c, \mu \rangle = \ell\}$, there exist unique
vectors $b^\lambda$ and $b_\lambda$  in $\B$ such that
$\varepsilon(b^\lambda) = \lambda$ and $\varphi(b_\lambda) = \lambda$.
\end{itemize}
\end{defn}
In the remainder of this section, we fix a perfect crystal $\B.$

The maps $\lambda \mapsto \varepsilon(b_\lambda)$ and $\lambda\mapsto \varphi(b^\lambda)$ then define two bijections on $\bar P_\ell^+$. 

As a consequence of the last condition, for any $\lambda \in \bar P^+_\ell$, the vertex operator theory \cite[(10.4.4)]{HK} leads to a natural crystal isomorphism
\begin{gather}\label{defn:vertex} \mathcal B(\lambda) \ \iso \  \mathcal B(\varepsilon(b_{\lambda})) \ot \mathcal B \\
  \quad u_{\lambda} \, \mapsto \ \ u_{\varepsilon(b_{\lambda})}  \ot b_{\lambda}  .\nonumber 
\end{gather}

\begin{defn}  For $\lambda \in \bar P^+_\ell$,   the {\it ground state path of weight} $\lambda$ is the tensor product
 $${\p}_\lambda = \,\bigl(g_k)_{k=0}^\infty \,= \  \  \cdots \ot g_{k+1} \ot g_k \ot \cdots \ot g_1 \ot g_0,$$
 where the elements $g_k \in \B$ are such that 
 \begin{equation}\label{eq:lamb}\begin{array} {ccc}
\lambda_0 = \lambda &\qquad g_0 = b_\lambda &  \\
\lambda_{k+1} = \varepsilon(b_{\lambda_k})
 &\qquad \, g_{k+1} = b_{\lambda_{k+1}}  & \qquad  \hbox{\rm for all}\ \
k  \geq 0\,\cdot\end{array} \end{equation}
A tensor product $\p = (p_k)_{k=0}^\infty =  \cdots \ot p_{k+1} \ot p_k \ot \cdots \ot p_1 \ot p_0$ of elements $p_k \in \B$ is said to be a $\lambda$-{\it path} if
$p_k = g_k$ for $k$ large enough.  
\end{defn}

Iterating the isomorphism \eqref{defn:vertex}, we obtain
$$\begin{array}{ccccccc}
  \mathcal B(\lambda) &\iso& \mathcal B(\lambda_1) \ot \mathcal B &\iso& \mathcal B(\lambda_2) \ot \mathcal B \ot \mathcal B &\iso &\cdots    \\  
  u_\lambda &\mapsto& u_{\lambda_1} \ot g_0 & \mapsto & u_{\lambda_2} \ot g_1 \ot g_0 &\mapsto &\ \ \cdots  ,
\end{array}$$
which gives a natural bijection stated in the next theorem. 
  
 \begin{theorem}\label{theorem:cryiso}\cite[Theorem 10.6.4]{HK} 
 Let $\lambda \in  \bar P^+_\ell$.
 Then there is a crystal isomorphism
 \begin{align*}
 \B(\lambda)  &\iso \mathcal P(\lambda)\\
 u_\lambda &\mapsto \p_\lambda
 \end{align*}
 between the crystal base $\B(\lambda)$ of $L(\lambda)$ and the set $\mathcal P(\lambda)$ of $\lambda$-paths.
 \end{theorem}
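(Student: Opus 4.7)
The plan is to iterate the vertex operator isomorphism \eqref{defn:vertex} and pass to a stable limit. Applying \eqref{defn:vertex} repeatedly produces, for every $k \geq 0$, a crystal isomorphism
$$\Phi_k \colon \B(\lambda) \iso \B(\lambda_k) \ot \B^{\ot k}, \qquad u_\lambda \mapsto u_{\lambda_k} \ot g_{k-1} \ot \cdots \ot g_0,$$
exactly as in the chain displayed just before the theorem. These $\Phi_k$ form a compatible family: the isomorphism $\B(\lambda_k) \iso \B(\lambda_{k+1}) \ot \B$ sends $u_{\lambda_k}$ to $u_{\lambda_{k+1}} \ot g_k$, so $\Phi_{k+1}$ is obtained from $\Phi_k$ by splitting off one more factor on the left of the leftmost tensorand.

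I would then define the candidate map $\Psi \colon \B(\lambda) \to \Pp(\lambda)$ by sending $b \in \B(\lambda)$ to the tensor sequence $\cdots \ot p_1 \ot p_0$, where for each sufficiently large $k$ one has $\Phi_k(b) = u_{\lambda_k} \ot p_{k-1} \ot \cdots \ot p_0$. The compatibility above forces the tail of this sequence to coincide with the ground state path $\p_\lambda$, so $\Psi(b)$ is genuinely a $\lambda$-path, and in particular $\Psi(u_\lambda) = \p_\lambda$.

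The main obstacle is the underlying stabilisation claim: for every $b \in \B(\lambda)$ there exists $N$ such that for all $k \geq N$ the leftmost tensor factor of $\Phi_k(b)$ equals $u_{\lambda_k}$. I would prove this by writing $b = \tilde x_r \cdots \tilde x_1 u_\lambda$ as a finite word in the Kashiwara operators and inducting on $r$, using the tensor product rule \eqref{eq:tensrul} to propagate the action through $\Phi_k(u_\lambda) = u_{\lambda_k} \ot g_{k-1} \ot \cdots \ot g_0$. Because $u_{\lambda_k}$ is a highest weight vector with $\varepsilon_i(u_{\lambda_k}) = 0$ and $\varphi_i(u_{\lambda_k}) = \lambda_k(h_i)$, the action stays in the right-hand factors whenever $\lambda_k(h_i)$ exceeds the corresponding $\varepsilon_i$ value of the portion already modified. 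The finiteness of $\B$ together with condition $(4)$ in Definition \ref{defn:pc} bounds this $\varepsilon_i$ uniformly and lets one choose $N$ past which no Kashiwara operator in the word reaches the leftmost factor.

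Once stabilisation is in hand, showing that $\Psi$ is an isomorphism of crystals is then routine: each $\Phi_k$ commutes with $\mathrm{wt}$, $\varepsilon_i$, $\varphi_i$, $\tilde e_i$, $\tilde f_i$, and in the stable range the tensor product formulas of Theorem \ref{theorem:tensorrules} on $\B(\lambda_k) \ot \B^{\ot k}$ translate to the analogous formulas applied to the $\lambda$-path. Injectivity is inherited from each $\Phi_k$, and surjectivity follows by reversing the procedure: a $\lambda$-path $(p_k)_{k\geq 0}$ with $p_k = g_k$ for all $k \geq N$ has a unique preimage $\Phi_N^{-1}(u_{\lambda_N} \ot p_{N-1} \ot \cdots \ot p_0)$ in $\B(\lambda)$, and the compatibility of the $\Phi_k$ ensures this preimage is independent of $N$.
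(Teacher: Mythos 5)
You have the right architecture (iterate the isomorphism \eqref{defn:vertex}, stabilise, pass to the limit), and this is indeed the standard route: note that the paper itself does not prove this statement at all — it is imported from \cite[Theorem 10.6.4]{HK} (due to Kang--Kashiwara--Misra--Miwa--Nakashima--Nakayashiki), with the displayed chain of isomorphisms serving only as motivation — so the comparison is with that standard proof rather than with anything in the paper.

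The genuine gap is in your justification of the stabilisation lemma, which is the heart of the argument. By the tensor rule \eqref{eq:tensrul}, $\tilde f_i(b_1\ot b_2)=\tilde f_ib_1\ot b_2$ precisely when $\varphi_i(b_1)>\varepsilon_i(b_2)$; so if $\varphi_i(u_{\lambda_k})=\lambda_k(h_i)$ \emph{exceeds} the $\varepsilon_i$-value of the right-hand part, then $\tilde f_i$ hits the leftmost factor — exactly the opposite of what you assert. Likewise, condition (4) of Definition \ref{defn:pc} is a \emph{lower} bound on $\langle c,\varepsilon(b)\rangle$, and finiteness of $\B$ only bounds $\varepsilon_i$ of single factors; neither controls $\varepsilon_i$ of the long right-hand tensorand in the direction you need, so no choice of $N$ based on such a bound protects the leftmost factor. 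What actually protects it is the defining recursion of the ground state path, $\varphi(g_{k})=\lambda_k=\varepsilon(g_{k-1})$ from \eqref{eq:lamb}: it gives $\varphi_i(u_{\lambda_k})=\varepsilon_i(g_{k-1})\leq\varepsilon_i(g_{k-1}\ot\cdots\ot p_0)$ as long as the slot adjacent to $u_{\lambda_k}$ still carries $g_{k-1}$, so $\tilde f_i$ never reaches the leftmost factor, while $\tilde e_i$ can reach it only in the equality case, where it returns $0$ and is excluded because every intermediate vector in your word $\tilde x_r\cdots\tilde x_1u_\lambda$ is nonzero. One then shows by induction, using the same identity $\varphi_i(g_{j+1})=\varepsilon_i(g_j)\leq \varepsilon_i(g_j\ot\cdots)$ at each slot, that a single Kashiwara operator pushes the disturbed region left by at most one slot, so $N$ larger than the length of the word suffices. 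Finally, the step you call ``routine'' — that the limiting map intertwines the crystal structure on $\Pp(\lambda)$ as defined after the theorem via the \emph{minimal} $N$ — is not routine: it is precisely the consistency statement that acting on a truncation $g_M\ot\cdots\ot p_0$ is independent of $M\geq N$, which again rests on $\varphi(g_{N+1})=\varepsilon(g_N)$ and on the vanishing argument for $\tilde e_i$ in the equality case; this verification is the substance of the KMN$^2$ proof and needs to be spelled out rather than asserted.
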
  

We describe the crystal structure of $\mathcal P(\lambda)$ as follows \cite[(10.48)]{HK}.   
For any $\p = (p_k)_{k=0}^\infty \in \mathcal P(\lambda)$, let $N \geq 0$ be the smallest integer such that  $p_k = g_k$ for all $k  \geq N$. We then set
\begin{align*}
\wt \p &= \lambda_N + \sum_{k=0}^{N-1} \wt p_k,\\
\tilde e_i \p\  &= \ \ \cdots \ot g_{N+1} \ot \tilde e_i\left( g_N \ot \cdots \ot p_0 \right), \\
\tilde f_i \p \ &= \ \ \cdots \ot g_{N+1} \ot \tilde f_i\left( g_N \ot \cdots \ot p_0 \right), \\
\varepsilon_i(\p) &= \max\left( \varepsilon_i(\p')-\varphi_i(g_N), 0\right),  \\
\varphi_i(\p) &= \varphi_i(\p') +  \max\left(\varphi_i(g_N)-\varepsilon_i(\p'), 0\right),
\end{align*} 
where $\p' := p_{N-1} \ot \cdots \ot p_1 \ot p_0$, and
$\wt$ is viewed as the classical weight of an element
of $\B$ or $\mathcal P(\lambda)$. \m
 
The explicit expression for the affine weight  $\mathrm{wt} \p$
in $P$ is given in the following theorem, which is known as the (KMN)$^2$ crystal base character formula, and plays a key role in connecting characters with partition generating functions.

\begin{theorem}\label{theorem:wtchar}\cite[Theorem 10.6.7]{HK}  
Let $\lambda \in \bar P^+_{\ell}$, let $H$ be an energy function on $\B \ot \B$, and let  $\p = (p_k)_{k=0}^\infty \in \mathcal P(\lambda)$.
Then the weight of $\p$ and the character of the irreducible highest weight $U_q(\widehat \g)$-module
$L(\lambda)$ are given by the following expressions:
\begin{align}\label{eq:firsteq}
\mathrm{wt} \p &= \lambda + \sum_{k=0}^\infty \left(\ov{\hbox{\rm wt}p}_k -\ov{\hbox{\rm wt}g}_k\right)  -\left( \sum_{k=0}^\infty (k+1)\Big(H(p_{k+1} \ot p_k) - H(g_{k+1}\ot g_k)\Big)\right) \frac{\delta}{d_0},  \nonumber \\
         &= \lambda + \sum_{k=0}^\infty \left(\ov{\hbox{\rm wt}p}_k -\ov{\hbox{\rm wt}g}_k\right)   -\left( \sum_{l=k}^\infty (H(p_{l+1} \ot p_l) - H(g_{l+1}\ot g_l))\right) \frac{\delta}{d_0},\\
\rm ch(L(\lambda)) &= \sum_{\p \in \mathcal P(\lambda)}  e^{\ov{\hbox{\rm wt}\p}}.  \label{eq:secondeq}
\end{align}
\end{theorem}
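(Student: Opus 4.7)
The plan is to derive the character formula from the crystal isomorphism of \Thm{theorem:cryiso} once the weight formula is established, and to prove the latter by splitting the affine weight $\mathrm{wt}\,\p \in P$ into its classical projection in $\bar P$ and its depth in $\Z\,\delta/d_0$, treating each part by a separate argument.

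For the classical part, I would start from the path-weight definition $\ov{\mathrm{wt}}(\p) = \lambda_N + \sum_{k<N}\ov{\mathrm{wt}}(p_k)$ given just before the theorem, where $N$ is the smallest index past which $p_k = g_k$. From the construction of the ground-state path via \eqref{defn:vertex} and \eqref{eq:lamb}, one has $\ov{\mathrm{wt}}(g_k) = \varphi(g_k) - \varepsilon(g_k) = \lambda_k - \lambda_{k+1}$, so that $\lambda_N + \sum_{k<N}\ov{\mathrm{wt}}(g_k)$ telescopes to $\lambda_0 = \lambda$. Since $p_k = g_k$ for $k \geq N$, adding and subtracting $\ov{\mathrm{wt}}(g_k)$ termwise yields
\[
\ov{\mathrm{wt}}(\p) \;=\; \lambda + \sum_{k=0}^{\infty}\bigl(\ov{\mathrm{wt}}(p_k) - \ov{\mathrm{wt}}(g_k)\bigr),
\]
which matches the classical portion of the asserted formula.

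For the depth coefficient, I would induct on the distance from $\p$ to $\p_\lambda$ in the crystal graph of $\mathcal P(\lambda)$. The base case $\p=\p_\lambda$ is immediate. For the inductive step, assume the identity for $\p'$ and let $\p = \tilde f_i\p'$, so $\mathrm{wt}\,\p - \mathrm{wt}\,\p' = -\alpha_i$. The key algebraic fact is $\alpha_0 = \ov{\alpha}_0 + \delta/d_0$, a direct consequence of $\delta = \sum_j d_j\alpha_j$ together with $\alpha_j \in \bar P$ for $j \neq 0$. When $i \neq 0$ the classical change already accounts for the full $-\alpha_i$, and \eqref{eq:enfct} guarantees that every $H$-value is preserved. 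When $i = 0$, the classical change contributes only $-\ov{\alpha}_0$, so the energy term must supply the missing $-\delta/d_0$. Using the tensor-product rule \eqref{eq:tensrul} to locate the position $k$ at which $\tilde f_0$ acts on the finite prefix of $\p$, \eqref{eq:enfct} forces $H(p_{k+1}\ot p_k)$ to increase by $1$ and $H(p_k \ot p_{k-1})$ to decrease by $1$, while every other $H(p_{j+1}\ot p_j)$ is unchanged; hence $\sum_{j \geq 0}(j+1)H(p_{j+1}\ot p_j)$ changes by $(k+1)-k = 1$, producing exactly the required $-\delta/d_0$ shift. The case $\p = \tilde e_i\p'$ is symmetric.

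The hardest step is handling the boundary case where $\tilde e_i$ or $\tilde f_i$ acts on the ground tail $g_N$, which forces the truncation index to grow and a priori threatens the finite sums. I would circumvent this by enlarging $N$ to some $N'\gg N$ before applying the operator, so that the active tensor factor sits strictly inside the window $g_{N'}\ot\cdots\ot p_0$; the preceding accounting then applies verbatim and the extra zero terms at indices $\geq N'$ drop out. Once the weight formula is proven, transporting characters across $\B(\lambda) \iso \mathcal P(\lambda)$ of \Thm{theorem:cryiso} gives $\mathrm{ch}(L(\lambda)) = \sum_{\p\in \mathcal P(\lambda)} e^{\mathrm{wt}\,\p}$. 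The equivalence between the two forms of the depth term is the rearrangement $\sum_{k\geq 0}(k+1)a_k = \sum_{k\geq 0}\sum_{l\geq k} a_l$, legitimate because $a_l := H(p_{l+1}\ot p_l) - H(g_{l+1}\ot g_l)$ vanishes for $l$ large.
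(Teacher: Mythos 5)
The paper does not prove this statement at all: it is quoted verbatim as the (KMN)$^2$ crystal base character formula, with a citation to \cite[Theorem 10.6.7]{HK}, so there is no in-paper argument to compare yours against. Judged on its own, your sketch reproduces the standard textbook proof and is essentially correct: the classical part by telescoping $\ov{\mathrm{wt}}\,g_k=\varphi(g_k)-\varepsilon(g_k)=\lambda_k-\lambda_{k+1}$, the depth part by induction along the connected crystal $\mathcal P(\lambda)\cong \B(\lambda)$ using $\alpha_0=\ov\alpha_0+\delta/d_0$ together with the defining property \eqref{eq:enfct} of the energy function, the character formula from \Thm{theorem:cryiso}, and the rearrangement $\sum_k (k+1)a_k=\sum_k\sum_{l\geq k}a_l$ for the finitely supported $a_l$.

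The one step you assert without justification is the local compatibility that makes \eqref{eq:enfct} applicable at all: when $\tilde f_i$ applied to the whole path acts on the factor $p_k$, you need that the adjacent pairs transform by the \emph{two-factor} rule in the expected way, i.e.\ $\tilde f_i(p_{k+1}\ot p_k)=p_{k+1}\ot \tilde f_i p_k$ (so $\varphi_i(p_{k+1})\leq\varepsilon_i(p_k)$ and $H(p_{k+1}\ot p_k)$ shifts by $+\chi(i=0)$) and $\tilde f_i(p_k\ot p_{k-1})=\tilde f_i p_k\ot p_{k-1}$ (so $\varphi_i(p_k)>\varepsilon_i(p_{k-1})$ and $H(p_k\ot p_{k-1})$ shifts by $-\chi(i=0)$), while all other pairs are untouched. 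This is true, but it is exactly the content of the signature (bracketing) rule for iterated tensor products and deserves a line of proof; the same fact is also silently used in your $i\neq 0$ case to conclude that every $H$-value is preserved. Likewise, your fix for the boundary case (enlarging the truncation window $N$) relies on the standard consistency of the crystal structure on $\mathcal P(\lambda)$ with respect to the choice of truncation, which is part of the path-realization setup cited in \Thm{theorem:cryiso}. With these two points made explicit, the argument is complete; note also that in \eqref{eq:secondeq} the exponent should be the affine weight $\mathrm{wt}\,\p$ (as your transport of the character across the isomorphism indeed produces), the overline there being a typographical artefact.
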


 A specialisation of Theorem \ref{theorem:wtchar} gives the following corollary.
\begin{cor}\label{cor:difcond}  Suppose that $\Ll$ is such that $b_{\Ll}=b^{\Ll}= g$, and set $ H(g \ot g)=0$.
Then  $\ov{\rm{wt}}g = 0$, $g_k=g$ for all $k\in \mathbb{Z}_{\geq 0}$, and we have  
\begin{equation*}
\mathrm{wt}\p = \lambda + \sum_{k=0}^\infty \ov{\hbox{\rm wt}\p}_k-\left( \sum_{l=k}^\infty H(p_{l+1} \ot p_l)\right) \frac{\delta}{d_0}.
\end{equation*}
\end{cor}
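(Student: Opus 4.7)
The plan is to deduce the corollary directly from Theorem \ref{theorem:wtchar} by collapsing the ground-state contributions using the hypothesis $b_{\Ll}=b^{\Ll}=g$ together with the normalisation $H(g\ot g)=0$. The argument reduces to two elementary observations followed by a substitution. First, I would verify that $\wt g=0$: by Definition \ref{defn:pc}(5), $\varphi(b_{\Ll})=\Ll$ and $\varepsilon(b^{\Ll})=\Ll$, and by hypothesis both of these vectors coincide with $g$, so using the identity $\wt b=\varphi(b)-\varepsilon(b)$ recalled just after \eqref{eq:epsilonphi} gives $\wt g=\Ll-\Ll=0$.

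Next, I would show by induction on $k$ that the ground state path is constant equal to $g$. The base case is immediate from \eqref{eq:lamb}: $g_0=b_{\lambda_0}=b_{\Ll}=g$. For the inductive step, assuming $g_k=g$ and $\lambda_k=\Ll$, the recursion \eqref{eq:lamb} yields $\lambda_{k+1}=\varepsilon(b_{\lambda_k})=\varepsilon(g)=\Ll$, hence $g_{k+1}=b_{\Ll}=g$.

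Finally, I would substitute these two facts into the expression for $\mathrm{wt}\,\p$ supplied by Theorem \ref{theorem:wtchar}: every $\wt g_k$ vanishes since $g_k=g$ and $\wt g=0$, and every $H(g_{l+1}\ot g_l)$ equals $H(g\ot g)=0$, leaving precisely the formula stated in the corollary. The proof is pure bookkeeping; the substantive content is already packaged in Theorem \ref{theorem:wtchar} and Definition \ref{defn:pc}, so I do not expect any genuine obstacle. The only point worth flagging is that the perfect-crystal axioms simultaneously force $\varepsilon(g)=\Ll$ and $\varphi(g)=\Ll$, which is exactly what makes both simplifications (constancy of the ground state path and vanishing of $\wt g$) happen at once.
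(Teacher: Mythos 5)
Your proposal is correct and is essentially the argument the paper intends: the corollary is stated as a direct specialisation of Theorem \ref{theorem:wtchar}, and your two observations (that $\varepsilon(g)=\varphi(g)=\Ll$ forces $\ov{\rm wt}\,g=\varphi(g)-\varepsilon(g)=0$, and that the recursion \eqref{eq:lamb} then makes the ground state path constant equal to $g$) are exactly what justifies dropping the $\ov{\rm wt}\,g_k$ and $H(g_{k+1}\ot g_k)$ terms. Nothing is missing.
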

This is the main result which we will use in the next section to connect crystal base theory to integer partitions. 

Note that the $d_0$ in Theorem \ref{theorem:wtchar} and Corollary \ref{cor:difcond} did not appear in \cite{HK} and the original work \cite{KMN2a}, but this was a typo which was fixed later in \cite{KKM} for example. However, it did not affect their results, as for most classical types (including $A_{n-1}^{(1)}$), we have $d_0=1$.

\section{Perfect crystals and grounded partitions}
\label{sec:groundedpartitions}
To make the connection between our combinatorial partition identities and character formulas, we introduce in this section a new type of coloured partitions: grounded partitions.

Let $\C$ be a set of colours, and let $\Z_{\C} = \{k_c : k \in \Z, c \in \C\}$ be the set of coloured integers. First, we relax the condition that parts of (coloured) partitions have to be in non-increasing order.
\begin{defn}
Let $\gg$ be a binary relation defined on $\Z_{\C}$.
A \textit{generalised coloured partition} with relation $\gg$ is a finite sequence $(\pi_0,\ldots,\pi_s)$ of coloured integers, where for all $i \in \{0, \dots, s-1\},$ $\pi_i\gg\pi_{i+1}.$
\end{defn}
In the following, $c(\pi_i) \in \C$ denotes the colour of the part $\pi_i$. The quantity $|\pi|=\pi_0+\cdots+\pi_s$ is the weight of $\pi$, and $C(\pi) = c(\pi_0)\cdots c(\pi_s)$ is its colour sequence. 

\begin{remark}
The binary relation is not necessarily an order. When $\gg$ is a strict total order, we can easily check 
that every finite set of coloured parts defines a classical coloured partition, by ordering the parts. In the same way, for a large total order, the generalised coloured partitions are finite multi-sets of coloured integers.
\end{remark}

Let us choose a particular colour $\co$. We now define grounded partitions, which are directly related to ground state paths.
\begin{defn}
A \textit{grounded partition} with ground $\co$ and relation $\gg$ is a non-empty generalised coloured partition $\pi=(\pi_0,\ldots,\pi_s)$ with relation $\gg$, such that $\pi_s = 0_{\co}$, and when $s>0$, $\pi_{s-1}\neq 0_{\co}.$

\noindent Let $\Ppp$ denote the set of such partitions.
\end{defn}
In the following, we explicitly write $\pi=(\pi_0,\ldots,\pi_{s-1},0_{\co})$. The trivial partition in $\Ppp$ is then $(0_{\co})$.  

\begin{ex}For the set of classical integer partitions $\pi=(\pi_1,\ldots,\pi_s)$, where parts satisfy $\pi_1\geq \cdots \geq \pi_s>0$, the empty partition is such that $s=0$.
This set is in bijection 
with the set $\Pp_{c}$ of grounded coloured partitions with only one colour $c$, defined by the relation 
$$k_c\gg l_c \text{ if and only if } k-l\geq 0.$$
The bijection is given by 
$$(\pi_1,\ldots,\pi_s) \mapsto ((\pi_1)_c,\ldots,(\pi_s)_c,0_c),$$
where the empty partition $\emptyset$ corresponds to the grounded partition $(0_c)$.
\end{ex}

In the remainder of this section, we make the connection between grounded partitions and crystal base theory. Let us fix a weight $\Ll \in \bar P^+_{\ell}$ such that  $b_{\Ll}=b^{\Ll}=g$, and assume that $ H(g \ot g)=0$. Let $\C_{\B}=\{c_b: \,b\in \B\}$ be the set of colours indexed by $\B$. 
We define the binary relation $\gtrdot$ on $\Z_{\C_{\B}}$ by 
\begin{equation}\label{eq:restr}
k_{c_b}\gtrdot k'_{c_{b'}} \text{ if and only if } k-k'= H(b'\ot b).
\end{equation}
This relation leads to the following.

\begin{prop} \label{prop:flat1}
Let $\phi$ be the map between $\lambda$-paths and grounded partitions defined as follows:
\begin{equation*}
\phi : \quad \p\mapsto(\pi_0,\ldots,\pi_{s-1},0_{\co}),
\end{equation*}
where $\p=(p_k)_{k\geq 0}$ is a $\Ll$-path in $\Pp(\Ll)$, $s\geq 0$ is the unique non-negative integer such that $p_{s-1}\neq g$ and $p_k=g$ for all $k\geq s$, 
and for all $k\in \ssss$, the part $\pi_k$ has colour $c_{p_k}$ and size
\begin{equation*}
\sum_{l=k}^{s-1} H(p_{k+1}\ot p_k).
\end{equation*}
Then $\phi$ is a bijection between $\Pp(\Ll)$ and $\Pppp$. Furthermore, by taking $c_{b}=e^{\wt b}$, we have for all $\pi\in \Pppp$,  
\begin{equation}\label{eq:paca}
e^{-\Ll+\mathrm{wt}\phi^{-1}(\pi)}  = C(\pi)e^{-\frac{\delta}{d_0} |\pi|}.
\end{equation} 
\end{prop}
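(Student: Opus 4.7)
The plan is to check that $\phi$ lands in $\Pppp$, construct an inverse by reading off the elements $p_k$ from the colour sequence, and then deduce the weight formula \eqref{eq:paca} from Corollary~\ref{cor:difcond} via an exchange of summations.

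First, I would verify that $\phi(\p)\in\Pppp$. For $\p=(p_k)_{k\geq 0}\in\Pp(\Ll)$, the sizes $\pi_k=\sum_{l=k}^{s-1}H(p_{l+1}\ot p_l)$ telescope: for every $k\in\ssss$,
\[\pi_k-\pi_{k+1}=H(p_{k+1}\ot p_k),\]
with the conventions $\pi_s=0$ and $p_s=g$. Since the colour of $\pi_k$ is $c_{p_k}$, this is exactly the relation $\pi_k\gtrdot\pi_{k+1}$ from \eqref{eq:restr}. The final part $\pi_s=0_{\co}$ is correct by construction, and when $s>0$ the minimality of $s$ forces $p_{s-1}\neq g$, so the colour of $\pi_{s-1}$ is not $\co$ and therefore $\pi_{s-1}\neq 0_{\co}$.

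Next, I would construct the inverse. Given $\pi=(\pi_0,\ldots,\pi_{s-1},0_{\co})\in\Pppp$, the bijection $b\leftrightarrow c_b$ between $\B$ and $\C_{\B}$ lets us recover $p_k\in\B$ from the identity $c(\pi_k)=c_{p_k}$ for $k<s$; setting $p_k=g$ for $k\geq s$ yields a $\Ll$-path. The condition $\pi_{s-1}\neq 0_{\co}$ (when $s>0$) forces $p_{s-1}\neq g$, so the integer $s$ read off $\pi$ coincides with the tail index from the definition of $\phi$. Telescoping $\pi_l-\pi_{l+1}=H(p_{l+1}\ot p_l)$ from $l=k$ to $l=s-1$ recovers $\pi_k=\sum_{l=k}^{s-1}H(p_{l+1}\ot p_l)$, so this map is a two-sided inverse of $\phi$.

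Finally, for the weight identity I would apply Corollary~\ref{cor:difcond}. Using $\wt g=0$ and $H(g\ot g)=0$ to truncate all infinite sums at $k=s-1$ gives
\[\mathrm{wt}\p-\Ll=\sum_{k=0}^{s-1}\wt p_k \; - \; \left(\sum_{k=0}^{s-1}(k+1)H(p_{k+1}\ot p_k)\right)\frac{\delta}{d_0}.\]
Exchanging the order of summation in $|\pi|=\sum_{k=0}^{s-1}\sum_{l=k}^{s-1}H(p_{l+1}\ot p_l)$ yields $|\pi|=\sum_{l=0}^{s-1}(l+1)H(p_{l+1}\ot p_l)$, so the parenthesised sum above equals $|\pi|$. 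Since $c_b=e^{\wt b}$ and $\wt g=0$, the colour product satisfies $C(\pi)=\prod_{k=0}^{s-1}e^{\wt p_k}\cdot e^{\wt g}=e^{\sum_{k=0}^{s-1}\wt p_k}$, which combined with the displayed formula gives $e^{-\Ll+\mathrm{wt}\p}=C(\pi)\,e^{-|\pi|\delta/d_0}$, as required. No deep obstacle arises; the argument is essentially careful bookkeeping once Corollary~\ref{cor:difcond} is invoked. The subtlest point is the boundary case $s=0$, corresponding to the ground-state path $\p=(g,g,\dots)$ and the trivial partition $(0_{\co})$, which is handled by the empty-sum convention so that both sides of \eqref{eq:paca} reduce to $1$.
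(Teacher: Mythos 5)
Your proposal is correct and follows essentially the same route as the paper: check $\phi$ lands in $\Pppp$ via the telescoping relation \eqref{eq:restr}, recover $p_k$ from the colours to invert (where, as in the paper, the point that $p_{s-1}\neq g$ rests on $H(g\ot g)=0$ together with the relation $\gtrdot$, which you should state explicitly rather than just assert), and deduce \eqref{eq:paca} from Corollary \ref{cor:difcond}. Your use of the $(k+1)$-coefficient form plus an exchange of summation is just the trivially equivalent rewriting of the form the paper truncates directly, so there is no substantive difference.
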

\m
\begin{proof}
 It is easy to see that $\phi(\p)$ belongs to $\Pppp$, since by \eqref{eq:restr} we have $\pi_k\gtrdot \pi_{k+1} $ for all $k\in\ssss$, and 
$p_{s-1}\neq g$ implies that $\pi_{s-1}\neq 0_{\co}$. Note that the ground state path $\cdots \ot g \ot \ot g \ot g$ is associated to $(0_{\co})$.

Let us now give the inverse bijection. Start with $\pi\in (\pi_0,\ldots,\pi_{s-1},0_{c_{g}})\in \Pppp$, different from $(0_{\co})$, 
with colour sequence $c_{p'_0}\cdots c_{p'_{s-1}} \co$. Recall that $\pi_s=0_{\co}$. We set $\phi^{-1}(\pi) = (p_k)_{k\geq 0}$, where $p_k= g$ for all $k\geq s$ and $p_k= p'_k$ for all $k\in \ssss$.
\begin{itemize}
 \item We first show that $p_{s-1}\neq g$. Assume for the purpose of contradiction that $p_{s-1}=g$. By \eqref{eq:restr}, we know that $\pi_{s-1}\gtrdot 0_{\co}$ if and only if $$\pi_{s-1}-0_{\co}= H(p_s\ot  p_{s-1})=H(g\ot g) = 0,$$
i.e. if and only if $\pi_{s-1}= 0_{\co}$. This contradicts the fact that $\pi_{s-1}\neq 0_{\co}$. 
 \item By \eqref{eq:restr}, we also have, for all $k\in \ssss$, $\pi_k-\pi_{k+1}=H(p_{k+1}\ot p_k).$ Therefore
$$\pi_k = \pi_k-0_{\co} = \sum_{l=k}^{s-1}\pi_l-\pi_{l+1}= \sum_{l=k}^{s-1} H(p_{l+1}\ot p_l).$$
\end{itemize}
With what precedes, we have $\phi(\phi^{-1}(\pi))=\pi$ and $\phi^{-1}(\phi(\p))=\p$. We obtain \eqref{eq:paca} by Corollary \ref{cor:difcond} and by observing that
\[\pi_k =\sum_{l=k}^{s-1} H(p_{l+1}\ot p_l)=\sum_{l=k}^\infty H(p_{l+1}\ot p_l),\]
since $H(p_{l+1}\ot p_l)=H(g \ot g) =0$ for all $l\geq s$.
\end{proof}
\begin{ex}
Let us consider the energy matrix \eqref{Primcmatrix4} given by Primc for the case $A_1^{(1)}$, with the correspondence $c_{v_j\ot \vv_i} = a_ib_j$ for all $i,j\in \{0,1\}$. Let us set the ground $g=v_0\ot \vv_0$ corresponding to the classical weight $\Lambda_0$, so that $\co = a_0b_0$. 
\begin{itemize}
\item The ground state path $\p_{\Lambda_0} = \cdots \ot (v_0\ot \vv_0) \ot (v_0\ot \vv_0)$ corresponds to the partition $\phi(\p_{\Lambda_0})=(0_{a_0b_0})$.
\item For $\p = \cdots \ot (v_0\ot \vv_0) \ot (v_0\ot \vv_0) \ot ( v_1\ot \vv_0) \ot (v_0\ot \vv_1) \ot (v_1\ot \vv_1),$ we have 
$$\phi(\p) = (3_{a_1b_1},3_{a_1b_0},1_{a_0b_1},0_{a_0b_0}).$$
\end{itemize}
\end{ex}
The next proposition allows us to describe the set $\Ppp$ of grounded partitions for the relation $\gg$ defined by
\begin{equation}\label{eq:relation}
 k_{c_b}\gg k'_{c_{b'}} \text{ if and only if } k-k'\geq H(b'\ot b).
\end{equation}
We refer to this relation as \textit{minimal difference conditions}.
One can view the partitions of $\Pppp$ as the partitions of $\Ppp$ such that the differences between consecutive parts are minimal.
Note that contrarily to $\Pppp$, the set $\Ppp$ has some partitions $\pi = (\pi_0,\ldots,\pi_{s-1},0_{\co})$ such that $c(\pi_{s-1})= \co$. 
For this reason, the set $\Pppp$ is not exactly the set of all minimal partitions of $\Ppp$, but is still related to it.

\begin{prop}\label{prop:diff}
Recall that $\Pp_{\co}$ is the set of grounded partitions where all parts have colour $\co$. There is a bijection $\Phi$ between $\Ppp$ and $\Pppp\times \Pp_{\co}$, such that if $\Phi(\pi) =(\mu,\nu)$, then $|\pi|=|\mu|+|\nu|$, and by setting $\co=1$, we have $C(\pi)=C(\mu)$.
\end{prop}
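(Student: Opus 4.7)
The idea is to decompose any $\pi \in \Ppp$ into a strict-minimal skeleton carrying all the colour information, plus a monochromatic partition carrying the excess. Concretely, given $\pi = (\pi_0, \ldots, \pi_{s-1}, 0_{\co}) \in \Ppp$, I define an auxiliary ``saturated'' sequence of values $(\tilde{\mu}_0, \ldots, \tilde{\mu}_{s-1}, 0)$ with the \emph{same} colour sequence as $\pi$, by the recursion $\tilde{\mu}_s = 0$ and $\tilde{\mu}_i = \tilde{\mu}_{i+1} + H(p_{i+1} \otimes p_i)$, where $p_i \in \B$ has colour $c(\pi_i)$. I then set $\nu_i := \pi_i - \tilde{\mu}_i$ for $0 \le i \le s$. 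The inequalities defining $\Ppp$ together with the equalities defining $\tilde{\mu}$ guarantee that $\tilde{\mu}$ and $\nu$ are non-increasing sequences of non-negative integers ending in $0$.

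The partition $\mu$ is obtained from $\tilde{\mu}$ by deleting all trailing parts that are $\co$-coloured with value $0$ (keeping one copy of $0_{\co}$ as terminal), and $\nu$ is obtained from the sequence $(\nu_0, \ldots, \nu_{s-1}, 0_{\co})$ by the analogous trimming of trailing zeros. The key observation is that if $j^\ast$ denotes the last non-$\co$ position of $\pi$, then for all $i > j^\ast$ the colour $c(\pi_i) = \co$ forces $\tilde{\mu}_i = 0$ (because $H(g \otimes g) = 0$ and all successive differences vanish); hence trimming removes exactly the positions $j^\ast < i < s$ from $\tilde{\mu}$, and the surviving last non-terminal part of $\mu$ has non-$\co$ colour, so $\mu \in \Pppp$. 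For $\nu$, the trimming is standard, and one checks $\nu \in \Pp_{\co}$.

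For the inverse $\Phi^{-1}$, I set the length of $\pi$ to be $\max$ of the lengths of $\mu$ and $\nu$, pad the shorter one by inserting $0_{\co}$'s just before its terminal part, and put $\pi_i := \tilde{\mu}_i + \nu_i$ with colours taken from $\mu$ in its non-$\co$ range and $\co$ elsewhere. One verifies $\pi \in \Ppp$ by noting $\tilde{\mu}_i - \tilde{\mu}_{i+1} = H(p_{i+1} \otimes p_i)$ (by construction, and $=0$ on padded $\co$-$\co$ stretches), while $\nu_i - \nu_{i+1} \ge 0$, so their sum exceeds the minimal difference. The requirement $\pi_{s-1} \neq 0_{\co}$ holds automatically: either the last non-terminal position carries a colour from $\mu$ (non-$\co$, hence $\neq 0_{\co}$), or it is a padded $\co$-position whose value equals $\nu_{s-1} > 0$ by the $\Pp_\co$ condition on $\nu$.

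The weight identity $|\pi| = |\mu| + |\nu|$ is immediate once one notes that trimming only removes zero-valued parts. The colour identity $C(\pi) = C(\mu)$ with $\co=1$ follows because the extra positions of $\pi$ beyond those in $\mu$ all carry colour $\co$ and contribute trivially to the product. The main subtle point, which I expect to require the most care, is verifying that the ``maximum length'' convention in $\Phi^{-1}$ is consistent, i.e.\ that $\Phi$ and $\Phi^{-1}$ are mutually inverse in the boundary cases where $\pi \in \Pppp$ (whence $\nu = (0_{\co})$) or where all non-terminal colours of $\pi$ are $\co$ (whence $\mu = (0_{\co})$); in each case one has to match lengths and trim consistently to avoid the apparent collisions that arise if one naively identifies $\mu$ with the subsequence of non-$\co$ parts of $\pi$.
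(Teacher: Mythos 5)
Your proposal is correct and follows essentially the same route as the paper: your saturated sequence $\tilde{\mu}_i=\sum_{l\geq i}H(p_{l+1}\ot p_l)$ is exactly the paper's $\mu=\phi(\p)$ from Proposition \ref{prop:flat1}, and taking $\nu=\pi-\tilde{\mu}$ with the trim/pad length bookkeeping reproduces the paper's case split between $r$ and $t$. (Your only inessential over-claim is that $\tilde{\mu}$ itself is non-increasing and non-negative, which would require $H\geq 0$; the argument only needs this for $\nu$, where it follows from the $\gg$ inequalities.)
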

\begin{proof}
We set $\Phi(0_{\co})=((0_{\co}),(0_{\co}))$. Let us now consider any $\pi=(\pi_0,\ldots,\pi_{s-1},0_{c_{g}})\in \Ppp$, 
different from $(0_{\co})$, with colour sequence $c_{p'_0}\cdots c_{p'_{s-1}} \co$, and build $\Phi(\pi) =(\mu,\nu).$ Recall that $\pi_{s-1}\neq \pi_s= 0_{\co}$. Let us set  $ \p= (p_k)_{k\geq 0}$, with $p_k= g$ 
for all $k\geq s$ and $p_k= p'_k$ for all $k\in \ssss$, and set
$$r=\max\{k\in \{0,\ldots,s\}: p_{k-1}\neq g\}.$$
Since $p_k=g$ for all $k\geq r$, with the convention $\co=1$, we obtain that 
$C(\pi)=c_{p_0}\cdots c_{p_{s-1}} = c_{p_0}\cdots c_{p_{r-1}}$.
Note that $r=0$ if and only if all the parts of $\pi$ have colour $\co$. We set $\mu=(\mu_0,\ldots,\mu_{r-1},0_{\co})= \phi(\p)$. By Proposition \ref{prop:flat1}, for all $k\in \{0,\ldots,r-1\}$, the part $\mu_k$ is coloured by $c_{p_k}$ and has size
\[\sum_{l=k}^{r-1} H(p_{l+1}\ot p_l).\] 
Let us now build $\nu = (\nu_0,\ldots,\nu_{t-1},0_{\co})\in \Pp_{\co}$, where  $c(\nu_k)=\co$ and $\nu_k>0$ for all $k\in \{0,\cdots,t-1\}$. We distinguish two different cases.
\begin{itemize}
\item If $r<s$, then we set $t=s$ and $\nu = (\nu_0,\ldots,\nu_{s-1},0_{\co})$, where 
$$\begin{cases}
\nu_k&=\ \pi_k-\mu_k\quad \text{for } k \in \{0,\ldots,r-1\},\\
\nu_k&=\ \pi_k \qquad \quad \  \text{for } k\in \{r,\ldots,s-1\}.
\end{cases}$$
By \eqref{eq:relation}, the sequence
$(\nu_0,\ldots,\nu_{r-1})$ is non-increasing. Moreover the fact that $H(g\ot g)=0$ and $\pi_{s-1}\neq 0_{\co}$ implies that $\nu_{s-1}>0$, and $(\nu_0,\ldots,\nu_{s-1})$ is a non-increasing sequence of positive integers. Finally, let us check that $\nu_{r-1} \geq \nu_r.$ We have
\begin{align*}
\nu_{r-1}-\nu_r &= \pi_{r-1}-\pi_r -\mu_{r-1}&\\
&\geq H(p_{r}\ot p_{r-1})- H(p_{r}\ot p_{r-1}) &\text{by \eqref{eq:relation}}\\
&\geq 0.&
\end{align*}
Thus $(\nu_0,\ldots,\nu_{s-1})$ is indeed a non-increasing sequence of positive integers. 
\item By definition we have $r \leq s$, so the only other possible case is $r=s$. As before, $(\pi_0-\mu_0, \dots , \pi_s-\mu_s)$ is a non-increasing sequence of non-negative integers, now with $\pi_s-\mu_s = 0-0=0$. We then set 
$$t=\min\{k\in \{0,\ldots,s\}: \pi_k=\mu_k\},$$
and $\nu_k = \pi_k-\mu_k$ for all $k\in \{0,\ldots,t-1\}$.
\end{itemize}
\bi
Observe that for $\Phi(\pi) = (\mu,\nu)$, with $\pi = (\pi_0,\ldots,\pi_{s-1},0_{\co})$, $\mu = (\mu_0,\ldots,\mu_{r-1},0_{\co})$ and $\nu = (\nu_0,\ldots,\nu_{t-1},0_{\co})$, we always have $s= \max\{r,t\}$, and by adding $s- \min\{r,t\}$ parts $0_{\co}$ at the end of the shorter partition, we have $\pi_k = \mu_k+\nu_k$ and $c(\pi_k)=c(\mu_k)$ for all $k\in \{0,\ldots,s-1\}$. 
\bi
The map $\Phi^{-1}$ from $\Pppp\times \Pp_{\co}$ to $\Ppp$ simply consists in
adding the parts of $\mu = (\mu_0,\ldots,\mu_{r-1},0_{\co}) \in \Pppp$ to those of $\nu = (\nu_0,\cdots,\nu_{t-1},0_{\co}) \in \Pp_{\co}$ to obtain a grounded partition $\pi \in \Ppp$ in the following way: 
\begin{itemize}
 \item if $t\leq r$, then $\pi_k$ has size $\mu_k + \nu_k$ and colour $c(\mu_k)$, where we set $\nu_k=0$ for all $k\in\{t,\cdots,r-1\}$, and we obtain the partition
 \[\pi = (\pi_0,\cdots,\pi_{r-1},0_{\co}),\]
 \item if $t>r$, the first $r$ parts are defined as in the case $t\leq r$, and the remaining parts are $\pi_k = \nu_k$ for all $k\in \{r,\ldots,t-1\}$ with colour $\co$, and we obtain the partition
 \[\pi = (\pi_0,\cdots,\pi_{t-1},0_{\co}).\]
\end{itemize}
\end{proof}
\begin{exs}
Let us consider the energy matrix \eqref{Primcmatrix4} given Primc for the case $A_1^{(1)}$, and set
$\co=a_0b_0$. We give three examples of the previous bijection, for the different cases $t<r$, $t=r$, and $t>r$.
\begin{itemize}
\item Case $t<r$: Let $\pi = (10_{a_0b_0},7_{a_1b_0},5_{a_0b_1},3_{a_1b_1},2_{a_0b_0},1_{a_1b_0},0_{a_0b_0})$. Given the minimal difference conditions in \eqref{Primcmatrix4}, our bijection gives
$$\mu = (6_{a_0b_0},5_{a_1b_0},3_{a_0b_1},3_{a_1b_1},2_{a_0b_0},1_{a_1b_0},0_{a_0b_0})\quad\text{and}\quad \nu=(4_{a_0b_0},2_{a_0b_0},2_{a_0b_0},0_{a_0b_0}).$$
\item Case $t=r$: Let $\pi = (10_{a_0b_0},7_{a_1b_0},5_{a_0b_1},4_{a_1b_1},3_{a_0b_0},2_{a_1b_0},0_{a_0b_0})$. We have 
$$\mu = (6_{a_0b_0},5_{a_1b_0},3_{a_0b_1},3_{a_1b_1},2_{a_0b_0},1_{a_1b_0},0_{a_0b_0})\quad\text{and}\quad \nu=(4_{a_0b_0},2_{a_0b_0},2_{a_0b_0},1_{a_0b_0},1_{a_0b_0},1_{a_0b_0},0_{a_0b_0}).$$
\item Case $t>r$: Let $\pi = (8_{a_0b_0},5_{a_1b_0},3_{a_0b_1},2_{a_1b_1},1_{a_0b_0},1_{a_0b_0},0_{a_0b_0})$. We obtain
$$\mu = (4_{a_0b_0},3_{a_1b_0},1_{a_0b_1},1_{a_1b_1},0_{a_0b_0})\quad\text{and}\quad \nu=(4_{a_0b_0},2_{a_0b_0},2_{a_0b_0},1_{a_0b_0},1_{a_0b_0},1_{a_0b_0},0_{a_0b_0}).$$
\end{itemize} 

\end{exs}
\medskip
We are now able to give a character formula in terms of generating functions for grounded partitions.

\begin{theorem} \label{theorem:formchar}
Setting $q=e^{-\frac{\delta}{d_0}}$ and $c_b=e^{\wt b}$ for all $b\in \B$, we have $\co=1$, and the character of the irreducible highest weight $U_q(\widehat \g)$-module
$L(\lambda)$ is given by the following expressions:
\begin{align*}
\sum_{\pi\in \Pppp} C(\pi)q^{|\pi|} &= e^{-\Ll}\rm{ch}(L(\Ll)),\\
 \sum_{\pi\in \Ppp} C(\pi)q^{|\pi|} &= \frac{e^{-\Ll}\rm{ch}(L(\Ll))}{(q;q)_{\infty}}.
\end{align*}
\end{theorem}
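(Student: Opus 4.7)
The plan is to combine the two bijections constructed in Propositions \ref{prop:flat1} and \ref{prop:diff} with the (KMN)$^2$ crystal base character formula, specialised to our setting via Corollary \ref{cor:difcond}, so that the character $e^{-\Ll}\mathrm{ch}(L(\Ll))$ is realised directly as a generating function for grounded partitions.

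Before starting, I would observe that under the specialisation $c_b=e^{\wt b}$, the ground colour satisfies $\co=e^{\wt g}$. Since $b_{\Ll}=b^{\Ll}=g$, we have $\varepsilon(g)=\varphi(g)=\Ll$, so the identity $\wt b=\varphi(b)-\varepsilon(b)$ recalled just after \eqref{eq:epsilonphi} forces $\wt g=0$, and hence $\co=1$ as claimed.

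For the first identity, I would start from \eqref{eq:secondeq} in Theorem \ref{theorem:wtchar}, which gives
\[
\mathrm{ch}(L(\Ll))=\sum_{\p\in\Pp(\Ll)} e^{\ov{\mathrm{wt}\p}}.
\]
Applying the bijection $\phi:\Pp(\Ll)\to\Pppp$ of Proposition \ref{prop:flat1} to re-index the sum, and then invoking identity \eqref{eq:paca} together with the specialisation $q=e^{-\delta/d_0}$, each summand $e^{-\Ll+\mathrm{wt}\phi^{-1}(\pi)}$ becomes $C(\pi)\, q^{|\pi|}$, which yields $\sum_{\pi\in\Pppp}C(\pi)q^{|\pi|}=e^{-\Ll}\mathrm{ch}(L(\Ll))$.

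For the second identity, I would use the bijection $\Phi:\Ppp\to\Pppp\times\Pp_{\co}$ of Proposition \ref{prop:diff}, which preserves the weight and, with $\co=1$, also the colour sequence. This factorises the generating function as
\[
\sum_{\pi\in\Ppp} C(\pi)q^{|\pi|}=\left(\sum_{\mu\in\Pppp} C(\mu)q^{|\mu|}\right)\left(\sum_{\nu\in\Pp_{\co}} q^{|\nu|}\right).
\]
The first factor equals $e^{-\Ll}\mathrm{ch}(L(\Ll))$ by the first identity, and the second, by the natural bijection between $\Pp_{\co}$ and classical integer partitions described in the example preceding Proposition \ref{prop:flat1}, equals $1/(q;q)_\infty$. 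The two propositions do essentially all the structural work, so I expect no real obstacle: the theorem is a clean recombination, with the only subtlety being the bookkeeping verification that $\co=1$ under the chosen specialisation.
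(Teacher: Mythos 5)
Your proposal is correct and follows essentially the same route as the paper: Proposition \ref{prop:flat1} together with \eqref{eq:secondeq} and \eqref{eq:paca} for the first identity, then the fact that $\wt g=0$ (hence $\co=1$) and the weight- and colour-preserving bijection of Proposition \ref{prop:diff}, with $\Pp_{\co}$ contributing $1/(q;q)_\infty$, for the second. Your derivation of $\wt g=0$ from $\varepsilon(g)=\varphi(g)=\Ll$ is just an unpacking of Corollary \ref{cor:difcond}, which the paper cites directly.
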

\begin{proof}
By Propositon \ref{prop:flat1} and \eqref{eq:secondeq},
$$\sum_{\pi\in \Pppp} C(\pi)q^{|\pi|} = \sum_{\p \in \mathcal P(\lambda)} e^{-\Ll}e^{\mathrm{wt}\p}= e^{-\Ll}\rm ch(L(\lambda)).$$
By Corollary \ref{cor:difcond}, $\wt g =0.$ Thus $c_{g} = e^0=1$, and  Proposition \ref{prop:diff} yields
$$\sum_{\pi\in \Ppp} C(\pi)q^{|\pi|} = \frac{1}{(q;q)_{\infty}} \sum_{\pi\in \Pppp} C(\pi)q^{|\pi|}= \frac{e^{-\Ll}\rm{ch}(L(\Ll))}{(q;q)_{\infty}}.$$
\end{proof}
By this theorem, the characters of irreducible highest weight modules of level $\ell$ can be computed as generating functions for some grounded partitions.
It is the key that connects generalised Primc partitions to the characters of irreducible highest weight modules of level $1$ for the affine Lie algebra $A_{n-1}^{(1)}$.

\section{Perfect crystal of type $A_{n-1}^{(1)}$: tensor product of the vector representation and its dual}
\label{sec:An1}
We now describe the perfect crystal $\Bb$ used in Theorem \ref{theorem:perfcrys}.
Throughout this section, we fix an integer $n \geq 3.$

Consider the Cartan datum for the matrice $A=(a_{ij})_{i,j\in \I}$ where for all $i,j \in \I,$
\begin{equation}\label{eq:datum}
a_{ij} = 2\delta_{i,j}-\chi(i-j\equiv \pm 1 \mod n).
\end{equation}
It corresponds to the affine type $A_{n-1}^{(1)}$ \cite[10.1.1]{HK}.
We then have the corresponding canonical central element $c$ and null root  $\delta$, which are expressed in the following way:
\begin{equation}\label{eq:centnull}\begin{array}{cc} &c = h_0 + h_1 + \cdots + h_{n-1}, \\
&\delta = \alpha_0 + \alpha_1 + \cdots + \alpha_{n-1}.
\end{array}\end{equation}
Any dominant integral weight $\lambda = k_0\Lambda_0+\cdots+k_{n-1}\Lambda_{n-1}\in \bar P^+$ has level 
\begin{equation*}
 \langle c,\lambda\rangle = k_0+\cdots+k_{n-1}.
\end{equation*}
Thus, the set of classical weights of level $1$ is exactly $\bar P^+_1 = \{\Lambda_i: i\in \I\}$, the set of  fundamental weights.

\medskip
A perfect crystal of level 1 is given by the crystal graph in Figure \ref{fig:B} \cite[11.1.1]{HK}.

\begin{fig}
\label{fig:B}
 \[\]
\begin{center}
\begin{tikzpicture}
\draw (-2,0) node{$\B$ :};
\draw (0,0) node {$0$};
\draw (1.5,0) node {$1$};
\draw (5,0) node {$n-2$};
\draw (7,0) node {$n-1$};
\draw (3.15,0) node {$\cdots$};
\foreach \x in {0,1} 
\draw (-0.2+1.5*\x,0.2)--(0.2+1.5*\x,0.2)--(0.2+1.5*\x,-0.2)--(-0.2+1.5*\x,-0.2)--(-0.2+1.5*\x,0.2);
\foreach \x in {0,1} 
\draw (-0.45+5+2*\x,0.2)--(0.45+5+2*\x,0.2)--(0.45+5+2*\x,-0.2)--(-0.45+5+2*\x,-0.2)--(-0.45+5+2*\x,0.2);
\draw [thick, ->] (0.3,0)--(1.2,0);
\draw (0.75,0.15) node {\footnotesize{$1$}};
\draw [thick, ->] (1.8,0)--(2.7,0);
\draw (2.25,0.15) node {\footnotesize{$2$}};
\draw [thick, ->] (3.6,0)--(4.5,0);
\draw (4.05,0.15) node {\footnotesize{$n-2$}};
\draw [thick, ->] (5.6,0)--(6.4,0);
\draw (0.75,0.15) node {\footnotesize{$1$}};
\draw (6,0.15) node {\footnotesize{$n-1$}};
\draw [->,thick] (6.85,-0.3) arc(-75:-105:13);
\draw (3.5,-0.9) node {\footnotesize{$0$}};
\end{tikzpicture}
\end{center}
\end{fig}
The $U'_q(\widehat \g)$-module corresponding to this crystal is called the \textit{vector representation} of $A_{n-1}^{(1)}$. The most important property of this crystal is the order in which 
 the arrows appear. The only purpose of labelling the vertices is to ease the calculations in the remainder of this paper.  
Noting that this crystal graph is cyclic, we identify $\I$ with the group $(\Z/n\Z,+)$. In this way, the crystal graph of $\B$ can be defined locally around each arrow $i$ as shown in Figure \ref{fig:Blocal}.
\begin{fig}
\label{fig:Blocal}
 \[\]
\begin{center}
\begin{tikzpicture}
\draw (-2,0) node{$\B(\xrightarrow[]{\,\,i\,\,})$ :};
\draw (0,0) node {$i-1$};
\draw (2,0) node {$i$};
\draw (-0.4,0.2)--(0.4,0.2)--(0.4,-0.2)--(-0.4,-0.2)--(-0.4,0.2);
\draw (-0.2+2,0.2)--(0.2+2,0.2)--(0.2+2,-0.2)--(-0.2+2,-0.2)--(-0.2+2,0.2);
\draw [thick, ->] (0.5,0)--(1.7,0);
\draw (1.1,0.15) node {\footnotesize{$i$}};
\end{tikzpicture}
\end{center}
\end{fig}
\begin{remark}
For the type $A_1^{(1)},$ the Cartan matrice $A$ is defined differently and is given by 
\begin{equation*}
  \begin{pmatrix}
   2&-2\\-2&2
  \end{pmatrix}.
\end{equation*}
Nonetheless, the crystal graph of the vector representation behaves in the same way as in the case $n\geq 3$.
\end{remark}
For all $i \in \I,$ let $v_i$ be the element of $\B$ corresponding to the vertex labelled $i$. The functions of this crystal are given by the following relations:
\begin{equation} \label{eq:crysrel}  
\wt v_i = \Lambda_{i+1}-\Lambda_i \quad \text{for all } i\in \I,
\end{equation}
\begin{equation}\label{eq:fii} 
\left\lbrace
\begin{array}{rcl}
 \tilde f_i v_{i-1} &=& v_i \\
 \varphi_i v_{i-1} &=& 1 \\
\tilde f_i v_{j}=\varphi_i v_{j}&=& 0 \qquad\text{if}\quad j\neq i-1,
\end{array}\right.
\end{equation}
\begin{equation}\label{eq:ei}
\left\lbrace
\begin{array}{rcl}
\tilde e_i v_{i} &=& v_{i-1} \\
 \varepsilon_i v_{i} &=& 1 \\
\tilde e_i v_{j}=\varepsilon_i v_{j}&=& 0 \qquad\text{if}\quad j\neq i.
\end{array}\right.
\end{equation}
\m We note that for this crystal, the unique \textit{maximal weight} $\lambda_0$, as defined in Condition $(3)$ of Definition \ref{defn:pc}, is attained in $v_0$ (i.e. $\lambda_0= \wt v_0$). For all $i\in\I$, we have 
\begin{align*}
 \wt v_0 - \wt v_i &= \sum_{j=1}^i \wt v_{j-1}-\wt v_j\\
 &= \sum_{j=1}^i \alpha_j \quad\text{by \eqref{eq:weightalpha}}.
\end{align*}
The fact that the null root vanishes on $\bar \h$ implies that in $\bar P$,
$\ov \alpha_0 = -(\alpha_1+\cdots + \alpha_{n-1})$. We also remark that the crystal $\B$ has a unique \textit{minimal weight}, attained in $v_{n-1}$ : 
\begin{align*}
 \wt v_i - \wt v_{n-1} &= \sum_{j=i+1}^{n-1} \wt v_{j-1}-\wt v_j\\
 &= \sum_{j=i+1}^{n-1} \alpha_j \quad\text{by \eqref{eq:weightalpha}}.
\end{align*}

Let us consider the dual $\B^\vee$ of $\B,$ which is the crystal obtained from $\B$ by reversing the edges in its graph, as shown on Figure \ref{fig:4.3}.
\begin{fig}
\label{fig:4.3}
 \[\]
\begin{center}
\begin{tikzpicture}
\draw (-2,0) node{$\B^\vee$ :};
\draw (0,0) node {$0$};
\draw (1.5,0) node {$1$};
\draw (5,0) node {$n-2$};
\draw (7,0) node {$n-1$};
\draw (3.15,0) node {$\cdots$};
\foreach \x in {0,1} 
\draw (-0.2+1.5*\x,0.2)--(0.2+1.5*\x,0.2)--(0.2+1.5*\x,-0.2)--(-0.2+1.5*\x,-0.2)--(-0.2+1.5*\x,0.2);
\foreach \x in {0,1} 
\draw (-0.45+5+2*\x,0.2)--(0.45+5+2*\x,0.2)--(0.45+5+2*\x,-0.2)--(-0.45+5+2*\x,-0.2)--(-0.45+5+2*\x,0.2);
\draw [thick, <-] (0.3,0)--(1.2,0);
\draw (0.75,0.15) node {\footnotesize{$1$}};
\draw [thick, <-] (1.8,0)--(2.7,0);
\draw (2.25,0.15) node {\footnotesize{$2$}};
\draw [thick, <-] (3.6,0)--(4.5,0);
\draw (4.05,0.15) node {\footnotesize{$n-2$}};
\draw [thick, <-] (5.6,0)--(6.4,0);
\draw (0.75,0.15) node {\footnotesize{$1$}};
\draw (6,0.15) node {\footnotesize{$n-1$}};
\draw [<-,thick] (6.85,-0.3) arc(-75:-105:13);
\draw (3.5,-0.9) node {\footnotesize{$0$}};
\end{tikzpicture}
\end{center}
\end{fig}
Let $\vv$ denote the element of $\B^\vee$ corresponding to $v$ in $\B$. We then have the relations
\begin{equation}\label{eq:duality}
\wt \vv= - \wt v\,,\quad \tilde f_i \vv = (\tilde e_i v)^\vee\,,\quad \varphi_i \vv = \varepsilon_i v\,,\quad\tilde e_i \vv = (\tilde f_i v)^\vee \,\,\text{ and}\quad \varepsilon_i \vv = \varphi_i v.
\end{equation}
Recall that the duality is an involution, since by the previous equalities, we have 
\begin{equation}\label{eq:dualdual}
(\tilde f_i [(\vv)^\vee],\tilde e_i [(\vv)^\vee],\varphi_i [(\vv)^\vee],\varepsilon_i [(\vv)^\vee ])=(\tilde f_i [(\vv)^\vee],\tilde e_i [(\vv)^\vee] ,\varphi_i v,\varepsilon_i v ),  
\end{equation}
and the map $v \mapsto (\vv)^\vee$ is an isomorphism between $\B$ and $(\B^\vee)^\vee$. Thus $(\B^\vee)^\vee$ can be identified with $\B$.

The dual $\B^\vee$ is also a perfect crystal of level 1, as his maximal weight is attained in the dual $\vv_{n-1}$ of the minimal vertex $v_{n-1}$ of $\B$. 

Moreover, for two crystals $\B_1$ and $\B_2$, we have
\begin{equation}
\label{eq:dual_tensor_product}
(\B_1 \ot \B_2)^\vee = \B_2^\vee \ot \B_1^\vee.
\end{equation}

By Theorem \ref{theorem:tensorrules}, $\B\ot \B^\vee$ is a crystal for the tensor product of the vector representation of $A_{n-1}^{(1)}$ and its dual, and the tensor rules \eqref{eq:tensrul} on $\B\ot \B^\vee$ become
\begin{align*}
\tilde e_i (v_k \ot \vv_l) &= \begin{cases} \tilde e_i v_k \ot \vv_l \quad & \hbox{\rm if} \ \ \varphi_i(v_k)  \geq \varphi_i(v_l) \\
v_k \ot \tilde e_i \vv_l\quad & \hbox{\rm if} \ \ \varphi_i(v_k)  < \varphi_i(v_l) \end{cases},\\
\tilde f_i (v_k \ot \vv_l) &= \begin{cases} \tilde f_i v_k \ot \vv_l \quad & \hbox{\rm if} \ \ \varphi_i(v_k)  > \varphi_i(v_l) \\
v_k \ot \tilde f_i \vv_l \quad & \hbox{\rm if} \ \ \varphi_i(v_k)  \leq \varphi_i(v_l) \end{cases}.
\end{align*}
Using \eqref{eq:fii} and \eqref{eq:ei}, we can draw the corresponding crystal graph, given in Figure \ref{fig:crystal}. 

\newpage
\begin{fig}
\[\]
\label{fig:crystal}
\begin{center}
\begin{tikzpicture}
\draw (-7,0) node {$\B\ot\B^\vee$ :};

\draw (-6,0) node[right]{$v_0\ot \vv_{n-1}$};
\draw (-3.5,0) node[right]{$v_1\ot \vv_{n-1}$};
\draw (1.5,0) node[right] {{$v_{n-2}\ot \vv_{n-1}$}};
\draw (4.5,0) node[right] {$v_{n-1} \ot \vv_{n-1}$};

\draw (-6,-1) node[right]{$v_0\ot \vv_{n-2}$};
\draw (-3.5,-1) node[right]{$v_1\ot \vv_{n-2}$};
\draw (1.5,-1) node[right] {{$v_{n-2}\ot \vv_{n-2}$}};
\draw (4.5,-1) node[right] {$v_{n-1} \ot \vv_{n-2}$};

\draw (-6,-3) node[right]{$v_0\ot \vv_2$};
\draw (-3.5,-3) node[right]{$v_1\ot \vv_2$};
\draw (1.5,-3) node[right] {{$v_{n-2}\ot \vv_2$}};
\draw (4.5,-3) node[right] {$v_{n-1} \ot \vv_2$};

\draw (-6,-4) node[right]{$v_0\ot \vv_{1}$};
\draw (-3.5,-4) node[right]{$v_1\ot \vv_{1}$};
\draw (1.5,-4) node[right] {{$v_{n-2}\ot \vv_{1}$}};
\draw (4.5,-4) node[right] {$v_{n-1} \ot \vv_{1}$};

\draw (-6,-5) node[right]{$v_0\ot \vv_0$};
\draw (-3.5,-5) node[right]{$v_1\ot \vv_0$};
\draw (1.5,-5) node[right] {{$v_{n-2}\ot \vv_0$}};
\draw (4.5,-5) node[right] {$v_{n-1} \ot \vv_0$};

\draw (-1,0) node[right]{$v_2\ot \vv_{n-1}$};
\draw (-1,-1) node[right]{$v_2\ot \vv_{n-2}$};
\draw (-1,-3) node[right] {{$v_2\ot \vv_2$}};
\draw (-1,-4) node[right] {{$v_2\ot \vv_1$}};
\draw (-1,-5) node[right] {$v_2 \ot \vv_0$};

\foreach \x in {0,1}
\draw [thick, ->] (-5.34+2.5*\x,-0.15)--(-5.34+2.5*\x,-0.85);
\foreach \x in {0,1,3.16,4.36}
\draw [thick, dashed, ->] (-5.34+2.5*\x,-1.15)--(-5.34+2.5*\x,-2.85);

\foreach \x in {0,1,2,4.36}
\draw (-5.34+2.5*\x,-0.5) node[left] {\tiny{$n-1$}};
\foreach \y in {0,3,4,5}
\draw (4,0.15-\y) node {\tiny{$n-1$}};

\foreach \x in {0,2,3.16,4.36}
\draw (-5.34+2.5*\x,-3.5) node[left] {\tiny{$2$}};
\foreach \y in {0,1,3,5}
\draw (-1.5,0.15-\y) node {\tiny{$2$}};

\foreach \x in {1,2,3.16,4.36}
\draw (-5.34+2.5*\x,-4.5) node[left] {\tiny{$1$}};
\foreach \y in {0,1,3,4}
\draw (-4,0.15-\y) node {\tiny{$1$}};

\foreach \y in {0,3,4}
\draw [thick, ->] (-5.34+5,-0.15-\y)--(-5.34+5,-0.85-\y);
\draw [thick, ->] (-5.34,-3.15)--(-5.34,-3.85);
\draw [thick, ->] (-5.34+2.5,-4.15)--(-5.34+2.5,-4.85);

\foreach \x in {0,1}
\foreach \y in {3,4}
\draw [thick, ->] (2.56+3*\x,-0.15-\y)--(2.56+3*\x,-0.85-\y);;
\draw [thick, ->] (2.56+3,-0.15)--(2.56+3,-0.85);

\foreach \y in {0,1,3,4}
\draw [thick, ->] (-4.4,-\y)--(-3.45,-\y);
\foreach \y in {0,3,4,5}
\draw [thick, ->] (3.5,-\y)--(4.5,-\y);
\foreach \y in {0,1,3,5}
\draw [thick, ->] (-1.9,-\y)--(-1,-\y);
\foreach \y in {0,1,4,5}
\draw [thick,dashed, ->] (0.6,-\y)--(1.5,-\y);

\foreach \y in {1,3,4,5}
\draw [semithick,->] (5.4,-0.2-\y) arc(-87:-93:100);
\foreach \y in {1,3,4,5}
\draw (1,-0.5-\y) node {\tiny{$0$}};

\foreach \x in {0,1,2,3.16}
\draw [semithick, ->] (-4.5+2.5*\x,-5) arc(-10:10:13.8);
\foreach \x in {0,1,2,3.16}
\draw (-4.15+2.5*\x,-2) node {\tiny{$0$}};

\end{tikzpicture}
\end{center}
\end{fig}
Again, the crystal graph of $\B\ot \B^\vee$ can be defined locally by giving the vertices adjacent to the edges labelled $i$, as shown on Figure \ref{fig:crystali}.
\begin{fig}
\label{fig:crystali}
 \[\]
\begin{center}
\begin{tikzpicture}
\draw (-2,0) node {$\B\ot\B^\vee(\xrightarrow[]{i})$ :};
\draw(-2,-1) node { \footnotesize{$k\notin \{i-1,i\}$}};
\draw (0,0) node[right]{$v_k\ot \vv_i$};
\draw (2,0) node[right] {{$v_{i-1}\ot \vv_i$}};
\draw (4.6,0) node[right] {$v_i \ot \vv_i$};
\draw (0,-1) node[right] {$v_k \ot \vv_{i-1}$};
\draw (4.6,-1) node[right] {$v_i \ot \vv_{i-1}$};
\draw (2,-2) node[right] {$v_{i-1}\ot \vv_k$};
\draw (4.6,-2) node[right] {$v_i\ot \vv_k$};
\draw [thick, ->] (0.68,-0.2)--(0.68,-0.8);
\draw (0.5,-0.5) node {\footnotesize{$i$}};
\draw [thick, ->] (5.23,-0.2)--(5.23,-0.8);
\draw (5.1,-0.5) node {\footnotesize{$i$}};
\draw [thick, ->] (3.6,0)--(4.6,0);
\draw (4.1,0.15) node {\footnotesize{$i$}};
\draw [thick, ->] (3.6,-2)--(4.6,-2);
\draw (4.1,-1.85) node {\footnotesize{$i$}};
\end{tikzpicture}
\end{center}
\end{fig}
We obtain, for all $i \in \{0, \dots, n-1\}$, the relations
\begin{equation}\label{eq:vexdual}
\begin{array}{lc}
\begin{cases}
\varphi_i (v_{i-1}\ot \vv_i) = \varepsilon_i (v_i\ot \vv_{i-1})= 2 \\
\varphi_i (v_{i}\ot \vv_{i-1}) = \varepsilon_i (v_{i-1}\ot \vv_{i})= 0 \\
\varphi_i (v_i\ot \vv_i)= \varepsilon_i (v_i\ot \vv_i)=1 \\
\varphi_i (v_{i-1}\ot \vv_{i-1})=\varepsilon_i (v_{i-1}\ot \vv_{i-1}) = 0
\end{cases},&\\\\
\begin{cases}
\varphi_i (v_{k}\ot \vv_i) =\varepsilon_i (v_i\ot \vv_k)= 1  \\
\varphi_i (v_{i-1}\ot \vv_k) =\varepsilon_i (v_k\ot \vv_{i-1})= 1 \\
\varphi_i (v_k \ot \vv_l)= \varepsilon_i (v_l \ot \vv_k) = 0 
\end{cases},&\forall\,\, l,k\notin\{i,i-1\}.
\end{array}
\end{equation}
The local configurations for the vertices are given in Figure \ref{fig:localconf}.

\begin{fig}\label{fig:localconf}
 \[\]
\begin{center}
\begin{tikzpicture}
\draw(-6,-0.5) node {\footnotesize{$k-l\notin \{\pm 1\}$:}};
\draw (2,0) node[right] {{$v_{i-1}\ot \vv_i$}};
\draw (4.6,0) node[right] {$v_i \ot \vv_i$};
\draw (4.6,-1) node[right] {$v_i \ot \vv_{i-1}$};
\draw (-2.5,-0.5) node {$v_k \ot \vv_l$};

\draw [thick, ->] (5.23,-0.2)--(5.23,-0.8);
\draw (5.1,-0.5) node {\footnotesize{$i$}};

\draw [thick, ->] (5.23,-1.2)--(5.23,-1.8);
\draw (5.2,-1.5) node[left] {\footnotesize{$i-1$}};

\draw [thick, ->] (2.97,0.8)--(2.97,0.2);
\draw (2.95,0.5) node[left] {\footnotesize{$i+1$}};

\draw [thick, ->] (3.6,0)--(4.6,0);
\draw (4.1,0.15) node {\footnotesize{$i$}};

\draw [thick, ->] (6.2,-1)--(7.2,-1);
\draw (6.7,-0.85) node {\footnotesize{$i+1$}};

\draw [thick, ->] (0.9,0)--(1.9,0);
\draw (1.4,0.15) node {\footnotesize{$i-1$}};

\draw [thick, ->] (-1.9,-0.5)--(-0.9,-0.5);
\draw (-1.4,-0.35) node {\footnotesize{$k+1$}};

\draw [thick, ->] (-4.2,-0.5)--(-3.2,-0.5);
\draw (-3.7,-0.35) node {\footnotesize{$k$}};

\draw [thick, ->] (-2.5,0.4)--(-2.5,-0.3);
\draw (-2.5,0.1) node[left] {\footnotesize{$l+1$}};

\draw [thick, ->] (-2.5,-0.7)--(-2.5,-1.5);
\draw (-2.5,-1.1) node[left] {\footnotesize{$l$}};
\end{tikzpicture}
\end{center}
\end{fig}

The values of the functions $\varepsilon,\varphi$ defined in \eqref{eq:epsilonphi} are
\begin{equation}\label{eq:pp}
\begin{array}{lc}
\begin{cases}
\varphi(v_{i-1}\ot \vv_i)= \varepsilon(v_i\ot \vv_{i-1})= 2\Lambda_i \\
\varepsilon (v_{i-1}\ot \vv_i)= \varphi (v_i\ot \vv_{i-1})= \Lambda_{i-1}+\Lambda_{i+1} \\
\varphi(v_i\ot \vv_i)= \varepsilon(v_i\ot \vv_i)= \Lambda_i \\
\end{cases},&\\\\
\begin{cases}
\varphi(v_k\ot \vv_l) = \Lambda_{k+1}+\Lambda_{l}\\
 \varepsilon(v_k\ot \vv_l) = \Lambda_{l+1}+\Lambda_{k}\\
\end{cases},
\end{array}
\end{equation}
where $k-l\notin\{0,\pm1\}.$ For all $k,l\in \I,$ the weight of $v_k\ot \vv_l$ is given by
\begin{equation}\label{eq:wtt}
 \wt (v_k\ot \vv_l) = \Lambda_{k+1}-\Lambda_{k} + \Lambda_{l}-\Lambda_{l+1}.
\end{equation}
We then observe that 
\begin{equation}\label{eq:lev}
\langle c, \varepsilon(v_k\ot \vv_l)\rangle = 1 + \chi(k\neq l).
\end{equation}

By \cite[Lemma 4.6.2]{KMN2a}, since $\B$ and $\B^\vee$ are perfect crystals of level $1$, their 
tensor product $\Bb$ is also a perfect crystal of level $1$.
We observe that the potential grounds of $\Bb$ are the vertices $ v_i\ot \vv_i$, since by \eqref{eq:pp}, for all $i\in \I$, we have that 
$$\varepsilon(b^{\Lambda_i }) = \Lambda_i \text{ if and only if } b^{\Lambda_i }= v_i\ot \vv_i\quad\text{and}\quad \varphi(b_{\Lambda_i }) = \Lambda_i \text{ if and only if } b_{\Lambda_i }= v_i\ot \vv_i.$$

\section{Proof of the character formulas}
\label{sec:proofchar}

In this section, we prove our character formulas given in   Theorems \ref{theorem:finalcharac}, \ref{theorem:finalcharacter}, and \ref{theorem:formulaexp}, under the assumption that \Thm{theorem:perfcrys} is true. We will then prove \Thm{theorem:perfcrys} in the last two sections.
\subsection{Proof of  \Thm{theorem:finalcharac}} 
We show that the set of grounded partitions $\Ppp$, with $\gg$ defined in \eqref{eq:relation}, grounded at $c_g$ for $g=(v_0\ot\vv_0)$, is in bijection with the set of generalised Primc partitions.

Let $(\pi_0,\ldots,\pi_{s-1},0_{\co}) \in \Ppp$, and let $b$ be the vertex in $\Bb$ corresponding to the colour of $\pi_{s-1}$.
Since $\pi_{s-1}\neq 0_{\co}$, the minimal size of $\pi_{s-1}$ is $H((v_0\ot\vv_0)\ot (v_i\ot\vv_j))$ if $(i,j)\neq (0,0)$, and $1$ if $(i,j)=(0,0).$ This corresponds to the size of minimal parts in generalised Primc partitions.
By \Thm{theorem:perfcrys}, for all $(i,j)\neq (0,0),$
$$H((v_0\ot\vv_0)\ot (v_i\ot\vv_j))=\Delta(a_jb_i,a_0b_0)=1.$$
Thus the generalised Primc partitions and the grounded partitions in $\Ppp$ coincide in terms of minimal difference conditions 
and minimal part sizes,
with the colour correspondence $c_{v_l\ot\vv_k}\leftrightarrow a_kb_l$. Thus their generating functions are the same with the correspondence
$e^{\wt v_i} = b_i$, 
since by \eqref{eq:wtt},
$$e^{\wt(v_l\ot\vv_k)} = e^{\wt(l)-\wt(k)} = b_k^{-1}b_l.$$ 
Using the character formula of \Thm{theorem:formchar}, this gives the desired result. \qed

\subsection{Proof of \Thm{theorem:finalcharacter}}
Let us now turn to the proof of \Thm{theorem:finalcharacter}. It uses some notions defined in our first paper \cite{DK19}, such as bound and free colours, reduced colour sequences, kernel, insertions, types. As they are only needed for this proof, we do not redefine them here, and refer the reader to Sections 1 and 2 of \cite{DK19}.

Let us fix $\ell\in\{0,\ldots,n-1\}$ and recall that in the perfect crystal $\Bb$, we have $b^{\Lambda_{\ell} }= b_{\Lambda_{\ell} } = v_{\ell}\ot \vv_{\ell}$. Assuming that \Thm{theorem:perfcrys} is true, we also have that $H[(v_{\ell}\ot \vv_{\ell})\ot (v_{\ell}\ot \vv_{\ell})] = \Delta(a_{\ell}b_{\ell};a_{\ell }b_{\ell})=0$. 
Let us set $g=(v_{\ell}\ot\vv_{\ell})$ to be the ground in $\Bb$, and consider the set $\Ppp$ of grounded partitions with ground $\co$. For $\pi = (\pi_0,\ldots,\pi_{s-1}, 0_{\co})\in \Ppp$, we write  
$c(\pi_k)= c_{(v_{j_k}\ot \vv_{i_k})}.$

By \eqref{eq:relation}, for all $k\in\{0,\cdots,s-2\},$ the parts of $\pi$ satisfy the difference conditions
\[\pi_k-\pi_{k+1}\geq  H((v_{j_{k+1}}\ot \vv_{i_{k+1}})\ot (v_{j_k}\ot \vv_{i_k}))=\Delta(a_{i_k}b_{j_k},a_{i_{k+1}}b_{j_{k+1}}).\]

Let us now study the size of the smallest part.
\begin{itemize}
\item If $(j_{s-1},i_{s-1})\neq (\ell,\ell)$, the minimal size of $\pi_{s-1}$ is 
\begin{equation*}
 \Delta(a_{i_{s-1}}b_{j_{s-1}},a_{\ell} b_{\ell})= 
 \begin{cases}  
 \chi(i_{s-1}>\ell) = \chi(i_{s-1}\geq \ell)+\chi(\ell>j_{s-1}) &\text{ if}\quad j_{s-1}=\ell \quad(i_{s-1}\neq \ell)\\
\chi(i_{s-1}\geq \ell)+\chi(\ell>j_{s-1}) &\text{ if}\quad j_{s-1}\neq \ell
  \end{cases}.
\end{equation*}
\item Otherwise, we have that $j_{s-1}=i_{s-1}=\ell$, and then $c(\pi_{s-1})=c_g$. In this case, $\Delta(a_{\ell}b_{\ell};a_{\ell }b_{\ell})=0$ implies that the size of $\pi_{s-1}$ must be at least $1$ in order to have $\pi_{s-1}\neq 0_{\co}.$
We observe that, in that case, we still have $1 = \chi(i_{s-1}\geq \ell)+\chi(\ell>j_{s-1})$.
\end{itemize}
In both cases, our grounded partition $\pi$, without the part $0_{c_g}$, is a partition satisfying the difference condition $\Delta$ of generalised Primc partitions, but such that the minimal size for the last part, denoted by $\Delta(a_{i_{s-1}}b_{j_{s-1}},a_\infty b_\infty)$ with our conventions from \cite{DK19}, is given by the expression
\begin{equation}
\label{eq:minpart}
\Delta(a_{i_{s-1}}b_{j_{s-1}},a_\infty b_\infty) = \chi(i_{s-1}\geq \ell)+\chi(\ell>j_{s-1}).
\end{equation}
Moreover we observe that this is always equal to $1$ when $a_{i_{s-1}}b_{j_{s-1}}$ is a free colour. 
Thus in the case $\ell=0$, the minimal part always has size $1$, independently of its colour. For larger $\ell$, the minimal part may have size $0,1,$ or $2$ according to \eqref{eq:minpart}.
Besides, we keep the convention 
$\Delta(a_\infty b_\infty,c)=1,$ as it our first paper.

\medskip
The proof of \Thm{theorem:main2}  in \cite{DK19} relies on a correspondence between generalised Primc partitions and coloured Frobenius partitions having the same kernel. In the case where the kernel ends with a free colour $a_kb_k$, the generalised Primc partition is also a partition grounded in $c_g$ by adding $0_{c_g}$, and the type of the insertions inside the secondary pairs remain the same. 

When the kernel ends with a bound colour  $a_kb_{k'}$, $k \neq k'$, we modify the type of the insertion of $a_{k'}b_{k'}$ to the right of $a_kb_{k'}$, and it becomes
\begin{align}\label{eq:inst1}
T_{\Delta}(a_kb_{k'})&=\Delta(a_kb_{k'},a_{k'}b_{k'})+\Delta(a_kb_{k'},a_\infty b_\infty)-\Delta(a_kb_{k'},a_\infty b_\infty)\nonumber\\
&= 1+\chi(k>k')-(\chi(k\geq \ell)+\chi(\ell>k')).
\end{align}
Note that this value is still in $\{0,1\}$, since it can be rewritten as $\chi(\ell> k)+\chi(k>k')-\chi(\ell>k').$ The types of the others insertions are the same as the types for the generalised Primc partitions in \cite{DK19}.

\medskip
Recall from \cite{DK19} that a \textit{$n^2$-coloured Frobenius partition} is a pair of coloured partitions
$$\begin{pmatrix}
\lambda_0 & \lambda_1 & \cdots & \lambda_{s-1} \\
\mu_0 & \mu_1 & \cdots & \mu_{s-1} 
\end{pmatrix},$$
where $\lambda =\lambda_0 + \lambda_1 + \cdots + \lambda_{s-1}$  is a partition into $s$ distinct non-negative parts, each coloured with some $a_i$, $i \in \{0, \dots, n-1\},$ with the following order
\begin{equation}
\label{eq:orderFroba}
0_{a_{n-1}} < 0_{a_{n-2}} < \cdots < 0_{a_0} < 1_{a_{n-1}} < 1_{a_{n-2}} < \cdots < 1_{a_0} < \cdots,
\end{equation}
and $\mu=\mu_0+\mu_1+\cdots+\mu_{s-1}$ is a partition into $s$ distinct non-negative parts, each coloured with some $b_i$, $i \in \{0, \dots, n-1\},$ with the order
\begin{equation}
\label{eq:orderFrobb}
0_{b_{0}} < 0_{b_1} < \cdots < 0_{b_{n-1}} <1_{b_{0}} < 1_{b_1} < \cdots < 1_{b_{n-1}} < \cdots .
\end{equation}
The colour sequence of such a partition is defined to be $c(\lambda_0)c(\mu_0), \dots ,c(\lambda_{s-1})c(\mu_{s-1})$. Here the size corresponding to the colour $c(\lambda_i)c(\mu_i)$ is $\lambda_i+\mu_i$.

We consider coloured Frobenius partitions such that the minimal size for $\lambda_{s-1}+\mu_{s-1}$ is given by $\Delta'(a_kb_{k'},a_\infty b_\infty)=\Delta(a_kb_{k'},a_\infty b_\infty)$, where
 $c(\lambda_{s-1})= a_k,$ $c(\mu_{s-1})=b_{k'}$, and
 $\Delta(a_kb_{k'},a_\infty b_\infty)$ was defined in \eqref{eq:minpart}. 
We say that such coloured Frobenius partitions are \textit{grounded} at $c_g$.
We have $\Delta'(a_kb_{k},a_\infty b_\infty)=1$ for any free colour $a_kb_k.$
Note that the differences are the same as those defined in \cite{DK19}:
\begin{equation*}
 \Delta'(a_ib_{j},a_{i'}b_{j'}) = \chi(i\geq i')+\chi(j\leq j').
\end{equation*}
Here we keep the convention $\Delta'(a_\infty b_\infty,c)=1$.
When the kernel of the coloured Frobenius partition ends with a bound colour $a_{k}b_{k'}$, the type of the insertion of the free colour $a_{k'}b_{k'}$ to its right, according to the differences $\Delta'':=2-\Delta'$, is given by  
\begin{align}\label{eq:inst2}
T_{\Delta''}(a_kb_{k'})&= \Delta''(a_kb_{k'},a_{k'}b_{k'}) + \Delta''(a_{k'}b_{k'},a_\infty b_\infty)-\Delta''(a_kb_{k'},a_\infty b_\infty)\nonumber\\
&= 2-[\Delta'(a_kb_{k'},a_{k'}b_{k'}) + \Delta'(a_{k'}b_{k'},a_\infty b_\infty)-\Delta'(a_kb_{k'},a_\infty b_\infty)]\nonumber\\
&= 2 -[1+\chi(k>k')+1 - (\chi(k\geq \ell)+\chi(\ell>k'))]\nonumber\\
&= \chi(k\geq \ell)+\chi(\ell>k')-\chi(k> k'). 
\end{align}

The types of all the insertions which are not at the right end of the kernel are the same as the types for $\Delta''$ of the coloured Frobenius partitions in \cite{DK19}.
Thus, \eqref{eq:inst1} yields the relation 
\begin{equation*}
T_{\Delta}(a_kb_{k'})+T_{\Delta''}(a_kb_{k'}) =1.
\end{equation*}
This means that an insertion has $\Delta$-type $1$ if and only if it has  $\Delta''$-type $0$. Thus, by Theorem 3.1 of \cite{DK19}, the generating function for our \textit{grounded} generalised Primc partitions with a fixed kernel is the same as  the generating function for grounded coloured Frobenius partitions with the same kernel. Therefore, the generating function for generalised Primc partitions with minimal part size $\Delta(a_kb_{k'},a_\infty b_\infty)$ is the same 
as the generating function for coloured Frobenius partitions with  minimal part size $\Delta'(a_kb_{k'},a_\infty b_\infty)=\chi(k\geq \ell)+\chi(\ell>k')$. The generating function for the latter, where for all $i \in\I,$ the power of $b_i$ counts the number of colours $b_i$ minus the number of colours $a_i$ in the colour sequence, is given by 
$$[x^0]\prod_{i=0}^{\ell-1}(-b_i^{-1}x;q)_{\infty}(-b_iqx^{-1};q)_{\infty}\times\prod_{i=\ell}^{n-1}(-b_i^{-1}xq;q)_{\infty}(-b_ix^{-1};q)_{\infty}.$$

In this product, the minimal size for $\lambda_{s-1}$ with colour $a_k$ is $\chi(k\geq \ell)$, while the minimal size for $\mu_{s-1}$ with colour $b_{k'}$ is $\chi(k'<\ell)$,
so that the minimal size for $\lambda_{s-1}+\mu_{s-1}$ is indeed $\chi(k\geq \ell)+\chi(\ell>k')$. We conclude by noting that, by \Thm{theorem:main2}, this generating function is obtained by 
doing the changes of variables $b_i \mapsto b_iq^{\chi(i<\ell)}$ in 
\[G^P_{n}(q;b_0,\cdots,b_{n-1})=[x^0]\prod_{i=0}^{n-1}(-b^{-1}_ixq;q)_{\infty}(-b_ix^{-1};q)_{\infty},\]
which gives \Thm{theorem:finalcharacter}. \qed

\subsection{Proof of  \Thm{theorem:formulaexp}}
Finally, we turn to the proof of \Thm{theorem:formulaexp}, which gives the expression of the character for $L(\Lambda_{\ell})$ as a sum of series with positive coefficients.

By the definition of characters, the function $e^{-\Lambda_{\ell}}\rm{ch}(L(\Lambda_{\ell}))$ can be expressed as a power series in $e^{-\alpha_i}$ for $i\in \I$, or, by a change of variables, as a power series in $e^{-\delta}$ and $e^{\alpha_i}$ for $i\neq 0$.
By definition of the crystal graph $\B$, we have $\tilde f_i v_{i-1}= v_i$, so that by \eqref{eq:weightalpha}, we have $\wt v_{i-1}-\wt v_i=\alpha_i$ for $i \in \{1, \dots,n-1\}$ and $\wt v_{n-1}-\wt v_0=\ov \alpha_0.$ The change of variables $e^{\wt v_i}=b_i$ then gives  $e^{\alpha_i}= b_{i-1}b^{-1}_{i}$ for $i \in \{1, \dots,n-1\}$.
The changes of variables are natural, since for all $i\neq 0$, the weight $\alpha_i$ in $P$ is indeed a classical weight in $\bar P$. 
In addition, the series $G_n^P(b_0q,\cdots,b_{\ell-1}q,b_{\ell},\cdots,b_{n-1})$  can be expressed in terms of summands  
of the form 
$$\left(\prod_{i=0}^{n-1}b_i^{r_i}\right)q^m\quad\text{with} \quad \sum_{i=0}^{n-1} r_i =0,$$
so that we can always retrieve the exponent of  $b_{i-1}b^{-1}_i$, for all $i\in \{1,\ldots,n-1\}$, which corresponds to $\sum_{j=0}^{i-1} r_j.$ Thus the identification 
\begin{align*}
e^{-\delta} &\longleftrightarrow q \\
e^{\alpha_i} &\longleftrightarrow b_{i-1}b_i^{-1}
\end{align*}
is unique, and our generalisation of Primc's identity allows us to retrieve the non-dilated version of the characters 
for all the irreducible highest weight modules with classical weight of level 1 for the type $A_{n-1}^{(1)}$. 

\medskip
Looking at Formula  \eqref{eq:formulefinale}, we obtain the following correspondences (recall that $r_1=0=r_n$)
\begin{align*}
 \prod_{i=1}^{n-1} b_i^{-r_i+r_{i+1}} &= \prod_{i=1}^{n-1} (b_{i-1}b_i^{-1})^{r_i} = \prod_{i=1}^{n-1}e^{r_i\alpha_i},\\
 \prod_{j=0}^{i-1} b_{j}b_i^{-1}&= \prod_{j=1}^{i} (b_{j-1}b_j^{-1})^j= e^{\sum_{j=1}^i j\alpha_j}.\\
\end{align*}
By doing
these transformations in \eqref{eq:formulefinale}, we then obtain by \Thm{theorem:finalcharac} and Theorem \ref{theorem:main2} that 
\begin{align*}
e^{-\Lambda_0}\mathrm{ch}(L(\Lambda_0))&=\frac{1}{(e^{-\delta};e^{-\delta})_{\infty}^{n-1}}\sum_{\substack{s_1, \dots, s_{n-1}\in \Z\\s_n=0}}\prod_{i=1}^{n-1} e^{s_i\alpha_i}e^{s_i(s_{i+1}-s_{i})\delta}
\\&=\frac{1}{(e^{-\delta};e^{-\delta})_{\infty}^{n-1}} \sum_{\substack{r_1, \dots, r_{n-1}:\\0 \leq r_j \leq j-1\\r_n=0}} \prod_{i=1}^{n-1} e^{r_i\alpha_i}e^{r_i(r_{i+1}-r_i)\delta} \left(e^{-i(i+1)\delta};e^{-i(i+1)\delta}\right)_{\infty}
\\& \qquad \qquad \qquad \times \left(-e^{(ir_{i+1}-(i+1)r_i-\frac{i(i+1)}{2})\delta+\sum_{j=1}^i j\alpha_j};e^{-i(i+1)\delta}\right)_{\infty}
\\& \qquad \qquad \qquad \times \left(-e^{((i+1)r_i-ir_{i+1}-\frac{i(i+1)}{2})\delta-\sum_{j=1}^i j\alpha_j};e^{-i(i+1)\delta}\right)_{\infty}.
\end{align*}
Note that for all $\ell\in \{0,\ldots,n-1\}$ and $j\in \{1,\ldots,n-1\}$, the transformation $b_j \mapsto b_jq^{\chi(j<\ell)}$ is equivalent to $b_{j-1}b_j^{-1}\mapsto q^{\chi(j=\ell)}b_{j-1}b_j^{-1}.$
This corresponds to the transformations $e^{\alpha_j}\mapsto e^{-\chi(j=\ell)\delta+\alpha_j}$ for all $j \in \{1, \dots, n-1\},$ and \Thm{theorem:formulaexp} follows. \qed

\section{Tools for the proof of \Thm{theorem:perfcrys}}
\label{sec:toolperfcrys}
We already know that the crystal graph of $\Bb\ot\Bb$ is connected, as $\Bb$ is a perfect crystal. However, here we reprove this by building the paths in this graph, as these paths will allow us to compute the energy function.
First, let us define some tools that will help us simplify the construction of the paths.

\subsection{Symmetry in the crystal graph of $\Bb\ot \Bb$}
We use some properties of duality to exhibit symmetries in the crystal graph of $\Bb\ot\Bb$. Let us start with a general proposition about energy and duality.

\begin{prop}
\label{prop.energy_dual}
Let $\B$ be a self-dual crystal and $H$  be an energy function on $\B \ot \B$. Then for all $b_1, b_2 \in \B$, we have
\begin{equation}\label{eq:same_diff_energy}
H[b_1\ot b_2] -H[\tilde f_i(b_1\ot b_2)] = H[b_2^{\vee} \ot b_1^\vee]-H[\tilde e_i(b_2^\vee \ot b_1^\vee)].
\end{equation}
\end{prop}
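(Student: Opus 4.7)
My plan is a direct case analysis on the sign of $\varphi_i(b_1) - \varepsilon_i(b_2)$, translating between the two sides of \eqref{eq:same_diff_energy} using the duality identities \eqref{eq:duality} together with the alternative formulation \eqref{eq:enfct} of the energy axiom. Since the statement involves only one application of a Kashiwara operator, no inductive construction is required and the calculation is essentially immediate once the inequalities are set up correctly.

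First, I would record the key duality observation. By \eqref{eq:duality}, we have $\varphi_i(b_2^\vee) = \varepsilon_i(b_2)$ and $\varepsilon_i(b_1^\vee) = \varphi_i(b_1)$, so
\[
\varphi_i(b_1) > \varepsilon_i(b_2) \quad\Longleftrightarrow\quad \varphi_i(b_2^\vee) < \varepsilon_i(b_1^\vee),
\]
and consequently $\varphi_i(b_1) \leq \varepsilon_i(b_2) \Longleftrightarrow \varphi_i(b_2^\vee) \geq \varepsilon_i(b_1^\vee)$. This exactly pairs up the ``acts on the left factor'' branch of the tensor-product rule for $\tilde f_i$ on $b_1 \otimes b_2$ with the ``acts on the right factor'' branch of the rule for $\tilde e_i$ on $b_2^\vee \otimes b_1^\vee$, and vice versa.

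Next, I would apply \eqref{eq:enfct} twice. Its first line gives
\[
H[b_1 \otimes b_2] - H[\tilde f_i(b_1 \otimes b_2)] = \begin{cases} \chi(i=0) & \text{if } \varphi_i(b_1) > \varepsilon_i(b_2), \\ -\chi(i=0) & \text{if } \varphi_i(b_1) \leq \varepsilon_i(b_2), \end{cases}
\]
while its second line, applied to $b_2^\vee \otimes b_1^\vee$, gives
\[
H[b_2^\vee \otimes b_1^\vee] - H[\tilde e_i(b_2^\vee \otimes b_1^\vee)] = \begin{cases} -\chi(i=0) & \text{if } \varphi_i(b_2^\vee) \geq \varepsilon_i(b_1^\vee), \\ \chi(i=0) & \text{if } \varphi_i(b_2^\vee) < \varepsilon_i(b_1^\vee). \end{cases}
\]

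By the equivalence from the first paragraph, in the case $\varphi_i(b_1) > \varepsilon_i(b_2)$ both sides of \eqref{eq:same_diff_energy} equal $\chi(i=0)$, and in the complementary case $\varphi_i(b_1) \leq \varepsilon_i(b_2)$ both sides equal $-\chi(i=0)$; this yields the claim. Self-duality of $\B$ is only used to ensure that $b_2^\vee \otimes b_1^\vee$ makes sense as an element of $\B \otimes \B$ on which $H$ and $\tilde e_i$ are defined. There is no real obstacle here; the only thing to be careful about is that the ``$\geq$ vs $>$'' boundary is correctly handled by the two formulations of the energy axiom in \eqref{eq:enfct}, which is why I use the $\tilde f_i$ line for the left-hand side and the $\tilde e_i$ line for the right-hand side rather than the formulation \eqref{eq:ef}.
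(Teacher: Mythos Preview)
Your proof is correct and follows essentially the same approach as the paper: apply the $\tilde f_i$ case of \eqref{eq:enfct} to $b_1\otimes b_2$ and the $\tilde e_i$ case to $b_2^\vee\otimes b_1^\vee$, then use the duality identities $\varphi_i(b_2^\vee)=\varepsilon_i(b_2)$ and $\varepsilon_i(b_1^\vee)=\varphi_i(b_1)$ to match the case conditions. (Minor slip: what you call ``first line'' and ``second line'' of \eqref{eq:enfct} are swapped relative to the display in the paper, but the formulas you wrote are correct.)
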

\begin{proof}
By \eqref{eq:enfct}, we have
$$\begin{array}{c}
 H[b_1 \ot b_2] - H[\tilde f_i (b_1 \ot b_2)]
= \begin{cases}
- \chi(i=0) & \qquad \hbox{\rm if} \ \ \varphi_i(b_1) \leq \varepsilon_i(b_2)  \\
+ \chi(i=0) & \qquad \hbox{\rm if} \ \ \varphi_i(b_1) > \varepsilon_i(b_2),
\end{cases} \\
H[b_2^\vee \ot b_1^\vee] - H[\tilde e_i (b_2^\vee \ot b_1^\vee)]
= \begin{cases}
-\chi(i=0) & \qquad \hbox{\rm if} \ \ \varphi_i(b_2^\vee)  \geq \varepsilon_i(b_1^\vee)  \\
+ \chi(i=0) & \qquad \hbox{\rm if} \ \ \varphi_i(b_2^\vee)< \varepsilon_i(b_1^\vee).
\end{cases}
\end{array}$$
On the other hand, we have by duality :
\begin{align*}
\varphi_i(b_2^\vee) &=\varepsilon_i(b_2),\\
\varepsilon_i(b_1^\vee) &= \varphi_i(b_1).
\end{align*}
The equality \eqref{eq:same_diff_energy} follows.
\end{proof}

We now use Proposition \ref{prop.energy_dual} to give the symmetry in $\Bb\ot\Bb.$

\begin{prop}\label{prop:pathreduc}
Let $\B$ be a crystal, let $\B^\vee$ be its dual, and let $\Bb =\B\ot\B^\vee$. Let $H$ be an energy function on $\Bb\ot\Bb.$

For all $\sigma_1,\sigma_2,\sigma_3,\sigma_4,\tau_1,\tau_2,\tau_3,\tau_4\in \B$, there exists a path between $(\sigma_1\ot\sv_2)\ot (\sigma_3\ot\sv_4)$ and $(\tau_1\ot\tau_2^{\vee})\ot (\tau_3\ot\tau_4^\vee)$ in $\Bb \ot \Bb$ if and only if there exists a path between 
$(\sigma_4\ot\sv_3)\ot (\sigma_2\ot\sv_1)$ and $(\tau_4\ot\tau_3^{\vee})\ot (\tau_2\ot\tau_1^\vee)$ in $\Bb \ot \Bb$. These paths are dual, i.e. we can obtain one from the other by reversing the edges and replacing each vertex by its dual.

Moreover, if there exists a path between $(\sigma_1\ot\sv_2)\ot (\sigma_3\ot\sv_4)$ and $(\tau_1\ot\tau_2^{\vee})\ot (\tau_2\ot\tau_1^\vee)$, we have
\begin{equation}\label{eq:egalite}
 H[(\sigma_1\ot\sv_2)\ot (\sigma_3\ot\sv_4)] = H[(\sigma_4\ot\sv_3)\ot (\sigma_2\ot\sv_1)].
\end{equation}
\end{prop}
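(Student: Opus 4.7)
The plan is to exploit the self-duality of the tensor crystal $\Bb = \B\ot\B^\vee$ to transport both paths and energy differences under duality. First I would observe that, using the functorial rule $(\B_1 \ot \B_2)^\vee = \B_2^\vee \ot \B_1^\vee$ from \eqref{eq:dual_tensor_product} together with the involutivity $(\B^\vee)^\vee \cong \B$ recorded in \eqref{eq:dualdual}, we have a canonical crystal isomorphism $\Bb^\vee \cong \Bb$. Under this isomorphism, a simple tensor $v \ot \vv_i \in \Bb$ is sent to $(\vv_i)^\vee \ot v^\vee = v_i \ot v^\vee$; consequently, a length-two tensor $(\sigma_1\ot\sv_2) \ot (\sigma_3 \ot \sv_4)$ in $\Bb \ot \Bb$ has dual $(\sigma_4 \ot \sv_3) \ot (\sigma_2 \ot \sv_1)$, which is precisely the vertex appearing in the statement.

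Next, for the existence half, I would use the general principle that duality reverses arrows: by \eqref{eq:duality}, an edge $X \xrightarrow{i} Y$ in $\Bb\ot\Bb$ (i.e.\ $Y = \tilde f_i X$) yields the edge $Y^\vee \xrightarrow{i} X^\vee$ in $(\Bb\ot\Bb)^\vee \cong \Bb\ot\Bb$, since $(\tilde f_i X)^\vee = \tilde e_i X^\vee$. Applying this edge-by-edge to a path from $(\sigma_1\ot\sv_2)\ot (\sigma_3\ot\sv_4)$ to $(\tau_1\ot\tau_2^{\vee})\ot (\tau_3\ot\tau_4^{\vee})$ produces a reversed path of the stated dual form, and the argument is symmetric in both directions, which establishes the ``if and only if''.

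For the energy equality, I would chain Proposition~\ref{prop.energy_dual} along the path. Given any edge $X_{j-1} \xrightarrow{i_j} X_j$ with $X_{j-1} = b_1 \ot b_2 \in \Bb\ot\Bb$, the proposition applied to the crystal $\Bb$ gives
\begin{equation*}
H[X_{j-1}] - H[X_j] = H[X_{j-1}^\vee] - H[\tilde e_{i_j} X_{j-1}^\vee] = H[X_{j-1}^\vee] - H[X_j^\vee],
\end{equation*}
where in the last equality I use $X_j^\vee = (\tilde f_{i_j} X_{j-1})^\vee = \tilde e_{i_j} X_{j-1}^\vee$. Telescoping this identity along a path from $A = (\sigma_1\ot\sv_2)\ot (\sigma_3\ot\sv_4)$ to a target $B$, one obtains $H[A] - H[B] = H[A^\vee] - H[B^\vee]$. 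The punchline is that the specific target $B = (\tau_1\ot\tau_2^{\vee})\ot (\tau_2\ot\tau_1^\vee)$ is \emph{self-dual} under the duality map (its dual is $(\tau_1 \ot \tau_2^\vee)\ot(\tau_2 \ot \tau_1^\vee) = B$), so $H[B] = H[B^\vee]$ and hence $H[A] = H[A^\vee]$, which is exactly \eqref{eq:egalite}.

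The main obstacle I anticipate is purely bookkeeping: making sure the identification $\Bb^\vee \cong \Bb$ is compatible with the two tensor factors in $\Bb\ot\Bb$ in the right order, so that the dual of $(\sigma_1\ot\sv_2)\ot(\sigma_3\ot\sv_4)$ is genuinely $(\sigma_4\ot\sv_3)\ot(\sigma_2\ot\sv_1)$ rather than some other permutation; once this is settled, Proposition~\ref{prop.energy_dual} and the telescoping argument are purely mechanical.
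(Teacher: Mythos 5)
Your proposal is correct and follows essentially the same route as the paper: the path claim comes from duality reversing edges via $(\tilde f_i X)^\vee=\tilde e_i X^\vee$, and the energy equality comes from chaining Proposition~\ref{prop.energy_dual} along the path (your telescoping is just the paper's induction on path length), with the self-duality of the endpoint $(\tau_1\ot\tau_2^{\vee})\ot(\tau_2\ot\tau_1^\vee)$ playing the role of the paper's base case. The only cosmetic difference is that the paper phrases the inductive step to cover edges traversed in either direction ($g_i=\tilde f_i$ or $\tilde e_i$), a detail your telescoping handles implicitly by flipping signs.
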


\begin{proof}
Note that $\Bb \ot \Bb$ is self-dual.

The first claim about the paths follows directly from the definition of duality.

We prove \eqref{eq:egalite} by induction on the length of the path between $(\sigma_1\ot\sv_2)\ot (\sigma_3\ot\sv_4)$ and $(\tau_1\ot\tau_2^{\vee})\ot (\tau_2\ot\tau_1^\vee)$.
\begin{itemize}
\item If the path is of length $0$, i.e. we consider the vertex $(\tau_1\ot\tau_2^{\vee})\ot (\tau_2\ot\tau_1^\vee)$, \eqref{eq:egalite} is simply the identity.

\item Now assume that \eqref{eq:egalite} is true for all vertices $(\sigma_1\ot\sv_2)\ot (\sigma_3\ot\sv_4)$ having a path of length $n$ to $(\tau_1\ot\tau_2^{\vee})\ot (\tau_2\ot\tau_1^\vee)$, and show it for paths of length $n+1$.
Let $(\sigma_1\ot\sv_2)\ot (\sigma_3\ot\sv_4)$ be such that there is a path of length $n+1$ between $(\sigma_1\ot\sv_2)\ot (\sigma_3\ot\sv_4)$ and $(\tau_1\ot\tau_2^{\vee})\ot (\tau_2\ot\tau_1^\vee)$.

Then there exists $i$ such that 
$$(\sigma_1\ot\sv_2)\ot (\sigma_3\ot\sv_4) = g_i \big((\rho_1\ot\rho^\vee_2)\ot (\rho_3\ot\rho^\vee_4)\big),$$
where $g_i$ is equal to either $\tilde f_i$ or $\tilde e_i$, and such that there is a path of length $n$ between $(\rho_1\ot\rho^\vee_2)\ot (\rho_3\ot\rho^\vee_4)$ and  $(\tau_1\ot\tau_2^{\vee})\ot (\tau_2\ot\tau_1^\vee)$.

By the induction hypothesis, we have 
$$ H[(\rho_1\ot\rho^\vee_2)\ot (\rho_3\ot\rho^\vee_4)] = H[(\rho_4\ot\rho^\vee_3)\ot (\rho_2\ot\rho^\vee_1)].$$
Moreover, by Proposition \ref{prop.energy_dual}, we have
$$ H[(\sigma_1\ot\sv_2)\ot (\sigma_3\ot\sv_4)] - H[(\rho_1\ot\rho^\vee_2)\ot (\rho_3\ot\rho^\vee_4)] = H[(\sigma_4\ot\sv_3)\ot (\sigma_2\ot\sv_1)] - H[(\rho_4\ot\rho^\vee_3)\ot (\rho_2\ot\rho^\vee_1)].$$
Combining the two equalities completes the proof.
\end{itemize}
\end{proof}

In particular, by Proposition \ref{prop:pathreduc}, if we find a path from 
$(v_{0}\ot\vv_{0})\ot (v_0\ot\vv_0)$ to $(v_{l'}\ot\vv_{k'})\ot (v_l\ot\vv_k)$, then we immediately have a path from
$(v_{0}\ot\vv_{0})\ot (v_0\ot\vv_0)$ to $(v_{k}\ot\vv_{l})\ot (v_{k'}\ot\vv_{l'})$ as well, and by \eqref{eq:egalite}, this yields the following symmetry on the energy function:
\[
 H[(v_{l'}\ot\vv_{k'})\ot (v_l\ot\vv_k)]=H[(v_{k}\ot\vv_{l})\ot (v_{k'}\ot\vv_{l'})].
\]
Besides,  by \eqref{eq:Delta}, we have
\begin{align}
\Delta(a_kb_l;a_{k'}b_{l'}) & = \chi(k\geq k')-\chi(k=l=k')+\chi(l\leq l')-\chi(l=k'=l')\nonumber\\
& = 
\begin{cases}
\chi(k>k')+ \chi(l<l')\quad\text{if}\quad l=k'\\
\chi(k\geq k')+ \chi(l\leq l')\quad\text{if}\quad l\neq k'
\end{cases},\label{eq:primcreducdiff}
\end{align}
and then 
\[ \Delta(a_kb_l;a_{k'}b_{l'}) = \Delta(a_{l'}b_{k'};a_{l}b_{k}).\]
Therefore, if we prove that  $H[(v_{l'}\ot\vv_{k'})\ot (v_l\ot\vv_k)]= \Delta(a_kb_l;a_{k'}b_{l'})$, we equivalently have  
$H[(v_{k}\ot\vv_{l})\ot (v_{k'}\ot\vv_{l'})] = \Delta(a_{l'}b_{k'};a_{l}b_{k})$. Thus, to prove Theorem \ref{theorem:perfcrys} in Section \ref{sec:proofcrys}, we will distinguish several cases according to some relations between $k,k',l,l'$, and by interchanging $k\equiv l'$ and $k'\equiv l$, the symmetry will imply the remaining cases.

\subsection{Redefining the minimal differences $\Delta$}
To build a path from $(v_{0}\ot\vv_{0})\ot (v_0\ot\vv_0)$ to $(v_{l'}\ot\vv_{k'})\ot (v_l\ot\vv_k)$ and show that 
\[H[(v_{l'}\ot\vv_{k'})\ot (v_l\ot\vv_k)]=\Delta(a_kb_l;a_{k'}b_{l'}),\]
we will distinguish the cases  $k'=l$ and $k'\neq l$. But first, let us define a tool which will make our proofs simpler.
\begin{defn}\label{defn:tool}
Let us identify $\I$ with $\Z/n\Z$, and consider the natural order on $\I$,
\[0<1<\cdots<n-2<n-1.\]
We also define, for all $i,j\in \I,$ the intervals 
\[\inter(i,j):=\{i+1,i+2,\ldots,j-1,j\}.\]
\end{defn}
\begin{lem}\label{lem:diffint}
For all $i \in \I$, we have the following:
\begin{equation*}
\begin{array}{ccc}
i<i-1&\Longleftrightarrow& i=0,\\
\inter(i,i)& = &\I,\\ 
\I\setminus \inter(i,j)= \inter (j,i) &\Longleftrightarrow& i\neq j,\\
0\notin \inter(j,i)&\Longleftrightarrow&j<i,\\
0\in \inter(i,j)&\Longleftrightarrow& j\leq i.\\
\end{array}
\end{equation*}
\end{lem}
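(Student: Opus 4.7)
The plan is to verify each of the five assertions directly from the definition $\inter(i,j)=\{i+1,i+2,\ldots,j-1,j\}$ in $\Z/n\Z$, splitting into cases based on whether a cyclic interval wraps around the element $0$. There is no deep content here; it is a routine check of the cyclic arithmetic that will be used repeatedly in the construction of the paths in Section~\ref{sec:proofcrys}.

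For the first equivalence, I would simply unfold $i-1$ in $\Z/n\Z$: if $i=0$, then $i-1$ is the class of $n-1$, which in the natural order on $\I=\{0,\ldots,n-1\}$ lies above $0$; if $i\geq 1$, the representatives satisfy $i-1<i$. For the second identity, I would read $\inter(i,i)$ as the full cyclic sequence $i+1,i+2,\ldots,i-1,i$ of length $n$, which hits every residue exactly once, hence equals $\I$.

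For the third equivalence, the forward direction is obvious (if $i=j$ then $\inter(i,j)=\I$ has empty complement, but $\inter(j,i)=\I$ itself, so equality fails). For the converse, when $i\neq j$ the enumerations of $\inter(i,j)$ and $\inter(j,i)$ concatenate, in the cyclic order $i+1,\ldots,j,j+1,\ldots,i$, to a listing of $\I$ without repetition, which gives both disjointness and the fact that their union is $\I$. For the fourth and fifth equivalences, I would observe that $0\in\inter(i,j)$ iff the cyclic run from $i+1$ to $j$ must ``cross'' the identification $n\equiv 0$, and this happens iff in the natural order on $\I$ we do not have $i+1\leq j$, i.e.\ iff $j\leq i$. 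The fourth equivalence then follows by applying the fifth one after swapping $i$ and $j$, since $0\notin\inter(j,i)\iff\neg(i\leq j)\iff j<i$.

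There is no real obstacle: the proof amounts to being careful about the identification of $\I$ with $\Z/n\Z$ versus the natural order, and to listing the two concatenated runs $i+1,\ldots,j$ and $j+1,\ldots,i$ to see they partition $\I$ when $i\neq j$. The only pitfall worth flagging is the degenerate case $i=j$ in the third claim, where the complement equation fails; isolating this case at the outset keeps the rest of the argument transparent.
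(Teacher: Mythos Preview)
Your proposal is correct and follows essentially the same approach as the paper's proof: a direct case-by-case verification of the cyclic arithmetic. The only minor difference is the order in which you handle the last two equivalences---the paper proves the fourth directly and derives the fifth from it via a short chain of $\chi$-identities, whereas you prove the fifth directly and deduce the fourth by swapping $i$ and $j$; both routes are equally valid.
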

The aim of this lemma is to rewrite the difference conditions $\Delta$ according to the fact that $0$ belongs to some interval or not.
By \eqref{eq:primcreducdiff}, $\Delta$ can be reformulated as follows:
\begin{equation}\label{eq:finfin}
\Delta(a_kb_l;a_{k'}b_{l'})  = 
\begin{cases}
\chi(0\notin \inter(k',k))+\chi(0\notin \inter(l,l'))\quad\text{if}\quad l=k'\\
\chi(0\in \inter(k,k'))+\chi(0\in \inter(l',l))\quad\text{if}\quad l\neq k'
\end{cases}.
\end{equation}

\begin{proof}[Proof of Lemma \ref{lem:diffint}]
 The first equivalence is straightforward, since $i>i-1$ if and only if $i\neq 0$, and $0<n-1=-1$.
 
 The second equality follows from the definition of $\inter$, since we go around $\I$.

 Note that 
 \[\inter(i,j) = \{i+1,i+2,\ldots,j-1,j\},\]
while 
$$\inter(j,i) = \{j+1,j+2,\ldots,i-1,i\},$$
 and if $i\neq j$, these two sets are complementary in $\I$. Moreover, when $i \neq j,$
 we have $i\in \inter(j,i)$ and $j\in \inter(i,j)$, so that both sets never equal $\emptyset$ nor $\I$. Otherwise, when $i=j$, they would both be equal to $\I$. This gives the third equivalence.

 For the fourth equivalence, the fact that $0\in \I$ gives  
 \begin{align*}
  0\notin \inter(j,i) &\Longleftrightarrow 0\notin \{j+1,i+2,\ldots,j-1,i\},\\
 &\Longleftrightarrow  i\neq j \text{ and } \emptyset \neq \{j+1,j+2,\ldots,i-1,i\}\subseteq \{1,\ldots,n-1\}\\
 &\Longleftrightarrow j<j+1\leq i.
 \end{align*}
 Finally, for the last equivalence, we note that 
 \begin{align*}
  \chi(j\leq i) &= \chi(j<i)+\chi(j=i)\\
   &= \chi(j<i)\chi(j\neq i)+\chi(j=i)\\
  &=\chi( 0\notin \inter(j,i))\chi(i\neq j) +\chi(i=j)\\
  &=\chi( 0\in \inter(i,j))\chi(i\neq j) +\chi(i=j)\chi(0\in \inter(i,i))  .
 \end{align*}
This concludes the proof.
\end{proof}

\section{Proof of \Thm{theorem:perfcrys}}
\label{sec:proofcrys}
We are now ready to build the paths in $\Bb\ot \Bb$, and use them to compute the energy function
$H[(v_{l'}\ot\vv_{k'})\ot (v_l\ot\vv_k)].$ We will use the relations in \eqref{eq:vexdual} and the local configurations of the vertices as defined in \eqref{fig:localconf}. 
The symmetric of $(v_{l'}\ot\vv_{k'})\ot (v_l\ot\vv_k)$ is $(v_{k}\ot\vv_{l})\ot (v_{k'}\ot\vv_{l'})$, obtained by interchanging $k'\equiv l, l'\equiv k$.
We distinguish several cases:
\begin{enumerate}
 \item $k'=l'$ and $l=k$,
 \item $k'=l\neq k=l'$,
 \item $k'=l$ and $k\neq l'$,
 \item $k'\neq k=l=l'$ (Symmetric: $l\neq k=k'=l'$),
 \item $l'\neq k'=k\neq l$ (Symmetric: $k\neq l=l' \neq k'$),
 \item $k\neq k'$, $k'\neq l$ and $l\neq l'$
 \begin{enumerate}
  \item $k+1,k'\notin \inter(l,l')$ (Symmetric: $l'+1 ,l\notin \inter(k',k)$),
  \item $k+1\in int(l,l')$ and $k'\notin \inter(l,l')$ (Symmetric: $l'+1 \in \inter(k',k)$ and $l\notin \inter(k',k)$)
  \item $k+1\notin \inter(l,l')$ and $k'\in \inter(l,l')$ (Symmetric: $l'+1 \notin \inter(k',k)$ and $l\in \inter(k',k)$)
  \item $k+1,k'\in \inter(l,l')$ and $l'+1,l\in \inter(k',k)$.
   \end{enumerate}
\end{enumerate}

\subsection{The case $k'=l'$ and $l=k$}
We want to compute the energy $H[(v_{k'}\ot\vv_{k'})\ot (v_l\ot\vv_l)]$ for all $k',l$. To do so, we build a path from $(v_{k'}\ot\vv_{k'})\ot (v_{k'}\ot\vv_{k'})$ to $(v_{k'}\ot\vv_{k'})\ot (v_l\ot\vv_l)$. We consider the case $k'\neq l$, as otherwise the two elements are the same.
By \eqref{eq:vexdual}, we have
 $$\varphi_i(v_{k'}\ot\vv_{k'}) = \varepsilon_i(v_{k'}\ot\vv_{k'}) = \chi(i=k').$$
By the tensor rules \eqref{eq:tensrul}, we then obtain the path
\begin{align*}
(v_{k'}\ot\vv_{k'})\ot (v_{k'}\ot\vv_{k'})\xrightarrow[]{\,\,\,k'\,\,}\underbrace{(v_{k'}\ot\vv_{k'})\ot (v_{k'}\ot\vv_{k'-1})\xrightarrow[]{k'-1}\cdots \xrightarrow[]{\,l+1}}_{\text{empty if }k'=l+1}(v_{k'}\ot\vv_{k'})\ot (v_{k'}\ot\vv_{l})\\
\Big\downarrow {\scriptstyle k'+1}\qquad\qquad \\
(v_{k'}\ot\vv_{k'})\ot (v_{l}\ot\vv_{l})\xleftarrow[]{\,\,\,l\,\,\,\,}\underbrace{(v_{k'}\ot\vv_{k'})\ot (v_{l-1}\ot\vv_l)\xleftarrow[]{l-1}\cdots \xleftarrow[]{k'+2}(v_{k'}\ot\vv_{k'})\ot (v_{k'+1}\ot\vv_{l})}_{\text{empty if }k'+1=l}\\
\end{align*}
This path is only made of forward moves $\tilde f_i$, with $i\in \inter(l,k')\sqcup \inter(k',l)$ appearing once, where we change the right side of the tensor products. By \eqref{eq:enfct}, we then have 
\begin{equation}
\label{eq:H1}
H[(v_{k'}\ot\vv_{k'})\ot (v_{l}\ot\vv_{l})]-H[(v_{k'}\ot\vv_{k'})\ot (v_{k'}\ot\vv_{k'})] = \chi(0\in \inter(l,k'))+\chi(0\in \inter(k',l))=1.
\end{equation}
By Proposition \ref{prop:pathreduc}, we have the dual path from $(v_{k'}\ot\vv_{k'})\ot (v_{k'}\ot\vv_{k'})$ to $(v_{l}\ot\vv_{l})\ot (v_{k'}\ot\vv_{k'})$, and
\begin{equation}
\label{eq:Hsym1}
H[(v_{l}\ot\vv_{l})\ot (v_{k'}\ot\vv_{k'})]-H[(v_{k'}\ot\vv_{k'})\ot (v_{k'}\ot\vv_{k'})]=1.
\end{equation}
Here we need to compute $H((v_{k'}\ot\vv_{k'})\ot (v_{k'}\ot\vv_{k'})).$ By interchanging $k'$ and $l$, we obtain
\begin{equation}
\label{eq:H1'}
H[(v_{k'}\ot\vv_{k'})\ot (v_{l}\ot\vv_{l})]-H[(v_{l}\ot\vv_{l})\ot (v_{l}\ot\vv_{l})]= 1.
\end{equation}
Subtracting \eqref{eq:H1} to \eqref{eq:H1'} yields
\[H[(v_{k'}\ot\vv_{k'})\ot (v_{k'}\ot\vv_{k'})]=H[(v_{l}\ot\vv_{l})\ot (v_{l}\ot\vv_{l})],\]
and we have an explicit path from $(v_{l}\ot\vv_{l})\ot (v_{l}\ot\vv_{l})$ to $(v_{k'}\ot\vv_{k'})\ot (v_{k'}\ot\vv_{k'})$ by combining the previous paths.

Recall that by definition, $ H[(v_{0}\ot\vv_{0})\ot (v_{0}\ot\vv_{0})]=0.$ Thus setting $k'=0$ yields by \eqref{eq:finfin} that for all $l\in \I$,
\begin{equation}\label{eq:premcas}
\begin{aligned}
 H[(v_{l}\ot\vv_{l})\ot (v_{l}\ot\vv_{l})] &= 0\\
  &= 2\chi(0\notin \inter(l,l)) \\
 &= \Delta(a_lb_l;a_{l}b_{l}). 
 \end{aligned}
\end{equation}
Plugging this into \eqref{eq:H1} gives, for all $k'\neq l$,
\begin{equation}\label{eq:scdcas}
\begin{aligned}
 H[(v_{k'}\ot\vv_{k'})\ot (v_{l}\ot\vv_{l})] &= 1\\
 & = \chi(0\in \inter(l,k'))+\chi(0\in \inter(k',l)) \\
 &= \Delta(a_{l}b_{l};a_{k'}b_{k'}).
 \end{aligned}
\end{equation}
\subsection{The case $k'=l\neq k=l'$}
We now build a path from $(v_{l}\ot\vv_{l})\ot (v_{k}\ot\vv_{k})$ to $(v_{l}\ot\vv_{k})\ot (v_{k}\ot\vv_{l})$.
By \eqref{eq:pp}, we know that 
 $\varepsilon_i(v_{k}\ot\vv_{k})=\chi(i=k)$ and $\varepsilon_i(v_{k}\ot\vv_{l})=0$ if $i\notin\{l+1,k\}$. Since $k\neq l$, we have for all $i\in \inter(k,l)$ that $(v_l\ot\vv_i)\neq (v_l\ot \vv_{l+1})$, and then 
 $(v_l\ot\vv_i)\xrightarrow[]{i} (v_l\ot \vv_{i-1})$. We obtain the path
\begin{align*}
(v_{l}\ot\vv_{l})\ot (v_{k}\ot\vv_{k})\xrightarrow[]{\,\,\,k\,\,}\underbrace{(v_{l}\ot\vv_{l})\ot (v_{k}\ot\vv_{k-1})\xrightarrow[]{k-1}\cdots \xrightarrow[]{l+1}}_{\text{empty if }l+1=k}(v_{l}\ot\vv_{l})\ot (v_{k}\ot\vv_{l})\\
\Big\downarrow \scriptstyle{l}\qquad\qquad \qquad\\
(v_{l}\ot\vv_{k})\ot (v_{k}\ot\vv_{l})\xleftarrow[]{\,\,k+1\,\,}(v_l\ot\vv_{k+1})\ot (v_k\ot\vv_{l})\underbrace{\xleftarrow[]{k+2}\cdots \xleftarrow[]{l-1}(v_{l}\ot\vv_{l-1})\ot (v_{k}\ot\vv_{l})}_{\text{empty if }l=k+1}\\
\end{align*}
In the upper part of the path, we move forward (i.e. with some $\tilde f_i$) by modifying the right side of the tensor product with arrows in $\inter(l,k)$ appearing once. Then, in the lower part of the path, we move forward by modifying the left side of the tensor product with arrows in $\inter(k,l)$ appearing once. Using that $k\neq l$, the energy function satisfies:
$$
\begin{array}{lll}
 H[(v_{l}\ot\vv_{k})\ot (v_{k}\ot\vv_{l})] &=H[(v_{l}\ot\vv_{l})\ot (v_{k}\ot\vv_{k})]+\chi(0\in \inter(l,k))-\chi(0\in \inter(k,l)) & \text{ by \eqref{eq:enfct}}\\
 &=1+2\chi(0\in \inter(l,k))-1 & \text{ by \eqref{eq:scdcas}}\\
 &= \Delta(a_{l}b_{k};a_{k}b_{l})&  \text{ by \eqref{eq:finfin}}.
\end{array}
$$
\subsection{The case $k'=l$ and $k\neq l'$}
The vertices $(v_{l'}\ot\vv_{l})\ot (v_{l}\ot\vv_{k})$ and  $(v_{k}\ot\vv_{l})\ot (v_{l}\ot\vv_{l'})$ are symmetric.

Since $k\neq l'$, we have that $\inter(k,l)\neq \inter(l',l)$. By symmetry, we can assume that $\inter(l',l)\not\subset \inter(k,l)\subset \inter(l',l),$
so that $l'+1\notin \inter(k,l)$. In that case, we necessarily have $k\neq l$. Then,
$\varphi_l (v_{l'}\ot\vv_{l})= 1 = \varepsilon_l(v_{l}\ot\vv_{l})$ and $\varphi_i (v_{l'}\ot\vv_{l})=0$ for all $i\in \inter(k,l)\setminus \{l\}$. Thus we have the path

\begin{align*}
\underbrace{(v_{l}\ot\vv_{l})\ot (v_{l}\ot\vv_{l})\xleftarrow[]{\,\,\,l\,\,}(v_{l-1}\ot\vv_{l})\ot (v_{l}\ot\vv_{l})\xleftarrow[]{l-1}\cdots \xleftarrow[]{l'+1}}_{\text{empty if }l=l'}
(v_{l'}\ot\vv_{l})\ot (v_{l}\ot\vv_{l})\\
\Big\downarrow {\scriptstyle l}\qquad\qquad \qquad\\
(v_{l'}\ot\vv_{l})\ot (v_{l}\ot\vv_{k})\xleftarrow[]{\,\,\,k+1\,\,\,}\cdots\xleftarrow[]{\,\,\,l+1\,\,\,}(v_{l'}\ot\vv_{l})\ot (v_{l}\ot\vv_{l-1})
\end{align*}
and the energy function is given by
$$
\begin{array}{lll}
 H[(v_{l'}\ot\vv_{l})\ot (v_{l}\ot\vv_{k})] &=\chi(l'\neq l)\chi(0\in \inter(l',l))+\chi(0\in \inter(k,l)) & \text{ by \eqref{eq:enfct}}\\
&=\chi(0\notin \inter(l,l'))+\chi(0\notin \inter(l,k)) & \text{ by Lemma \ref{lem:diffint}}\\
 &= \Delta(a_{k}b_{l};a_{l}b_{l'})  & \text{ by \eqref{eq:finfin}}.
\end{array}
$$
This was the last case where $k'=l$. Also, we have already studied a special case where $k'\neq l$, which was the case $l'=k'\neq l=k$. We now study the other cases where $k'\neq l$. 

\subsection{The case $k'\neq k=l=l'$ (Symmetric case: $l\neq k=k'=l'$)}
Since $l\notin \inter(l,k')$, we have the path
\begin{align*}
(v_{l+1}\ot\vv_{l+1})\ot (v_{l}\ot\vv_{l})\xleftarrow[]{\,l+1\,\,}\underbrace{(v_{l}\ot\vv_{l+1})\ot (v_{l}\ot\vv_{l})\xleftarrow[]{l+2}\cdots \xleftarrow[]{\,\,\,k'\,\,\,}}_{\text{empty if }k'=l+1}
(v_{l}\ot\vv_{k'})\ot (v_{l}\ot\vv_{l})\,.
\end{align*}
Thus the energy function satisfies
$$
\begin{array}{lll}
 H[(v_{l}\ot\vv_{k'})\ot (v_{l}\ot\vv_{l})] &=1+\chi(0\in \inter(l,k')) & \text{ by \eqref{eq:enfct} and \eqref{eq:scdcas}}\\
&=\chi(0\in \inter(l,l))+\chi(0\in \inter(l,k')) & \text{ by Lemma \ref{lem:diffint}}\\
 &= \Delta(a_{l}b_{l};a_{k'}b_{l}) & \text{ by \eqref{eq:finfin}}.
\end{array}
$$
\subsection{The case $l'\neq k'=k\neq l$ (Symmetric case: $k\neq l=l' \neq k'$)}
We first assume that $l'+1\notin \inter(k',l)$. Since $l'\neq k'$, it means that 
\[\inter(l',k')\sqcup \inter(k',l) = \inter(l',l).\]
Since $l'+1$ and $k'$ do not belong to $\inter(k',l),$ we have by \eqref{eq:pp} that $\varphi_i(v_{l'}\ot\vv_{k'})=0$ for all $i\in \inter(k',l).$
This gives the path 
\begin{align*}
(v_{l'+1}\ot\vv_{l'+1})\ot (v_{k'+1}\ot\vv_{k'+1})\xleftarrow[]{\,l'+1\,\,}\underbrace{(v_{l'}\ot\vv_{l'+1})\ot (v_{k'+1}\ot\vv_{k'+1})\xleftarrow[]{l'+2}\cdots \xleftarrow[]{\,\,\,k'\,\,\,}}_{\text{empty if }k'=l'+1}
(v_{l'}\ot\vv_{k'})\ot (v_{k'+1}\ot\vv_{k'+1})\\
\Big\downarrow {\scriptstyle k'+1}\qquad \qquad\\
(v_{l'}\ot\vv_{k'})\ot (v_{l}\ot\vv_{k'})\underbrace{\xleftarrow[]{\,\,\,l\,\,\,}\cdots\xleftarrow[]{\,\,\,k'+2\,\,\,}(v_{l'}\ot\vv_{k'})\ot (v_{k'+1}\ot\vv_{k'})}_{\text{empty if }k'+1=l}\,\cdot
\end{align*}
We deduce the following formula for the energy function:
$$
\begin{array}{lll}
 H[(v_{l'}\ot\vv_{k'})\ot (v_{l}\ot\vv_{k'})] &=1+ \chi(0\in \inter(l',k'))+\chi(0\in \inter(k',l)) & \text{ by \eqref{eq:enfct} and \eqref{eq:scdcas}}\\
&=\chi(0\in \inter(k',k'))+\chi(0\in \inter(l',l)) & \text{ by Lemma \ref{lem:diffint}}\\
 &= \Delta(a_{k'}b_{l};a_{k'}b_{l'})& \text{ by \eqref{eq:finfin}}.
\end{array}
$$
\m Let us now assume that $l'+1\in \inter(k',l)$. Since $\inter(k',l)\neq \emptyset $ and $l'\neq k'$, we necessarily have that
$k'+1\neq l$ and $\inter(k',l')\subset \inter(k',l-1)$, so that $l'\neq l$. Note also that, by \eqref{eq:pp},  
$$\varphi_{k'}(v_{l'}\ot\vv_{k'-1})=0=\varepsilon_{k'}(v_{k'-1}\ot\vv_{k'}),$$ since $k'\neq l'+1$,
and $\varphi_{i}(v_{l'}\ot\vv_{k'})=0$ for all $i\in \inter(l,k')\setminus\{k'\}$.
We then have the path
\begin{align*}
(v_{k'}\ot\vv_{k'-1})\ot (v_{k'}\ot\vv_{k'-1})\underbrace{\xleftarrow[]{\,k'\,\,}}_{\bullet}\underbrace{(v_{k'}\ot\vv_{k'-1})\ot (v_{k'}\ot\vv_{k'})\xrightarrow[]{k'+1}\cdots \xrightarrow[]{\,\,\,l'\,\,\,}}_{\text{nonempty since }k'\neq l'+1}
(v_{l'}\ot\vv_{k'-1})\ot (v_{k'}\ot\vv_{k'})\\
\underbrace{\Big\uparrow {\scriptstyle k'}}_{\star}
\qquad \qquad\\
\underbrace{(v_{l'}\ot\vv_{k'})\ot (v_{l}\ot\vv_{k'})\xrightarrow[]{\,\,\,\,l+1\,\,\,\,}\cdots\xrightarrow[]{\,\,k'-1\,\,}}_{\bullet}(v_{l'}\ot\vv_{k'})\ot (v_{k'-1}\ot\vv_{k'})\underbrace{\xrightarrow[]{\,\,\,k'\,\,\,}}_{\star}(v_{l'}\ot\vv_{k'-1})\ot (v_{k'-1}\ot\vv_{k'})\,\cdot
\end{align*}
By the previous case ($l'\neq k'=k\neq l$), we obtain the energy function
\begin{equation}
\label{eq:Hdifferent}
\begin{aligned}
H[(v_{k'}\ot\vv_{k'-1})\ot (v_{k'}\ot\vv_{k'-1})] &= \chi(0\in \inter(k',k'))+\chi(0\in \inter(k'-1,k'-1))\\
&=2\chi(0\in \inter(k',k')).
\end{aligned}
\end{equation}
In the computation of $H$, by \eqref{eq:enfct}, the moves marked by $\star$  cancel each other, since it is the same arrow that operates backward (i.e. by some $\tilde{e}_i$) consecutively on the right and on the left side of the tensor product. 
Besides, the moves marked by $\bullet$ give $\inter(l,k')$ and operate backward on the right side of the tensor product. As a consequence,
$$
\begin{array}{lll}
 H[(v_{l'}\ot\vv_{k'})\ot (v_{l}\ot\vv_{k'})] &=H[(v_{k'}\ot\vv_{k'-1})\ot (v_{k'}\ot\vv_{k'-1})]-\chi(0\in \inter(k',l'))- \chi(0\in \inter(l,k')) &\text{ by \eqref{eq:enfct}}\\
 &=2\chi(0\in \inter(k',k'))-\chi(0\in \inter(k',l'))- \chi(0\in \inter(l,k')) &\text{ by \eqref{eq:Hdifferent}}\\
 &= \chi(0\in \inter(k',k'))+ \chi(0\in \inter(k',l))-\chi(0\in \inter(k',l'))\\
 &=\chi(0\in \inter(k',k'))+\chi(0\in \inter(l',l)) &\text{ by Lemma \ref{lem:diffint}}\\
 &= \Delta(a_{k'}b_{l};a_{k'}b_{l'}) & \text{ by \eqref{eq:finfin}}.
\end{array}
$$

\subsection{The case $k\neq k'$, $k'\neq l$ and $l\neq l'$}
\subsubsection{The sub-case $k+1,k'\notin \inter(l,l')$ (Symmetric case : $l'+1 ,l\notin \inter(k',k)$)}
We have $l'+1,k'\notin \inter(l,l')$, so that $\varphi_i(v_{l'}\ot\vv_{k'})=0$ for all $i\in \inter(l,l')$. Besides, $k+1\notin \inter(l,l')$, so that 
$\tilde e_i(v_{i}\ot\vv_{k})=(v_{i-1}\ot\vv_{k})$. We obtain the path
\begin{align*}
(v_{l'}\ot\vv_{k'})\ot (v_{l'}\ot\vv_{k})\xleftarrow[]{\,\,\,l'\,\,}\cdots \xleftarrow[]{\,\,\,l+1\,\,\,}(v_{l'}\ot\vv_{k'})\ot (v_{l}\ot\vv_{k}).
\end{align*}
By Case 7.4 and the symmetric of Case 7.5, we have
\begin{equation}
\label{eq:regarde}
H[(v_{l'}\ot\vv_{k'})\ot (v_{l'}\ot\vv_{k})]=\chi(0\in \inter(k,k'))+\chi(0\in \inter(l',l')),
\end{equation}
and the energy function becomes
$$
\begin{array}{lll}
 H[(v_{l'}\ot\vv_{k'})\ot (v_{l}\ot\vv_{k})] &= H[(v_{l'}\ot\vv_{k'})\ot (v_{l'}\ot\vv_{k})]-\chi(0\in \inter(l,l')) &\text{ by \eqref{eq:enfct}}\\
 &=\chi(0\in \inter(k,k'))+\chi(0\in \inter(l',l'))-\chi(0\in \inter(l,l')) &\text{ by \eqref{eq:regarde}}\\
 &=\chi(0\in \inter(k,k'))+\chi(0\in \inter(l',l))  &\text{ by Lemma \ref{lem:diffint}}\\
 &= \Delta(a_{k}b_{l};a_{k'}b_{l'})& \text{ by \eqref{eq:finfin}}.
\end{array}
$$

\subsubsection{The sub-case $k+1\in \inter(l,l')$ and $k'\notin \inter(l,l')$ (Symmetric case: $l'+1 \in \inter(k',k)$ and $l\notin \inter(k',k)$)}
This case is very similar to the previous one. 
We use the following path:
\begin{align*}
\underbrace{(v_{l'}\ot\vv_{k'})\ot (v_{l'}\ot\vv_{k})\xleftarrow[]{\,\,\,l'\,\,}\cdots \xleftarrow[]{\,\,\,k+2\,\,\,}(v_{l'}\ot\vv_{k'})\ot (v_{k+1}\ot\vv_{k})}_{\star}
\underbrace{\xrightarrow[]{\,\,\,k\,\,\,}}_{\bullet}(v_{l'}\ot\vv_{k'})\ot (v_{k+1}\ot\vv_{k-1})\\
\underbrace{\Big\uparrow {\scriptstyle k+1}}_{\star}\\
\underbrace{(v_{l'}\ot\vv_{k'})\ot (v_{l'}\ot\vv_{k})\xrightarrow[]{\,\,\,l+1\,\,}\cdots \xrightarrow[]{\,\,\,k\,\,\,}(v_{l'}\ot\vv_{k'})\ot (v_{k}\ot\vv_{k})}_{\star}\underbrace{\xrightarrow[]{\,\,\,k\,\,\,}}_{\bullet}(v_{l'}\ot\vv_{k'})\ot (v_{k}\ot\vv_{k-1})
\end{align*}
Note that the moves marked by $\bullet$ cancel each other, and the moves marked by $\star$ give $\inter(l,l')$, so that the calculation is the same as in the previous case.
\subsubsection{The sub-case $k+1\notin \inter(l,l')$ and $k'\in \inter(l,l')$ (Symmetric case: $l'+1 \notin \inter(k',k)$ and $l\in \inter(k',k)$)}
We have $l,k+1\notin \inter(l,l')$, so that $\varepsilon_i(v_{l}\ot\vv_{k})=0$ for all $i\in \inter(l,l')$. Note that $k'+1\in \inter(l,l')$, since $k'\in \inter(l,l')$ and $k'\neq l'$. This gives the path
\begin{align*}
\underbrace{(v_{l}\ot\vv_{k'})\ot (v_{l}\ot\vv_{k})\xrightarrow[]{\,\,\,l+1\,\,}\cdots \xrightarrow[]{\,\,\,k'\,\,\,}(v_{k'}\ot\vv_{k'})\ot (v_{l}\ot\vv_{k})}_{\star}
\underbrace{\xrightarrow[]{\,\,\,k'\,\,\,}}_{\bullet}(v_{k'}\ot\vv_{k'-1})\ot (v_{l}\ot\vv_{k})\\
\underbrace{\Big\downarrow {\scriptstyle k'+1}}_{\star}\\
\underbrace{(v_{l'}\ot\vv_{k'})\ot (v_{l}\ot\vv_{k})\xleftarrow[]{\,\,\,l'\,\,}\cdots \xleftarrow[]{\,\,\,k'+2\,\,\,}(v_{k'+1}\ot\vv_{k'})\ot (v_{l}\ot\vv_{k})}_{\star}\underbrace{\xrightarrow[]{\,\,\,k'\,\,\,}}_{\bullet}(v_{k'+1}\ot\vv_{k'-1})\ot (v_{l}\ot\vv_{k})
\end{align*}
As before, the moves marked by $\bullet$ cancel each other, and the moves $\star$ give $\inter(l,l')$. We move with the $\tilde f_i$'s by changing the left side of the tensor product, and we get
$$
\begin{array}{lll}
 H[(v_{l'}\ot\vv_{k'})\ot (v_{l}\ot\vv_{k})] &= H[(v_{l}\ot\vv_{k'})\ot (v_{l}\ot\vv_{k})]-\chi(0\in \inter(l,l')) &\text{ by \eqref{eq:enfct}}\\
 &=\chi(0\in \inter(k,k'))+\chi(0\in \inter(l,l))-\chi(0\in \inter(l,l')) &\text{ by \eqref{eq:regarde}}\\
 &=\chi(0\in \inter(k,k'))+\chi(0\in \inter(l',l)) &\text{ by Lemma \ref{lem:diffint}}\\
 &= \Delta(a_{k}b_{l};a_{k'}b_{l'}) & \text{ by \eqref{eq:finfin}}.
\end{array}
$$
\subsubsection{The sub-case $k+1,k'\in \inter(l,l')$ and $l'+1,l\in \inter(k',k)$}
Note that this case overlaps with the case $k'=l'\neq k=l$ that we already checked in the first part.
Omitting that case, we can assume by symmetry that $k\neq l$.
We obtain the path
\begin{align*}
\underbrace{(v_{l'}\ot\vv_{l'})\ot (v_{k}\ot\vv_{k})\xrightarrow[]{\,\,\,l'\,\,}\cdots \xrightarrow[]{\,k'+1\,}}_{\text{empty if }k'=l'}(v_{l'}\ot\vv_{k'})\ot (v_{k}\ot\vv_{k})
\xleftarrow[]{\,\,\,k\,\,\,}\cdots\xleftarrow[]{\,\,\,l+1\,\,\,}(v_{l'}\ot\vv_{k'})\ot (v_{l}\ot\vv_{k})\,\cdot
\end{align*}
Since $k\neq l$, the fact that $l\in \inter(k',k)$  implies that $ \inter(k',k)= \inter(k',l)\sqcup \inter(l,k)$, and the fact that 
$k+1\in \inter(l,l')$  implies that $\inter(l,l')= \inter(l,k)\sqcup \inter(k,l'),$ so that $k',l'+1\notin \inter(l,k)$.
Also, if $k'\neq l'$, then $l'+1\in int (k',k)$ implies that $\inter(k',k)=\inter(k',l')\sqcup \inter(l',k)$, so that $k\notin \inter(k',l')$.
Since $l\neq l'$ and $k'\neq l$, the fact that $k'\in \inter(l,l')$ implies that 
\[\inter(l',k')=\inter(l',l)\sqcup \inter(l,k'),\]
and 
the fact that $l\in \inter(k',k)$ and $l\neq k$ implies that 
\[\inter(l,k')=\inter(l,k)\sqcup \inter(k,k').\]
Thus the computation of $H$ gives
$$
\begin{array}{lll}
 H[(v_{l'}\ot\vv_{k'})\ot (v_{l}\ot\vv_{k})] &= 1-\chi(k'\neq l')\chi(0\in \inter(k',l'))-\chi(0\in \inter(l,k))  & \text{ by \eqref{eq:enfct} and \eqref{eq:scdcas}}\\
 &=1-\chi(0\notin \inter(l',k'))-\chi(0\in \inter(l,k))  & \text{ by Lemma \ref{lem:diffint}}\\
 &=\chi(0\in \inter(l',k'))-\chi(0\in \inter(l,k))\\
  &=\chi(0\in \inter(l',l))+\chi(0\in \inter(l,k'))-\chi(0\in \inter(l,k))\\
   &=\chi(0\in \inter(l',l))+\chi(0\in \inter(k,k'))\\
  &=\Delta(a_kb_l;a_{k'}b_{l'})& \text{ by \eqref{eq:finfin}}.
\end{array}
$$

\medskip
We have checked all the possible choices of $k,l,k',l'$. Our proof of \Thm{theorem:perfcrys} is thus complete.

%
%
%
%
%
%
%
%
%
%
%
%
%

\section*{Acknowledgements}
The research of the first author was supported by the project IMPULSION of IdexLyon. Part of this research was conducted while the second author was visiting Lyon, funded by the same project.

The authors would like to thank Leonard Hardiman, Travis Scrimshaw, and Ole Warnaar for their helpful comments on an earlier version of this paper and for suggesting important references.

\bibliographystyle{alpha}
\bibliography{biblio}

\end{document}